
\documentclass[a4paper,fleqn]{cas-dc}
\usepackage{moreverb}
\usepackage{amsthm}
\usepackage{amssymb}

 {\usepackage{color}}{}
\newtheorem{proposition}{Proposition}[section]
\newtheorem{theorem}{Theorem}[section]

\newtheorem{corollary}{Corollary}[section]
\newtheorem{definition}{Definition}[section]
\usepackage{subcaption}
\newcommand\BibTeX{{\rmfamily B\kern-.05em \textsc{i\kern-.025em b}\kern-.08em
T\kern-.1667em\lower.7ex\hbox{E}\kern-.125emX}}
\usepackage{algorithm}
\usepackage[algo2e]{algorithm2e}
\usepackage{adjustbox}
\received{<day> <Month>, <year>}
\revised{<day> <Month>, <year>}
\accepted{<day> <Month>, <year>}

\DeclareMathOperator*{\argmin}{arg\,min}

\newcommand{\revise}[1]{\textcolor{black}{#1}}


\usepackage[authoryear,longnamesfirst]{natbib}
\usepackage{rotating}
\usepackage{natbib}
\def\tsc#1{\csdef{#1}{\textsc{\lowercase{#1}}\xspace}}
\tsc{WGM}
\tsc{QE}

\usepackage{hyperref}

\begin{document}
\let\WriteBookmarks\relax
\def\floatpagepagefraction{1}
\def\textpagefraction{.001}
        
\shorttitle{A Framework for Controllable PFL with Completed Scalarization Functions and its Applications}    

\shortauthors{Tran Anh Tuan et al.}  

\title [mode = title]{A Framework for Controllable Pareto Front Learning with Completed Scalarization Functions and its Applications}  
\author[1]{Tran Anh Tuan}[type=editor,
orcid=0000-0001-6287-0173]
\cormark[1]
\ead{tuan.ta181295@sis.hust.edu.vn}
\affiliation[1]{organization={School of Applied Mathematics and Informatics, Hanoi University of Science and Technology},
            city={Ha Noi},
            country={Viet Nam}}

\affiliation[2]{organization={College of Engineering and Computer Science, VinUniversity},
            city={Ha Noi},
            country={Viet Nam}}
\author[2]{Long P. Hoang}[]
\cormark[1]
\ead{long.hp@vinuni.edu.vn}
\author[2]{Dung D. Le}[]
\ead{dung.ld@vinuni.edu.vn}
\author[1]{Tran Ngoc Thang}[]
\cormark[2]
\ead{thang.tranngoc@hust.edu.vn}
\cortext[1]{Equal Contribution and Co-first Author}
\cortext[2]{Corresponding Author}
\begin{abstract}
Pareto Front Learning (PFL) was recently introduced as an efficient method for approximating the entire Pareto front, the set of all optimal solutions to a Multi-Objective Optimization (MOO) problem. In the previous work, the mapping between a preference vector and a Pareto optimal solution is still ambiguous, rendering its results. This study demonstrates the convergence and completion aspects of solving MOO with pseudoconvex scalarization functions and combines them into Hypernetwork in order to offer a comprehensive framework for PFL, called Controllable Pareto Front Learning. Extensive experiments demonstrate that our approach is highly accurate and significantly less computationally expensive than \revise{prior methods in term of inference time.} 
\end{abstract}

\begin{graphicalabstract}
\centering
\includegraphics[width=17.5cm]{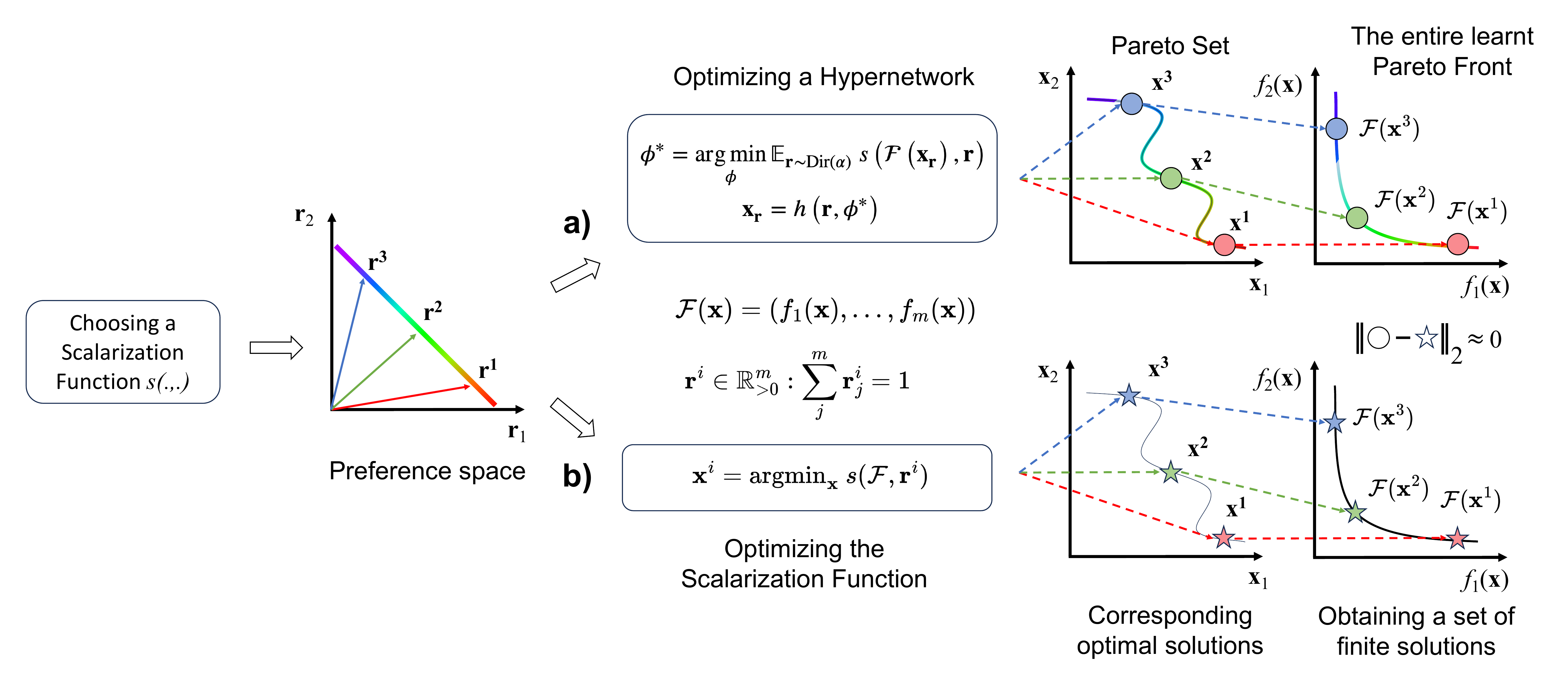}
\textbf{Graphical Abstract (Demo for Two-Objective Optimization):} Controllable Pareto Front Learning (PFL) achieves the approximation of the entire Pareto Front by employing Completed Scalarization functions (Sub-figure (a)). Instead of directly optimizing Completed Scalarization functions to obtain a finite set of solutions (Sub-figure (b)), Controllable PFL aims at learning a mapping between a preference vector $r_i$ and its corresponding optimal solution $x_i^{*}$.
\end{graphicalabstract}

\begin{highlights}
\item Controllable Pareto Front Learning is based on Completed Scalarization Functions optimization problem
\item A theoretical basis for how the Hypernetwork framework may give a precise mapping between a preference vector and the corresponding Pareto optimal solution
\item The framework makes a computational cost which is significantly less than prior methods in large scale Multi Task Learning
\end{highlights}
\begin{keywords}
Multi-objective optimization \sep Multi-task learning \sep Pareto front learning \sep Scalarization problem \sep Hypernetwork
\end{keywords}


\maketitle

\section{Introduction}
\begin{figure*}[!htb]
    \centering
    \includegraphics[width=1.\textwidth]{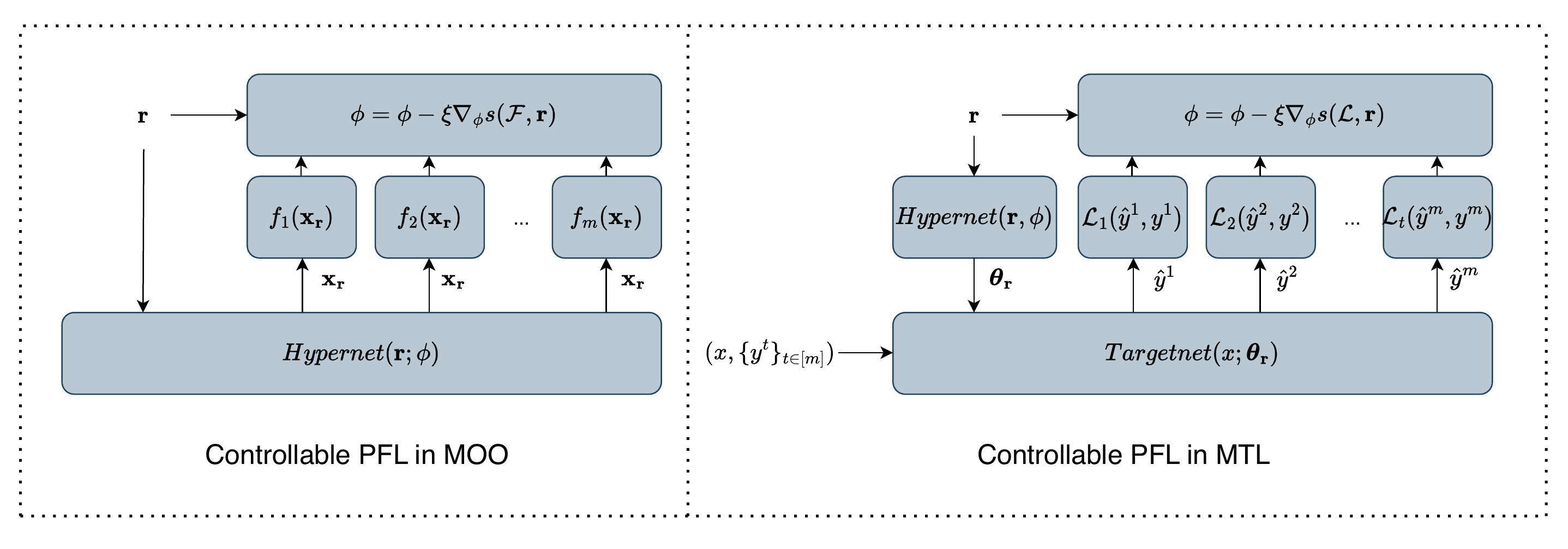}
    \caption{\revise{Controllable Pareto Front Learning framework with Completed Scalarization Functions by reference vector $r$ in MOO and MTL settings. In MOO setting, Controllable PFL uses a Hypernetwork to generate corresponding value for $x_{\mathbf{r}}$ according to a preference vector $\mathbf{r}$. On the other hand, in MTL setting, Controllable PFL use a Hypernetwork to generate the weight $\theta_{\mathbf{r}}$ for a Target Network, which are Deep Learning models. The only trainable parameters of our framework are the parameters $\phi$ of the Hypernetwork.}}
\end{figure*}
Multi-objective optimization, which considers simultaneously related objectives by sharing optimized variables, plays a crucial role in many domains such as chemistry \citep{cao2019multi}, biology \citep{lambrinidis2021multi}, \revise{optimal power flow \citep{dilip2018optimal, premkumar2021many}}, and especially Multi-Task Learning (MTL) \citep{sener2018multi}, an important area of Deep Learning. Due to the trade-off between conflicting objectives, it is impossible to discover the optimal solution for each objective separately; hence, addressing a MOO problem often involves finding a portion or all of the Pareto or weakly Pareto set.  Genetic algorithms \citep{murugan2009nsga} and \revise{evolutionary algorithms \citep{jangir2021elitist}} are plausible options for providing a good approximation of the Pareto front, however, this approach often suffers when dealing with large-scale problems. Another approach, Multi-gradient descent \citep{desideri2012multiple}, one of the steepest methods to obtain a Pareto stationary solution, identifies a common descent direction for objectives at each iteration. However, while using these two approaches, the users are unable to control the optimal solutions by specifying their preferred objectives.

Therefore, optimization in the Pareto set with an extra criterion function is a useful strategy for maximizing user satisfaction. \revise{\citep{thang2015outcome, thang2016solving, thang2016outcome, vuong2023optimizing} proposed approaches for optimization over the efficient solution set, typically focusing on algorithms to find an optimal solution to a single objective and apply those hypotheses in real applications such as portfolio selection.} Most modern MOO algorithms \citep{lin2019pareto, mahapatra2021exact, pmlr-v162-momma22a}  must choose the preference vector as the extra criterion function in advance. Unfortunately, these approaches restrict flexibility because, in real-time, the decision-maker cannot change their priorities freely because the corresponding solutions are not always readily available and need to be optimized from scratch. As a result, \citep{lin2020controllable, navon2020learning, hoang2022improving} have recently investigated and created a research methodology known as Pareto Front Learning (PFL), which tries to approximate the whole Pareto front using Hypernetwork \citep{hypernetwork, chauhan2023brief}. 

Hypernetwork generate parameters for other networks (target networks). One of the first works of PFL, Controllable Pareto MTL \citep{lin2020controllable} is ineffective since it does not provide a mapping between a preference vector and a corresponding efficient solution, i.e. surjection. Other pioneers of PFL, \citep{navon2020learning}, proposed two Pareto Hypernetwork, PHN-LS and PHN-EPO, based on Linear Scalarization and EPO solver \citep{mahapatra2021exact}. As a development of the previous work, Multi-Sample Hypernetwork \citep{hoang2022improving}, the current state-of-the-art PFL, utilizes the dynamic relationship between solutions by maximizing hypervolume in combination with a penalty function. However, no convergence proofs were provided. Building upon the previous work, our study develops a novel framework named Controllable Pareto Front Learning, a complete version of PFL with mathematical explanations. Our key contributions include the following:
\begin{itemize}
   \item Firstly, we establish a theoretical basis for how the proposed framework may give a precise mapping between a preference vector and the corresponding Pareto optimal solution.
   \item Secondly, we presented a Framework for Pareto Front Learning with Completed Scalarization Functions which is built on the previous theoretical basis. 
   \item Thirdly,  we applied our framework to a wide range of problems from MOO to large-scale Multi-Task Learning. The computational results show that the mapping approximated the entire Pareto Front is sufficiently precise, and the computational cost is significantly less than \revise{training prior methods on multiple preference vectors}.

\end{itemize}

Our paper is structured as follows: Section 2 provides background knowledge for MOO. Section 3 presents a solving method and convergence proof for the vector optimization problem. Section 4 describes Controllable Pareto Front Learning, which approximates the entire Pareto front and maps a preference vector to a solution on the Pareto front. Section 5 reviews the completed scalarization functions that generate Pareto optimal solutions. In section 6, we apply the proposed techniques to Multi-task learning, and in Section 7, we conduct experiments on MOO and Multi-task learning problems. The final section discusses some conclusions and future works.
\section{Preliminaries}
MOO aims to find $\mathbf{x}^* \in X$ to optimize $m$ objective functions:
\begin{align*}\tag{MOP}\label{MOP}
    \mathbf{x}^* = \underset{\mathbf{x}\in X}{\text{arg\,min}} \mathcal{F}(\mathbf{x}),
\end{align*}
where $\mathcal{F}(\cdot): X\to Y\subset\mathbb{R}^m, \mathcal{F}(\mathbf{x}) = \{f_1\left(\mathbf{x}\right),\dots, f_m\left(\mathbf{x}\right) \}$, $X\subset\mathbb{R}^{n}$ is nonempty convex set, and objective functions $f_i(\cdot): \mathbb{R}^n \rightarrow \mathbb{R}$, $i= 1,\dots,m$ are convex functions on $X$.

\theoremstyle{definition}
\begin{definition}[Dominance] A solution $\mathbf{x}^a$ dominates $\mathbf{x}^b$ if $f_j\left(\mathbf{x}^a\right) \leq f_j\left(\mathbf{x}^b\right), \forall j$ and $f_j\left(\mathbf{x}^a\right) \neq f_j\left(\mathbf{x}^b\right)$. Denote $\mathcal{F}\left(\mathbf{x}^a\right) \prec \mathcal{F}\left(\mathbf{x}^b\right)$.
\end{definition}

\begin{definition}[Pareto optimal solution] A solution $\mathbf{x}^a$ is called Pareto optimal solution (efficient solution) if $\nexists \mathbf{x}^b: \mathcal{F}\left(\mathbf{x}^b\right) \preceq \mathcal{F}\left(\mathbf{x}^a\right)$.
\end{definition}

\begin{definition}[Weakly Pareto optimal solution] A solution $\mathbf{x}^a$ is called weakly Pareto optimal solution (weakly efficient solution) if $\nexists \mathbf{x}^b: \mathcal{F}\left(\mathbf{x}^b\right) \prec \mathcal{F}\left(\mathbf{x}^a\right)$.
\end{definition}

\begin{definition}[Pareto stationary] A point $\mathbf{x}^{*}$ is called Pareto stationary (Pareto critical point) if $\nexists d \in X:\left\langle J \mathcal{F}\left(\mathbf{x}^{*}\right), d\right\rangle<0$ or $\forall d \in X:\left\langle J \mathcal{F}\left(\mathbf{x}^{*}\right), d\right\rangle \nless 0$, corresponding:
        \begin{align*}
            \max _{i=1, \ldots, m} \nabla f_{i}\left(\mathbf{x}^{*}\right)^{\top} d \geq 0, \quad \forall d \in X,
        \end{align*}
where $J \mathcal{F}\left(\mathbf{x}^{*}\right) = \left[\nabla f_{1}(\mathbf{x}^{*})^{T},\dots,\nabla f_{m}(\mathbf{x}^{*})^{T}\right]^T$ is Jacobian matrix of $\mathcal{F}$ at $\mathbf{x}^{*}$.
\label{def:paretostationary}
\end{definition}

\theoremstyle{definition}
\begin{definition}[Pareto front] The set of Pareto optimal solutions is called Pareto set, denoted by $X_E$, and the corresponding images in objectives space are Pareto front $PF_E=PF(X_E)$.
\end{definition}
\begin{proposition}
    $\mathbf{x}^*$ is Pareto optimal solution to Problem \eqref{MOP} $\Leftrightarrow \mathbf{x}^*$ is pareto stationary point.
\end{proposition}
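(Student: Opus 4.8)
The plan is to prove the two implications separately, each by contraposition, by translating the order-theoretic notion of dominance into a statement about feasible descent directions and then exploiting the convexity of $X$ and of the $f_{i}$. Two elementary facts carry the argument. \emph{(i)} Since $X$ is convex, for every $\mathbf{y}\in X$ the whole segment $\mathbf{x}^{*}+t(\mathbf{y}-\mathbf{x}^{*})$, $t\in[0,1]$, lies in $X$; hence $d=\mathbf{y}-\mathbf{x}^{*}$ is an admissible direction in Definition~\ref{def:paretostationary}, and multiplying such a $d$ by a positive scalar changes the sign of no $\nabla f_{i}(\mathbf{x}^{*})^{\top}d$. \emph{(ii)} Convexity of each $f_{i}$ makes the difference quotient $t\mapsto\big(f_{i}(\mathbf{x}^{*}+td)-f_{i}(\mathbf{x}^{*})\big)/t$ nondecreasing on $(0,1]$, so that $\nabla f_{i}(\mathbf{x}^{*})^{\top}d\le f_{i}(\mathbf{x}^{*}+d)-f_{i}(\mathbf{x}^{*})$.

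For ``Pareto optimal $\Rightarrow$ Pareto stationary'' I would argue contrapositively: if $\mathbf{x}^{*}$ is not Pareto stationary, Definition~\ref{def:paretostationary} yields an admissible $d$ with $\nabla f_{i}(\mathbf{x}^{*})^{\top}d<0$ for every $i$, and by \emph{(i)} we may rescale so that $\mathbf{x}^{*}+d\in X$. The expansion $f_{i}(\mathbf{x}^{*}+td)=f_{i}(\mathbf{x}^{*})+t\,\nabla f_{i}(\mathbf{x}^{*})^{\top}d+o(t)$ then gives some $\bar t\in(0,1]$ with $f_{i}(\mathbf{x}^{*}+\bar t\,d)<f_{i}(\mathbf{x}^{*})$ for all $i$ at once, so $\mathbf{x}^{*}+\bar t\,d\in X$ dominates $\mathbf{x}^{*}$ and $\mathbf{x}^{*}$ is not Pareto optimal. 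This direction uses only differentiability of the $f_{i}$ and convexity of $X$.

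For the converse, suppose $\mathbf{x}^{*}$ is not Pareto optimal, so some $\mathbf{x}^{b}\in X$ has $f_{i}(\mathbf{x}^{b})\le f_{i}(\mathbf{x}^{*})$ for all $i$ and $f_{k}(\mathbf{x}^{b})<f_{k}(\mathbf{x}^{*})$ for some $k$. Taking the admissible direction $d=\mathbf{x}^{b}-\mathbf{x}^{*}$ and using \emph{(ii)} gives $\nabla f_{i}(\mathbf{x}^{*})^{\top}d\le f_{i}(\mathbf{x}^{b})-f_{i}(\mathbf{x}^{*})\le 0$ for every $i$. The hard part will be that this bound is \emph{strict} only at the index $k$: by itself $d$ contradicts only a weak form of stationarity, and for a merely convex problem a Pareto stationary point need in general only be \emph{weakly} Pareto optimal. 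I would therefore first establish the clean equivalence ``Pareto stationary $\Leftrightarrow$ weakly Pareto optimal'' — its nontrivial half being the first-order argument of the previous paragraph run with all $f_{i}$ strictly decreasing — and then recover full Pareto optimality by the midpoint device: if $\mathbf{x}^{b}$ weakly dominates $\mathbf{x}^{*}$ then $(\mathbf{x}^{*}+\mathbf{x}^{b})/2\in X$ strictly dominates $\mathbf{x}^{*}$ provided the $f_{i}$ are strictly convex, contradicting weak optimality. Thus the statement goes through once one is careful about strictness (or reads ``Pareto optimal'' in the weak sense, as is customary for this characterization); pinning down the exact convexity needed in that last step is what I would scrutinize most.
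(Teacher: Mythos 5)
The paper states this proposition without proof, so there is no argument of the authors' to compare yours against; your plan is the standard one, and your suspicion about the converse is justified. The forward implication as you set it up is fine: non-stationarity gives a common descent direction $d$ with $\nabla f_i(\mathbf{x}^*)^\top d<0$ for all $i$, a first-order expansion produces a feasible point on the segment that strictly decreases every objective simultaneously, and the contrapositive yields Pareto optimality $\Rightarrow$ stationarity, using only differentiability and convexity of $X$. For the converse, the gradient inequality $\nabla f_i(\mathbf{x}^*)^\top(\mathbf{x}^b-\mathbf{x}^*)\le f_i(\mathbf{x}^b)-f_i(\mathbf{x}^*)$ indeed only delivers ``stationary $\Rightarrow$ weakly Pareto optimal,'' and under the hypotheses actually stated in the paper (the $f_i$ merely convex) the proposition is false as written: take $X=[0,1]$, $f_1(x)=x$, $f_2(x)=0$; then $\mathbf{x}^*=1$ is Pareto stationary, since $\nabla f_2(\mathbf{x}^*)^\top d=0$ forces $\max_i\nabla f_i(\mathbf{x}^*)^\top d\ge 0$ for every admissible $d$, yet $x=0$ dominates it. So the equivalence requires either reading ``Pareto optimal'' as ``weakly Pareto optimal'' or an extra strictness hypothesis, and your midpoint device $(\mathbf{x}^*+\mathbf{x}^b)/2$ under strict convexity is the standard and correct way to close that gap. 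In short, your proposal is sound wherever the statement is provable, and it correctly isolates the one point at which the proposition, as the paper phrases it, needs repair rather than proof.
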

\begin{definition}\citep{dinh2005generalized}
    A function $\varphi$ is specified on convex set $X\subset\mathbb{R}^n$, which is called:
\begin{enumerate}
    \item nondecreasing on $X$ if $\mathbf{x}\succeq \mathbf{y}$ then $\varphi(\mathbf{x})\ge \varphi(\mathbf{y}),$ $\forall \mathbf{x},\mathbf{y}\in X$.
    \item weakly increasing on $X$ if $\mathbf{x}\succ\mathbf{y}$ then $\varphi(\mathbf{x})\ge\varphi(\mathbf{y}),$ $\forall \mathbf{x},\mathbf{y}\in X$.
    \item monotonically increasing on $X$ if $\mathbf{x}\succ \mathbf{y}$ then $\varphi(\mathbf{x})>\varphi(\mathbf{y}),$ $\forall \mathbf{x},\mathbf{y}\in X$.
\end{enumerate}
\end{definition}

\begin{definition}\citep{mangasarian1994nonlinear}
	The function $f:\mathbb{R}^m \to \mathbb R$  is said to be 
	\begin{itemize}
		\item[$\bullet$] 		 convex on $ X $ if for all $ \mathbf{x},\mathbf{y}\in X $, $ \lambda \in [0,1] $, it holds that
		\begin{equation*}
			f(\lambda \mathbf{x} +(1-\lambda)\mathbf{y})\leq \lambda f(\mathbf{x})+(1-\lambda)f(\mathbf{y}).
		\end{equation*}
		\item[$\bullet$] 		pseudoconvex on $ X $ if for all $ \mathbf{x},\mathbf{y}\in X $, it holds that
		\begin{equation*}
			\langle \nabla f(\mathbf{x}), \mathbf{y}-\mathbf{x} \rangle \geq 0 \Rightarrow f(\mathbf{y})\geq f(\mathbf{x}).
		\end{equation*}
		\item[$\bullet$]  quasiconvex on $ X $ if for all $ \mathbf{x},\mathbf{y}\in X $, $ \lambda \in [0;1] $, it holds that
		\begin{equation*}
			f(\lambda \mathbf{x} +(1-\lambda)\mathbf{y})\leq \max \left\{f(\mathbf{x});f(\mathbf{y}) \right\}.
		\end{equation*}
	\end{itemize}
\end{definition}
    \begin{proposition}\citep{dennis1996numerical}
	The differentiable function $ f $ is quasiconvex on $ X $ if and only if 
	\[ 
	f(\mathbf{y})\leq f(\mathbf{x}) \Rightarrow 		\langle \nabla f(\mathbf{x}), \mathbf{y}-\mathbf{x} \rangle \leq 0.
	\] 
\end{proposition}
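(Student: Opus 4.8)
The plan is to prove the two implications separately, in each case passing to the one–variable restriction $g(t):=f(\mathbf{x}_t)$ along the segment, where $\mathbf{x}_t:=\mathbf{x}+t(\mathbf{y}-\mathbf{x})$ for $t\in[0,1]$. Since $f$ is differentiable and $t\mapsto\mathbf{x}_t$ is affine, $g$ is differentiable with $g'(t)=\langle\nabla f(\mathbf{x}_t),\mathbf{y}-\mathbf{x}\rangle$; in particular $g$ is continuous, which I will use freely.

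For the ``only if'' direction I would assume $f$ quasiconvex and $f(\mathbf{y})\le f(\mathbf{x})$. Quasiconvexity immediately gives $g(t)\le\max\{f(\mathbf{x}),f(\mathbf{y})\}=f(\mathbf{x})=g(0)$ for all $t\in[0,1]$, so every difference quotient $(g(t)-g(0))/t$ with $t\in(0,1]$ is nonpositive; letting $t\to0^{+}$ yields $g'(0)\le 0$, i.e.\ $\langle\nabla f(\mathbf{x}),\mathbf{y}-\mathbf{x}\rangle\le0$. This part is routine.

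For the ``if'' direction I would argue by contradiction, first rewriting the hypothesis in contrapositive form: $\langle\nabla f(\mathbf{b}),\mathbf{a}-\mathbf{b}\rangle>0\Rightarrow f(\mathbf{a})>f(\mathbf{b})$. If $f$ were not quasiconvex there would be points $\mathbf{x},\mathbf{y}\in X$ --- without loss of generality with $f(\mathbf{y})\le f(\mathbf{x})$ --- and $\bar t\in(0,1)$ with $g(\bar t)>f(\mathbf{x})=g(0)$. The key step is the choice of the evaluation point: set $t_1:=\sup\{t\in[0,\bar t]:g(t)\le g(0)\}$, which by continuity of $g$ satisfies $t_1<\bar t$, $g(t_1)=g(0)$, and $g(t)>g(0)$ for every $t\in(t_1,\bar t]$. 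Applying the mean value theorem on $[t_1,\bar t]$ produces $\xi\in(t_1,\bar t)$ with $g'(\xi)=(g(\bar t)-g(t_1))/(\bar t-t_1)>0$. Writing $\mathbf{u}:=\mathbf{x}_\xi$ and using $\mathbf{y}-\mathbf{u}=(1-\xi)(\mathbf{y}-\mathbf{x})$ with $1-\xi>0$, I obtain $\langle\nabla f(\mathbf{u}),\mathbf{y}-\mathbf{u}\rangle>0$, so the contrapositive hypothesis forces $f(\mathbf{y})>f(\mathbf{u})=g(\xi)>g(0)=f(\mathbf{x})$, contradicting $f(\mathbf{y})\le f(\mathbf{x})$.

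The main obstacle, and essentially the only nontrivial point, is the choice of $\mathbf{u}$ in the ``if'' direction. The naive candidate --- a maximizer of $g$ on $[0,\bar t]$ --- is useless because $g'$ vanishes there and the hypothesis yields nothing; the remedy is to move to the first parameter $t_1$ at which $g$ returns to the level $g(0)$ and then invoke the mean value theorem just to its right, so that $g'(\xi)>0$ while $g(\xi)$ still lies above $g(0)$. Everything else --- the chain-rule expression for $g'$, continuity of $g$, and the symmetry reduction to $f(\mathbf{y})\le f(\mathbf{x})$ --- is standard bookkeeping; note also that the same $t_1$/mean-value construction can be repackaged as a direct proof that $g\le g(0)$ on $[0,1]$ if one prefers to avoid the contradiction format.
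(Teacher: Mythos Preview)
Your proof is correct. Both directions are handled cleanly: the ``only if'' part is the standard difference-quotient argument, and in the ``if'' part your choice of $t_1$ as the last time $g$ sits at level $g(0)$ before $\bar t$, followed by the mean value theorem on $[t_1,\bar t]$, is exactly the right device to produce a point $\mathbf{u}=\mathbf{x}_\xi$ with $\langle\nabla f(\mathbf{u}),\mathbf{y}-\mathbf{u}\rangle>0$ \emph{and} $f(\mathbf{u})>f(\mathbf{x})$, which is what the contrapositive hypothesis needs to yield the contradiction.

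Note, however, that the paper does not give its own proof of this proposition: it is stated with a citation to \citep{dennis1996numerical} and left unproved. So there is nothing in the paper to compare your argument against. Your proof is the classical one (essentially the Arrow--Enthoven/Mangasarian characterization of differentiable quasiconvex functions) and would serve as a self-contained justification where the paper simply defers to the literature.
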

We see that "$f$ is convex" $\Rightarrow$ "$f$ is  pseudoconvex" $\Rightarrow$ "$f$ is  quasiconvex" \citep{mangasarian1994nonlinear}.

\section{Solving the Scalarization problem}
Consider the following Scalarization problem:
\begin{align*}\tag{CP}\label{CP}
&\min s\left(\mathcal{F}(\mathbf{x}),\mathbf{r}\right)\\
&\text{s.t. }\mathbf{x}\in X,
\end{align*}
where $\mathbf{r}\in\mathcal{P}=\left\{\mathbf{r} \in \mathbb{R}^m_{>0}: \sum_i r_i = 1\right\}$, $s(\cdot,\cdot):Y\times\mathcal{P}\to\mathbb{R}$ is some monotonically increasing and pseudoconvex function on $Y$, be a differentiable function on $X$. We assume that with each $\mathbf{r}$, then the solution of Problem \eqref{CP} is unique.

\begin{definition}\citep{Luc1989}\label{definition2.8}
    Given a family $\mathcal{S}$ of functions $s(\cdot,\cdot):Y\times\mathcal{P}\to\mathbb{R}$, we say that $s$ is a completed scalarization function if for every $\mathbf{x}\in X_E$, there exists $s\in\mathcal{S}$ such that $\mathbf{x}\in E$ and $E\subseteq X_E$, where $E$ denotes the optimal solution set to Problem \eqref{CP}.
\end{definition}

Problem \eqref{CP} covers a broad class of complex nonconvex
optimization problems including, such as convex multiplicative programming,
bi-linear programming and quadratic programming.

\begin{theorem}\label{theorem3.1}
     $s$ is a completed scalarization function, and all efficient solutions for Problem \eqref{MOP} can be found by Problem \eqref{CP}.
\end{theorem}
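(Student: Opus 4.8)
The statement packages two claims. \textbf{Soundness}: for every fixed $\mathbf{r}\in\mathcal{P}$, each optimal solution of \eqref{CP} lies in $X_E$ (i.e. $E\subseteq X_E$ in the notation of Definition \ref{definition2.8}). \textbf{Completeness}: every $\mathbf{x}^*\in X_E$ is the (by assumption unique) optimal solution of \eqref{CP} for a suitable $\mathbf{r}$. Establishing both simultaneously shows that $s$ is a completed scalarization function and that $X_E=\bigcup_{\mathbf{r}\in\mathcal{P}}E(\mathbf{r})$, which is exactly Theorem \ref{theorem3.1}.

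For soundness I would argue by contradiction using monotonicity together with the uniqueness hypothesis: if $\mathbf{x}^*$ solves \eqref{CP} for $\mathbf{r}$ but some $\mathbf{x}'\in X$ dominates it, then $\mathcal{F}(\mathbf{x}')\preceq\mathcal{F}(\mathbf{x}^*)$ coordinatewise with $\mathcal{F}(\mathbf{x}')\neq\mathcal{F}(\mathbf{x}^*)$; since $s(\cdot,\mathbf{r})$ is nondecreasing (a consequence of being monotonically increasing and continuous) this forces $s(\mathcal{F}(\mathbf{x}'),\mathbf{r})\le s(\mathcal{F}(\mathbf{x}^*),\mathbf{r})$, hence equality by optimality, so $\mathbf{x}'\neq\mathbf{x}^*$ would be a second minimizer, contradicting uniqueness. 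Equivalently, and to prepare for the converse, I would pass to the first-order picture: $g_{\mathbf{r}}(\mathbf{x}):=s(\mathcal{F}(\mathbf{x}),\mathbf{r})$ is differentiable and pseudoconvex on the convex set $X$, so $\mathbf{x}^*$ minimizes it iff $\langle\nabla g_{\mathbf{r}}(\mathbf{x}^*),\mathbf{x}-\mathbf{x}^*\rangle\ge 0$ for all $\mathbf{x}\in X$; by the chain rule $\nabla g_{\mathbf{r}}(\mathbf{x}^*)=\sum_{i=1}^m\mu_i\nabla f_i(\mathbf{x}^*)$ with $\mu_i=\partial_{y_i}s(\mathcal{F}(\mathbf{x}^*),\mathbf{r})\ge 0$, not all zero, because $s$ is increasing. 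Then no feasible direction $d$ can satisfy $\nabla f_i(\mathbf{x}^*)^\top d<0$ for every $i$, so $\max_i\nabla f_i(\mathbf{x}^*)^\top d\ge 0$ for all such $d$, i.e. $\mathbf{x}^*$ is Pareto stationary, hence efficient by the proposition of Section 2 relating Pareto stationarity and Pareto optimality.

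For completeness I would run this chain in reverse. Take $\mathbf{x}^*\in X_E$; by the same proposition it is Pareto stationary, so by a Gordan-type theorem of the alternative (equivalently, a KKT argument at $\mathbf{x}^*$ on the convex set $X$) there exist multipliers $\lambda_i\ge 0$ with $\sum_i\lambda_i=1$ such that $\langle\sum_i\lambda_i\nabla f_i(\mathbf{x}^*),\mathbf{x}-\mathbf{x}^*\rangle\ge 0$ for all $\mathbf{x}\in X$. It then suffices to exhibit $\mathbf{r}=\mathbf{r}(\mathbf{x}^*)\in\mathcal{P}$ for which the gradient weights $\mu_i=\partial_{y_i}s(\mathcal{F}(\mathbf{x}^*),\mathbf{r})$ are proportional to $\lambda$: with such $\mathbf{r}$, $\nabla g_{\mathbf{r}}(\mathbf{x}^*)$ is a positive multiple of $\sum_i\lambda_i\nabla f_i(\mathbf{x}^*)$, so the variational inequality above says $\mathbf{x}^*$ is a stationary point of the pseudoconvex $g_{\mathbf{r}}$, and pseudoconvexity upgrades this to global optimality; uniqueness then makes $\mathbf{x}^*$ the single element of $E(\mathbf{r})$. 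The construction of $\mathbf{r}$ is where the concrete completed scalarization functions of Section 5 must enter: one verifies that, for those functions, the map $\mathbf{r}\mapsto(\partial_{y_1}s(\mathcal{F}(\mathbf{x}^*),\mathbf{r}),\dots,\partial_{y_m}s(\mathcal{F}(\mathbf{x}^*),\mathbf{r}))$ sweeps out every direction in the positive orthant as $\mathbf{r}$ ranges over $\mathcal{P}$.

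The main obstacle is precisely this last surjectivity claim. Soundness is essentially bookkeeping, but completeness is \emph{false} for an arbitrary pseudoconvex, monotone $s$ (plain linear scalarization already misses Pareto points that are not properly efficient), so the argument cannot be purely abstract and must exploit the particular family; moreover, at points where the stationarity multipliers $\lambda_i$ have a vanishing component while $\mathbf{r}$ is constrained to the open set $\mathcal{P}\subset\mathbb{R}^m_{>0}$, a limiting/closure argument on $\mathbf{r}$ (or a strengthening of the scalarization to an augmented form) is needed to capture that boundary behavior. Matching the weight map to the multipliers, including this boundary case, is the crux; everything else follows from the pseudoconvexity first-order characterization and the stationarity--efficiency proposition already available.
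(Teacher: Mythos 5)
Your \emph{soundness} half (every minimizer of \eqref{CP} is efficient for \eqref{MOP}) is correct and is essentially the paper's own route: the paper disposes of it in one line by citing Proposition 5.21 of its reference on increasing functions, which is exactly your monotonicity-plus-uniqueness contradiction argument. No issue there.

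The \emph{completeness} half is where your attempt stops short, and you say so yourself. The paper's proof of this half is simply an appeal to Proposition 5.22 of the cited monograph, using convexity of \eqref{MOP}; it gives no first-order argument at all. You instead try to reverse the KKT chain --- take $\mathbf{x}^*\in X_E$, extract Pareto-stationarity multipliers $\lambda$, and find $\mathbf{r}\in\mathcal{P}$ whose induced gradient weights $\mu_i=\partial_{y_i}s(\mathcal{F}(\mathbf{x}^*),\mathbf{r})$ are proportional to $\lambda$ --- and you correctly identify that the surjectivity of the map $\mathbf{r}\mapsto\mu$ onto the positive orthant is not a consequence of $s$ being monotonically increasing and pseudoconvex, so the argument cannot be closed at this level of generality. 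That diagnosis is sound mathematics: completeness in the sense of Definition \ref{definition2.8} is a property of a sufficiently rich \emph{family} $\mathcal{S}$, not of an arbitrary single monotone pseudoconvex $s$, and indeed the paper itself re-verifies completeness separately for each concrete choice (linear, Chebyshev, inverse utility) in Section 5, each time invoking a different proposition of the reference. So your proposal does not prove the theorem as stated --- the completeness direction is left as an identified obstruction rather than an argument --- but the obstruction you name is real and is papered over, rather than resolved, by the citation in the paper's own proof. The boundary issue you raise (multipliers $\lambda$ with zero components versus $\mathbf{r}$ ranging over the open simplex $\mathcal{P}$) is likewise a genuine subtlety that the paper does not address; it is precisely the properly-efficient-versus-efficient distinction that limits linear scalarization.
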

\begin{proof}
    By Proposition 5.21 \citep{dinh2005generalized}, then $s$ is an increasing function then, and every optimal solution of Problem \eqref{CP} is efficient for Problem \eqref{MOP}. It is obvious that $s(\mathcal{F}(\mathbf{x}),\mathbf{r})$ is a completed scalarization function according to Definition \ref{definition2.8}. Moreover, Problem \eqref{MOP} is a convex MOO problem, by Proposition 5.22 \citep{dinh2005generalized}, then all efficient solutions for Problem \eqref{MOP} can be found by Problem \eqref{CP}. It means that with each $\mathbf{r} (r_i>0, \sum_i r_i=1)$, then we will find an efficient solution of Problem \eqref{MOP} by solving Problem \eqref{CP}.
\end{proof}
\begin{proposition}
    $s$ is pseudoconvex function on $X$, and Problem \eqref{CP} is a pseudoconvex programming problem.
\end{proposition}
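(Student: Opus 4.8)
The plan is to prove the single substantive assertion here, that the objective of \eqref{CP} — which I read as $g(\mathbf{x}):=s(\mathcal{F}(\mathbf{x}),\mathbf{r})$ for an arbitrary fixed $\mathbf{r}\in\mathcal{P}$ — is pseudoconvex on the convex set $X$; once this is in hand, ``\eqref{CP} is a pseudoconvex programming problem'' is immediate from the definition (minimizing a pseudoconvex function over a convex set). Write $h:=s(\cdot,\mathbf{r})$, which by hypothesis is differentiable along $\mathcal{F}$, pseudoconvex on $Y$, and monotonically increasing. The first step is to record that monotonicity forces $\nabla h\ge 0$ componentwise on $Y$: starting from $\mathbf{u}\succ\mathbf{v}\Rightarrow h(\mathbf{u})>h(\mathbf{v})$, apply this along the perturbed directions $e_i+\delta\mathbf{1}$ and let $\delta\downarrow 0$ to get $\partial_i h(\mathbf{u})\ge 0$ for every $i$ and every $\mathbf{u}\in Y$.

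The second step is the chain rule together with the convexity of the $f_i$, which is the real engine of the argument. Since $\nabla g(\mathbf{x})=J\mathcal{F}(\mathbf{x})^{\top}\nabla h(\mathcal{F}(\mathbf{x}))$, for any $\mathbf{x},\mathbf{y}\in X$ we have
\[
\langle\nabla g(\mathbf{x}),\mathbf{y}-\mathbf{x}\rangle=\sum_{i=1}^{m}\partial_i h(\mathcal{F}(\mathbf{x}))\,\langle\nabla f_i(\mathbf{x}),\mathbf{y}-\mathbf{x}\rangle .
\]
Now assume the left-hand side is $\ge 0$. Convexity of each $f_i$ gives $f_i(\mathbf{y})-f_i(\mathbf{x})\ge\langle\nabla f_i(\mathbf{x}),\mathbf{y}-\mathbf{x}\rangle$; multiplying by the nonnegative weight $\partial_i h(\mathcal{F}(\mathbf{x}))$ and summing over $i$ yields
\[
\langle\nabla h(\mathcal{F}(\mathbf{x})),\,\mathcal{F}(\mathbf{y})-\mathcal{F}(\mathbf{x})\rangle=\sum_{i=1}^{m}\partial_i h(\mathcal{F}(\mathbf{x}))\bigl(f_i(\mathbf{y})-f_i(\mathbf{x})\bigr)\ge\langle\nabla g(\mathbf{x}),\mathbf{y}-\mathbf{x}\rangle\ge 0 .
\]

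The third step closes the loop: applying the pseudoconvexity of $h$ on $Y$ to the two points $\mathcal{F}(\mathbf{x}),\mathcal{F}(\mathbf{y})\in Y$, the inequality $\langle\nabla h(\mathcal{F}(\mathbf{x})),\mathcal{F}(\mathbf{y})-\mathcal{F}(\mathbf{x})\rangle\ge 0$ gives $h(\mathcal{F}(\mathbf{y}))\ge h(\mathcal{F}(\mathbf{x}))$, i.e. $g(\mathbf{y})\ge g(\mathbf{x})$, which is exactly pseudoconvexity of $g$ on $X$. The step I expect to need the most care is this last compositional one, and the thing worth emphasizing is \emph{why} it goes through cleanly: the argument never requires $h$ to be defined outside $Y$ nor $Y$ to be convex, because the convexity of $\mathcal{F}$ is precisely what converts ``nonnegative directional derivative of $g$ at $\mathbf{x}$ toward $\mathbf{y}$'' into ``nonnegative directional derivative of $h$ at $\mathcal{F}(\mathbf{x})$ toward $\mathcal{F}(\mathbf{y})$'', and both $\mathcal{F}(\mathbf{x})$ and $\mathcal{F}(\mathbf{y})$ are genuine elements of $Y$. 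As an alternative to carrying out steps one–three by hand, one may simply invoke the known composition rule that a differentiable, nondecreasing, pseudoconvex outer function composed with convex inner functions is pseudoconvex, and then cite it.
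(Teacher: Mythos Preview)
Your argument is correct and takes a genuinely different route from the paper. The paper's proof passes to the negative: since $s(\cdot,\mathbf{r})$ is pseudoconvex on $Y$, $-s$ is pseudoconcave and hence semistrictly quasiconcave; because $-s$ is decreasing and each $f_i$ is convex, a composition rule from \citep{avriel2010generalized} (their Proposition~5.3) yields that $-s(\mathcal{F}(\cdot),\mathbf{r})$ is semistrictly quasiconcave in $\mathbf{x}$, i.e.\ $s(\mathcal{F}(\cdot),\mathbf{r})$ is semistrictly quasiconvex on $X$; finally the paper invokes smoothness of $s$ to upgrade this to pseudoconvexity. By contrast, you never leave the pseudoconvex world: you establish $\nabla h\ge 0$ from monotonicity, use the chain rule and the first-order convexity inequality for each $f_i$ to show that $\langle\nabla g(\mathbf{x}),\mathbf{y}-\mathbf{x}\rangle\ge 0$ forces $\langle\nabla h(\mathcal{F}(\mathbf{x})),\mathcal{F}(\mathbf{y})-\mathcal{F}(\mathbf{x})\rangle\ge 0$, and then apply pseudoconvexity of $h$ directly at the pair $\mathcal{F}(\mathbf{x}),\mathcal{F}(\mathbf{y})\in Y$. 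Your approach is self-contained and makes transparent exactly which hypotheses drive which step; it also sidesteps the paper's final upgrade ``smooth $+$ semistrictly quasiconvex $\Rightarrow$ pseudoconvex'', which is not true in general (e.g.\ $x\mapsto x^3$) and would require additional justification in context. The paper's route is terser if one is willing to cite the Avriel composition result, but yours is the cleaner proof.
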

\begin{proof}
Since $-s(\mathcal{F}(\mathbf{x}),\mathbf{r})$ is pseudoconcave then semistrictly quasiconcave function over $Y$ \citep{mangasarian1994nonlinear}, we have $-s(\mathcal{F}(\mathbf{x}),\mathbf{r})$ is decreasing and semistrictly quasiconcave over $Y$. Since $\mathcal{F}(\mathbf{x})$ is convex, by Proposition 5.3 \citep{avriel2010generalized}, $-s(\mathcal{F}(\mathbf{x}),\mathbf{r})$ is semistrictly quasiconcave with respect to $\mathbf{x}$. Therefore, $s$ is semi-strictly quasiconvex. In the case when $s$ is smooth over $Y$, then $s$ is pseudoconvex on $X$.
\end{proof}
Problem \eqref{CP} is a pseudoconvex programming problem. Therefore, we utilize an algorithm of \citep{adaptiveThang} to solve Problem \eqref{CP} by the gradient descent method. Despite other methods, such as the neurodynamic approach, which uses recurrent neural network models to solve pseudoconvex programming problems with unbounded constraint sets \citep{bian2018neural, xu2020neurodynamic,  liu2022one}. It was restricted to the ability to scalable applications in machine learning.

First, we  recall some definitions and basic results that will be used in the next section. The interested reader is referred to \citep{bauschke2011convex} and \citep{rockafellar1970convex} for bibliographical references.  

For $x\in \mathbb{R}^m$, denote by $P_X (\mathbf{x})$ the projection of $\mathbf{x}$ onto $X$, i.e., 
$$ 
P_X(\mathbf{x}):=\argmin\left\{\|\mathbf{z}-\mathbf{x}\|:\mathbf{z}\in X\right\}.
 $$
\begin{proposition}\citep{bauschke2011convex}) It holds that
\begin{itemize}
\item[\textup{(i)}]  $\left \| P_X(\mathbf{x})-P_X(\mathbf{y}) \right \| \leq \left \|\mathbf{x}-\mathbf{y} \right \|$ for all $\mathbf{x},\mathbf{y}\in \mathbb{R}^m$,
\item[\textup{(ii)}] $\left\langle  \mathbf{y}-P_X(\mathbf{x}),\mathbf{x}-P_X(\mathbf{x})  \right\rangle \leq 0$ for all $\mathbf{x}\in \mathbb{R}^m$, $\mathbf{y}\in X$.
\end{itemize}
\end{proposition}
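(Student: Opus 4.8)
The plan is to prove (ii) first — the obtuse‑angle (variational) characterization of the projection — and then obtain the nonexpansiveness in (i) as a short consequence. Throughout I use, as is implicit in the setup, that $X$ is a nonempty \emph{closed} convex subset of $\mathbb{R}^m$, so that for each $\mathbf{x}$ the minimizer $P_X(\mathbf{x})$ exists and is unique: existence follows from coercivity of $\mathbf{z}\mapsto\|\mathbf{z}-\mathbf{x}\|$ together with closedness of $X$, and uniqueness from strict convexity of the squared norm on the convex set $X$.

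For (ii), fix $\mathbf{x}\in\mathbb{R}^m$ and write $p:=P_X(\mathbf{x})$. Given any $\mathbf{y}\in X$, convexity of $X$ gives $p+t(\mathbf{y}-p)\in X$ for every $t\in[0,1]$. Since $p$ minimizes $\mathbf{z}\mapsto\|\mathbf{z}-\mathbf{x}\|^2$ over $X$, the scalar function $\phi(t):=\|p+t(\mathbf{y}-p)-\mathbf{x}\|^2$ attains its minimum over $[0,1]$ at $t=0$, hence $\phi'(0)\ge 0$. Expanding $\phi(t)=\|p-\mathbf{x}\|^2+2t\langle p-\mathbf{x},\mathbf{y}-p\rangle+t^2\|\mathbf{y}-p\|^2$ yields $\phi'(0)=2\langle p-\mathbf{x},\mathbf{y}-p\rangle\ge 0$, i.e. $\langle \mathbf{y}-p,\mathbf{x}-p\rangle\le 0$, which is exactly (ii).

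For (i), set $p:=P_X(\mathbf{x})$ and $q:=P_X(\mathbf{y})$. Apply (ii) at the point $\mathbf{x}$ with the feasible point $q\in X$, and at the point $\mathbf{y}$ with the feasible point $p\in X$, to get $\langle q-p,\mathbf{x}-p\rangle\le 0$ and $\langle q-p,q-\mathbf{y}\rangle\le 0$. Combining these two inequalities and rearranging gives $\langle q-p,(q-p)+(\mathbf{x}-\mathbf{y})\rangle\le 0$, i.e. $\|q-p\|^2\le\langle q-p,\mathbf{y}-\mathbf{x}\rangle$. Cauchy–Schwarz then gives $\|q-p\|^2\le\|q-p\|\,\|\mathbf{y}-\mathbf{x}\|$, and dividing by $\|q-p\|$ (the inequality being trivial when $p=q$) yields $\|P_X(\mathbf{x})-P_X(\mathbf{y})\|\le\|\mathbf{x}-\mathbf{y}\|$.

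There is no genuine obstacle here: the only point requiring care is the standing hypothesis that $X$ be closed (the excerpt only writes "nonempty convex"), which is what makes $P_X$ well defined; once well‑definedness is granted, both parts are elementary. In fact the same one‑line computation in (i) shows the stronger firm nonexpansiveness $\|P_X(\mathbf{x})-P_X(\mathbf{y})\|^2\le\langle P_X(\mathbf{x})-P_X(\mathbf{y}),\mathbf{x}-\mathbf{y}\rangle$, of which (i) is an immediate corollary, so one could equally well state and prove that and deduce (i).
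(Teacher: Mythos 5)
Your proof is correct: the variational characterization (ii) via the one-sided derivative of $t\mapsto\|p+t(\mathbf{y}-p)-\mathbf{x}\|^2$ at $t=0$, followed by the two applications of (ii) and Cauchy--Schwarz to get (i), is the standard argument, and your remark that $X$ must be \emph{closed} (not just nonempty and convex, as the surrounding text states) for $P_X$ to be well defined is a legitimate and worthwhile correction. The paper itself gives no proof --- it cites this as a known result from Bauschke and Combettes --- and the textbook proof proceeds in essentially the same way, so there is nothing to reconcile.
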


\begin{algorithm}[ht]
\KwIn{$\mathbf{x}^0 \in X$,  $\lambda \subset (0,2/L)$. Set $k=0$.}
\KwOut{$x^*$.}

\While{not converged}{

    $\mathbf{x}^{k+1}=P_X \left(\mathbf{x}^k-\lambda \nabla f \left(\mathbf{x}^k\right)\right)$
    
    \uIf{$\mathbf{x}^{k+1}=\mathbf{x}^{k}$}{
    STOP \;
  }
  \Else{
  $k :=k+1$ \;
  }    
}
$x^* = x^{k+1}$.
\caption{\label{algGD}: Algorithm GD for Problem \eqref{CP}.}
\end{algorithm}

Now, suppose that the algorithm generates an infinite sequence. Below theorem will prove that this sequence converges to a solution of the Problem \eqref{CP}.
	\begin{theorem}
		Assume that the sequence $ \left\{ \mathbf{x}^k\right\} $ is generated by Algorithm \ref{algGD}. Then,
		the sequence $ \left\{f\left(\mathbf{x}^k\right) \right\} $ is convergent and each limit point (if any) of the sequence $ \left\{ \mathbf{x}^k\right\} $ is a stationary point of the problem. Moreover,
		\begin{itemize}
			\item[$ \bullet $] if $ f $ is quasiconvex on $ X $, then the sequence $ \left\{\mathbf{x}^k \right\} $ converges to a stationary point of the problem.
			\item[$ \bullet $]  if $ f $ is pseudoconvex on $ X $, then the sequence $ \left\{\mathbf{x}^k \right\} $ converges to a solution of the problem.
		\end{itemize}
	\end{theorem}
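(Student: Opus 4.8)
The plan is to run the classical convergence analysis of the projected–gradient iteration $\mathbf{x}^{k+1}=P_X\!\left(\mathbf{x}^k-\lambda\nabla f(\mathbf{x}^k)\right)$, where $f=s(\mathcal{F}(\cdot),\mathbf{r})$ is the (differentiable) objective of Problem \eqref{CP}, under the standing hypotheses that $\nabla f$ is $L$-Lipschitz on $X$ (so that $\lambda\in(0,2/L)$ is meaningful) and that Problem \eqref{CP} has a solution, hence $f$ is bounded below on $X$. Write $\delta_k:=\|\mathbf{x}^{k+1}-\mathbf{x}^k\|$, and call $\bar{\mathbf{x}}\in X$ a \emph{stationary point of the problem} when $\langle\nabla f(\bar{\mathbf{x}}),\mathbf{y}-\bar{\mathbf{x}}\rangle\ge 0$ for all $\mathbf{y}\in X$, equivalently $\bar{\mathbf{x}}=P_X(\bar{\mathbf{x}}-\lambda\nabla f(\bar{\mathbf{x}}))$.

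First I would prove a sufficient–decrease inequality. Applying the projection property (ii) with $\mathbf{y}=\mathbf{x}^k$ and $\mathbf{x}=\mathbf{x}^k-\lambda\nabla f(\mathbf{x}^k)$ yields $\delta_k^2\le\lambda\langle\nabla f(\mathbf{x}^k),\mathbf{x}^k-\mathbf{x}^{k+1}\rangle$; combined with the descent lemma $f(\mathbf{x}^{k+1})\le f(\mathbf{x}^k)+\langle\nabla f(\mathbf{x}^k),\mathbf{x}^{k+1}-\mathbf{x}^k\rangle+\tfrac{L}{2}\delta_k^2$ this gives $f(\mathbf{x}^{k+1})\le f(\mathbf{x}^k)-\tau\delta_k^2$ with $\tau:=\tfrac1\lambda-\tfrac L2>0$. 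Hence $\{f(\mathbf{x}^k)\}$ is nonincreasing and bounded below, so it converges to some $f^*$, and telescoping gives $\tau\sum_k\delta_k^2\le f(\mathbf{x}^0)-f^*<\infty$, so $\delta_k\to 0$. For a limit point $\bar{\mathbf{x}}$ with $\mathbf{x}^{k_j}\to\bar{\mathbf{x}}$, also $\mathbf{x}^{k_j+1}\to\bar{\mathbf{x}}$; passing to the limit in the iteration, using continuity of $\nabla f$ and nonexpansiveness of $P_X$ (property (i)), gives $\bar{\mathbf{x}}=P_X(\bar{\mathbf{x}}-\lambda\nabla f(\bar{\mathbf{x}}))$, i.e.\ $\bar{\mathbf{x}}$ is stationary — this settles the first assertion.

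For the quasiconvex case I would establish a quasi-Fej\'er estimate relative to the sublevel set $U:=\{\mathbf{x}\in X:f(\mathbf{x})\le f^*\}$, which is nonempty since it contains the solution of \eqref{CP}. Fix $\mathbf{x}^*\in U$. Expanding $\|\mathbf{x}^{k+1}-\mathbf{x}^*\|^2$ and using property (ii) with $\mathbf{y}=\mathbf{x}^*$ gives $\|\mathbf{x}^{k+1}-\mathbf{x}^*\|^2\le\|\mathbf{x}^k-\mathbf{x}^*\|^2-\delta_k^2+2\lambda\langle\nabla f(\mathbf{x}^k),\mathbf{x}^*-\mathbf{x}^{k+1}\rangle$. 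Splitting $\mathbf{x}^*-\mathbf{x}^{k+1}=(\mathbf{x}^*-\mathbf{x}^k)+(\mathbf{x}^k-\mathbf{x}^{k+1})$, the quasiconvexity criterion ($f(\mathbf{x}^*)\le f^*\le f(\mathbf{x}^k)$ implies $\langle\nabla f(\mathbf{x}^k),\mathbf{x}^*-\mathbf{x}^k\rangle\le0$) kills the first term, while the descent lemma bounds the second by $f(\mathbf{x}^k)-f(\mathbf{x}^{k+1})+\tfrac L2\delta_k^2$; substituting $\delta_k^2\le\tfrac1\tau\bigl(f(\mathbf{x}^k)-f(\mathbf{x}^{k+1})\bigr)$ converts the whole right-hand side into the telescoping quantity, giving $\|\mathbf{x}^{k+1}-\mathbf{x}^*\|^2\le\|\mathbf{x}^k-\mathbf{x}^*\|^2+C\bigl(f(\mathbf{x}^k)-f(\mathbf{x}^{k+1})\bigr)$ for a constant $C>0$ depending only on $\lambda,L$. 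Since $\sum_k\bigl(f(\mathbf{x}^k)-f(\mathbf{x}^{k+1})\bigr)=f(\mathbf{x}^0)-f^*<\infty$, the sequence is quasi-Fej\'er monotone with respect to the nonempty set $U$; the standard quasi-Fej\'er lemma then yields boundedness (hence existence of a limit point $\bar{\mathbf{x}}$, which lies in $U$ and is stationary by the previous step) and convergence of $\{\|\mathbf{x}^k-\bar{\mathbf{x}}\|\}$; as a subsequence tends to $0$, the whole sequence converges to $\bar{\mathbf{x}}$.

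Finally, since pseudoconvexity implies quasiconvexity, the pseudoconvex case inherits $\mathbf{x}^k\to\bar{\mathbf{x}}$ with $\bar{\mathbf{x}}$ stationary, and the definition of pseudoconvexity upgrades this: from $\langle\nabla f(\bar{\mathbf{x}}),\mathbf{y}-\bar{\mathbf{x}}\rangle\ge0$ we get $f(\mathbf{y})\ge f(\bar{\mathbf{x}})$ for every $\mathbf{y}\in X$, so $\bar{\mathbf{x}}$ is a global minimizer, i.e.\ a solution of \eqref{CP}. The main obstacle is the quasiconvex step: obtaining the quasi-Fej\'er inequality valid for the \emph{full} range $\lambda\in(0,2/L)$ requires absorbing the cross term $2\lambda\langle\nabla f(\mathbf{x}^k),\mathbf{x}^k-\mathbf{x}^{k+1}\rangle$ into $\sum_k\bigl(f(\mathbf{x}^k)-f(\mathbf{x}^{k+1})\bigr)$ by using \emph{both} the descent lemma and the sufficient-decrease bound on $\delta_k^2$, and one must verify that $U$ is nonempty — which is precisely where the existence of a solution to \eqref{CP} enters. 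The remaining items (convergence of $\{f(\mathbf{x}^k)\}$, $\delta_k\to0$, and stationarity of limit points) are routine.
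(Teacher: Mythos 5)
The paper does not actually supply a proof of this theorem: it is stated without argument and implicitly imported from the cited source for Algorithm \ref{algGD} (the paper of Thang et al.\ on gradient methods for pseudoconvex programming), so there is no in-paper proof to compare against. Your proposal is a correct and essentially complete reconstruction of the standard analysis that such a proof would follow. The three ingredients are all sound: (i) the sufficient-decrease inequality $f(\mathbf{x}^{k+1})\le f(\mathbf{x}^k)-\bigl(\tfrac1\lambda-\tfrac L2\bigr)\|\mathbf{x}^{k+1}-\mathbf{x}^k\|^2$ obtained by combining the variational characterization of the projection with the descent lemma, which yields convergence of $\{f(\mathbf{x}^k)\}$, summability of $\|\mathbf{x}^{k+1}-\mathbf{x}^k\|^2$, and stationarity of limit points via the fixed-point characterization $\bar{\mathbf{x}}=P_X\bigl(\bar{\mathbf{x}}-\lambda\nabla f(\bar{\mathbf{x}})\bigr)$; (ii) the quasi-Fej\'er estimate relative to the sublevel set $U=\{\mathbf{x}\in X: f(\mathbf{x})\le f^*\}$, where the first-order quasiconvexity criterion (Proposition 2.2 of the paper) eliminates the cross term $\langle\nabla f(\mathbf{x}^k),\mathbf{x}^*-\mathbf{x}^k\rangle$ and the remaining terms telescope; and (iii) the upgrade from stationary point to global minimizer via the definition of pseudoconvexity.

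Two points deserve explicit mention rather than being left implicit. First, the whole argument rests on $\nabla f$ being $L$-Lipschitz on $X$ and on $f$ being bounded below there; neither is stated as a hypothesis of the theorem in the paper (the first is only hinted at by the step-size condition $\lambda\in(0,2/L)$, the second by the standing assumption that \eqref{CP} has a unique solution), and you are right to surface both as standing hypotheses. Second, in the quasiconvex step you need the limit point $\bar{\mathbf{x}}$ to belong to $U$ before you may invoke the convergence of $\|\mathbf{x}^k-\bar{\mathbf{x}}\|$; this follows from continuity, since $f(\bar{\mathbf{x}})=\lim_j f(\mathbf{x}^{k_j})=f^*$, and is worth one explicit line in a written-out version. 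With those two clarifications the argument is complete.
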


Next, the convergence rate of Algorithm \ref{algGD} is estimated by solving  unconstrained optimization problems.
\begin{corollary}
	Assume that $f$ is convex, $ X=\mathbb R^m $ and $\left\{\mathbf{x}^k\right\}$ is the sequence generated by Algorithm \ref{algGD}. Then,
	$$ 
f(\mathbf{x}^k)-f(\mathbf{x}^*) =O \left( \frac 1 k\right) , 
 $$
where $\mathbf{x}^*$ is a solution of the problem.
	\end{corollary}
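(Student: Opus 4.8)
The plan is to reduce Algorithm \ref{algGD} to ordinary gradient descent and then run the standard $O(1/k)$ estimate for smooth convex minimization. When $X=\mathbb{R}^m$ the projection $P_X$ is the identity, so the update is simply $\mathbf{x}^{k+1}=\mathbf{x}^k-\lambda\nabla f(\mathbf{x}^k)$; if the stopping test ever triggers then $\nabla f(\mathbf{x}^k)=0$, $\mathbf{x}^k$ is already optimal and the bound is trivial, so we may assume an infinite sequence. Throughout, $L$ is the Lipschitz constant of $\nabla f$ appearing in the step-size condition $\lambda\in(0,2/L)$.

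First I would apply the descent lemma: $L$-Lipschitz continuity of $\nabla f$ gives
\[
f(\mathbf{x}^{k+1})\le f(\mathbf{x}^k)-\beta\,\|\nabla f(\mathbf{x}^k)\|^2,\qquad \beta:=\lambda\Big(1-\tfrac{L\lambda}{2}\Big)>0,
\]
so $\{f(\mathbf{x}^k)\}$ is nonincreasing and the sufficient-decrease bound $\|\nabla f(\mathbf{x}^k)\|^2\le\beta^{-1}\big(f(\mathbf{x}^k)-f(\mathbf{x}^{k+1})\big)$ holds. Convexity of $f$ gives $f(\mathbf{x}^k)-f(\mathbf{x}^*)\le\langle\nabla f(\mathbf{x}^k),\mathbf{x}^k-\mathbf{x}^*\rangle$. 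Expanding $\|\mathbf{x}^{k+1}-\mathbf{x}^*\|^2=\|\mathbf{x}^k-\mathbf{x}^*\|^2-2\lambda\langle\nabla f(\mathbf{x}^k),\mathbf{x}^k-\mathbf{x}^*\rangle+\lambda^2\|\nabla f(\mathbf{x}^k)\|^2$ to substitute for the inner product, and combining with the two previous inequalities, I obtain
\[
f(\mathbf{x}^{k+1})-f(\mathbf{x}^*)\le\frac{1}{2\lambda}\Big(\|\mathbf{x}^k-\mathbf{x}^*\|^2-\|\mathbf{x}^{k+1}-\mathbf{x}^*\|^2\Big)+\Big(\tfrac{\lambda}{2}-\beta\Big)\|\nabla f(\mathbf{x}^k)\|^2 .
\]

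The final step is to telescope from $i=0$ to $k-1$. If $\lambda\le 1/L$ then $\tfrac\lambda2-\beta\le 0$, the gradient term is dropped, and the sum collapses to $\sum_{i=0}^{k-1}\big(f(\mathbf{x}^{i+1})-f(\mathbf{x}^*)\big)\le\|\mathbf{x}^0-\mathbf{x}^*\|^2/(2\lambda)$; monotonicity of $\{f(\mathbf{x}^k)\}$ gives $k\big(f(\mathbf{x}^k)-f(\mathbf{x}^*)\big)\le$ that sum, hence $f(\mathbf{x}^k)-f(\mathbf{x}^*)\le\|\mathbf{x}^0-\mathbf{x}^*\|^2/(2\lambda k)=O(1/k)$. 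The one subtle point — and the only real obstacle — is the range $1/L<\lambda<2/L$, where $\tfrac\lambda2-\beta>0$; there I would bound the stray gradient term by the sufficient-decrease inequality, which adds an extra telescoping contribution $\gamma\big(f(\mathbf{x}^0)-f(\mathbf{x}^k)\big)\le\gamma\big(f(\mathbf{x}^0)-f(\mathbf{x}^*)\big)$ with $\gamma=(L\lambda-1)/(2-L\lambda)$, a constant independent of $k$, leaving the $O(1/k)$ conclusion intact.
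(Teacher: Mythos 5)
Your argument is correct and complete: with $X=\mathbb{R}^m$ the scheme is plain gradient descent, and your chain of inequalities (descent lemma, convexity, the three-point expansion of $\|\mathbf{x}^{k+1}-\mathbf{x}^*\|^2$, then telescoping together with monotonicity of $\{f(\mathbf{x}^k)\}$) is the standard route to the $O(1/k)$ rate; I checked the constants, and $\beta=\lambda(1-L\lambda/2)$, $\tfrac{\lambda}{2}-\beta=\tfrac{\lambda}{2}(L\lambda-1)$ and $\gamma=(L\lambda-1)/(2-L\lambda)$ are all as you state. The paper itself gives no proof of this corollary (it is stated bare, with the convergence analysis deferred to the cited reference on the algorithm), so there is nothing to compare against; your write-up supplies exactly the argument one would expect, and the explicit treatment of the regime $1/L<\lambda<2/L$ via the sufficient-decrease bound is a worthwhile addition, since the algorithm's step-size condition permits that range while most textbook statements of the rate only cover $\lambda\le 1/L$. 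The only implicit hypothesis you rely on is $L$-smoothness of $f$, which is not written in the corollary but is already baked into the step-size condition $\lambda\in(0,2/L)$ of Algorithm~\ref{algGD}, so flagging it as you did is the right call.
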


\section{Controllable Pareto Front Learning}\label{sec4}
In this section, we present a method to approximate the entire Pareto front and help map from a preference vector to an optimal Pareto solution respectively. Unlike \citep{lin2020controllable} didn't specify a mapping between a preference vector and a corresponding Pareto optimal solution, or \citep{hoang2022improving} navigate the generated solutions by cosine similarity function, we introduce Controllable Pareto Front Learning framework by transforming Problem \eqref{CP} into:
\begin{align*}\label{CS-PHN}\tag{PHN-CSF}
 &\phi^* = \argmin_{\phi} \mathbb{E}_{\mathbf{r} \sim \text{Dir}(\alpha)} \text{ }s\left(\mathcal{F}\left(\mathbf{x}_{\mathbf{r}}\right),\mathbf{r}\right)\\
    &\text{ s.t. } \mathbf{x}_{\mathbf{r}} = h\left(\mathbf{r}, \phi^*\right) \in X_E, h\left(\mathcal{P}, \phi^*\right) \equiv X_E
\end{align*}
where $h: \mathcal{P} \times \mathbb{R}^q \rightarrow \mathbb{R}^n$, $q$ is the number of parameters of hypernetwork, and Dir$(\alpha)$ is Dirichlet distribution. After the optimization process, we can achieve the corresponding efficient solution $\mathbf{x}_{\mathbf{r}} = h\left(\mathbf{r}, \phi^*\right)$ which is the solution of the Problem \eqref{CP}.

We use the below updating rule to solve problem \ref{CS-PHN}:
\begin{align*}
    \phi_{t+1} =\phi_t - \xi\nabla_{\phi}s\left(\phi_t,\mathbf{r}\right),
\end{align*}
where $\xi$ is the step size.

Let $\mathbf{x}\left(\hat{\mathbf{r}}\right)$ be a local optimal minimum to Problem \eqref{CP} corresponding to preference vector $\hat{\mathbf{r}}$, i.e. $\nabla s\left(\mathbf{x},\hat{\mathbf{r}}\right) = 0$. Then $\mathbf{x}\left(\hat{\mathbf{r}}\right) = h\left(\hat{\mathbf{r}},\phi\right)$ is a Pareto optimal solution to Problem \eqref{MOP}, and there exists a neighborhood $U$ of $\hat{\mathbf{r}}$ and a smooth mapping $\mathbf{x}(\mathbf{r})$ such that $\mathbf{x}\left(\mathbf{r}^*\right)_{\mathbf{r}^*\in U}$ is also a Pareto optimal solution to Problem \eqref{MOP}.

Indeed, by using universal approximation theorem \citep{cybenko1989approximation}, we can approximate smooth function $\mathbf{x}(\mathbf{r})$ by a network $h\left(\mathbf{r},\phi\right)$ with $\forall \mathbf{r}\in\mathcal{P} = \left\{\mathbf{r} \in \mathbb{R}^m_{>0}: \sum_i r_i = 1\right\}$, and with an $error\le\epsilon$ in approximating $\mathbf{x}\left(\hat{\mathbf{r}}\right)$ (Theorem 4 of \citep{galanti2020modularity}. Besides, Problem \eqref{CP} is a pseudoconvex programming problem, which means, $\mathbf{x}\left(\hat{\mathbf{r}}\right)$ is a global optimal minimum. Moreover, by Proposition 2.1, $\mathbf{x}$ is also a Pareto optimal solution to Problem \eqref{MOP}. We choose any $\mathbf{r}^{*}\in U - $ neighborhood of $\hat{\mathbf{r}}$, i.e. $\mathbf{r}^*\in\mathcal{P}$, obviously a local optimal minimum $\mathbf{x}(\mathbf{r}^*)$ to Problem \eqref{CP} be approximated by a smooth function $h\left(\mathbf{r}^*,\phi\right)$ \citep{cybenko1989approximation}. Hence, $\mathbf{x}\left(\mathbf{r}^*\right)$ is also a Pareto optimal solution to Problem \eqref{MOP}.

\begin{algorithm}[h]
\KwIn{Init $\phi_0,t=0$.}
\KwOut{$\phi^*$.}

\While{not converged}{
$\mathbf{r}\sim \text{Dirichlet}(\alpha)$

$\mathbf{x}_{\mathbf{r}} = h \left(\mathbf{r},\phi_t\right)$

$\phi_{t+1} = \phi_t - \xi\nabla_{\phi}s\left(\mathcal{F}\left(\mathbf{x}_{\mathbf{r}}\right),\mathbf{r}\right)$

$t=t+1$
}
$\phi^* = \phi_t$
\caption{: Hypernetwork training for Problem \eqref{CS-PHN}.}
\label{alg:hypermoo}
\end{algorithm}
\section{Preference-Based Completed Scalarization Functions}
Based on Scalarization theories, we discuss three approaches to define the different preference-base functions: scalarization functions with preference vector parameter $\mathbf{r}$.
\subsection{Preference-Based Linear Scalarization function}
A simple and straightforward approach is to define the preference vector $\mathbf{r}$ and the corresponding solution $\mathbf{x}_{\mathbf{r}}$ via the weighted linear scalarization:
\begin{align*}\label{LS}\tag{LS}
    s\left(\mathcal{F}\left(\mathbf{x}_{\mathbf{r}}\right),\mathbf{r}\right) = \sum_{i=1}^m r_{i}f_i\left(\mathbf{x}_{\mathbf{r}}\right),
\end{align*}
where $\mathbf{x}_{\mathbf{r}} = h(\mathbf{r},\phi)$, $\mathbf{r}\in \left\{\mathbf{r} \in \mathbb{R}^m_{>0}: \sum_i^{m} r_i = 1\right\}$.

Although this approach is straightforward and proposed by \citep{navon2020learning}, Linear scalarization can not find Pareto optimal solutions on the non-convex part of the Pareto front \citep{das1997closer, boyd2004convex}. In other words, unless the problem has a convex Pareto front, the generator defined by linear scalarization cannot cover the whole Pareto set manifold.
\begin{theorem}\citep{miettinen2012nonlinear}
\label{theorem5.1}
    If $\mathbf{x}^*\in X$ is Pareto optimal of Problem \eqref{MOP}, then there exists a preference vector $\mathbf{r} \left(r_i>0\right)$ such that $\mathbf{x}^*$ is a solution of Problem \eqref{LS}
\end{theorem}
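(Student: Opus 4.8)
The plan is to reduce the statement to a convex separation argument in objective space. First I would pass from the feasible set $X$ to the \emph{extended image set}
\[
  A \;=\; \mathcal{F}(X) + \mathbb{R}^m_{\ge 0} \;=\; \bigl\{\mathbf{y}\in\mathbb{R}^m : \exists\,\mathbf{x}\in X,\ f_i(\mathbf{x})\le y_i\ \forall i\bigr\},
\]
and check that $A$ is convex, which uses precisely the standing hypotheses of Section~2 that $X$ is convex and each $f_i$ is convex on $X$ (a convex combination of points of $A$ dominates, coordinatewise via Jensen, the image of the corresponding convex combination of the underlying feasible points). Write $\mathbf{p}:=\mathcal{F}(\mathbf{x}^*)\in A$. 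Pareto optimality of $\mathbf{x}^*$ (even weak Pareto optimality) says there is no $\mathbf{x}\in X$ with $\mathcal{F}(\mathbf{x})<\mathbf{p}$ componentwise, which translates exactly into the geometric fact $\bigl(\mathbf{p}-\operatorname{int}\mathbb{R}^m_{\ge 0}\bigr)\cap A=\emptyset$.

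Next I would separate. The translated open cone $\mathbf{p}-\operatorname{int}\mathbb{R}^m_{\ge 0}$ is open, nonempty and convex, and disjoint from the convex set $A$, so the separating hyperplane theorem gives a nonzero $\mathbf{r}\in\mathbb{R}^m$ and $\gamma\in\mathbb{R}$ with $\langle\mathbf{r},\mathbf{a}\rangle\ge\gamma$ for all $\mathbf{a}\in A$ and $\langle\mathbf{r},\mathbf{p}-\mathbf{d}\rangle\le\gamma$ for all $\mathbf{d}\in\operatorname{int}\mathbb{R}^m_{\ge 0}$. Letting $\mathbf{d}\to 0$ yields $\langle\mathbf{r},\mathbf{p}\rangle\le\gamma$, while $\mathbf{p}\in A$ gives $\langle\mathbf{r},\mathbf{p}\rangle\ge\gamma$, hence $\langle\mathbf{r},\mathbf{p}\rangle=\gamma$. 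Since $A$ contains the whole orthant $\mathbf{p}+\mathbb{R}^m_{\ge 0}$, boundedness from below of $\langle\mathbf{r},\cdot\rangle$ on $A$ forces $r_i\ge 0$ for every $i$; as $\mathbf{r}\ne 0$ we have $\sum_i r_i>0$ and may rescale to $\sum_i r_i=1$. Finally, for every $\mathbf{x}\in X$ we have $\mathcal{F}(\mathbf{x})\in A$, so $\sum_i r_i f_i(\mathbf{x})=\langle\mathbf{r},\mathcal{F}(\mathbf{x})\rangle\ge\gamma=\langle\mathbf{r},\mathcal{F}(\mathbf{x}^*)\rangle=\sum_i r_i f_i(\mathbf{x}^*)$, i.e.\ $\mathbf{x}^*$ solves \eqref{LS} for this $\mathbf{r}$.

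The genuine obstacle is upgrading ``$r_i\ge 0$'' to the asserted ``$r_i>0$'': the separation above only precludes a common strict descent direction, hence only certifies \emph{weak} Pareto optimality, and there are convex instances whose Pareto points (improperly efficient ones) minimize no strictly positive weighted sum. To close this I would invoke the convex case of Geoffrion's theorem, which characterizes \emph{proper} Pareto optimality by the existence of a strictly positive weighting vector — so the statement is really about properly Pareto optimal $\mathbf{x}^*$, or one imposes a mild regularity assumption under which every efficient point is proper (e.g.\ $\mathcal{F}(X)+\mathbb{R}^m_{\ge0}$ polyhedral, or suitable compactness). Concretely, properness supplies, for each $i$, a finite bound on the trade-off ratios against $f_i$, and a refined separation (separating $A$ from $\mathbf{p}$ minus a cone enlarged to absorb those bounded trade-offs) delivers an $\mathbf{r}$ with all coordinates positive. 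An equivalent differentiable route is to start from Pareto stationarity of $\mathbf{x}^*$ (Proposition~2.1), apply a theorem of the alternative (Gordan/Motzkin) to get $\lambda_i\ge 0$, $\sum_i\lambda_i=1$, with $\sum_i\lambda_i\nabla f_i(\mathbf{x}^*)\in -N_X(\mathbf{x}^*)$, and conclude by convexity that $\mathbf{x}^*$ globally minimizes $\sum_i\lambda_i f_i$ — again with strict positivity of the multipliers being exactly the point where proper efficiency must enter.
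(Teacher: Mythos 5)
The paper does not actually prove this statement: it is quoted verbatim from Miettinen's book, so there is no in-paper argument to compare against, and the surrounding text only uses the result (via the notion of a completed scalarization function) to conclude that \eqref{LS} sweeps out the whole Pareto set. Your separation proof is the standard and correct derivation of the convex weighting theorem: $A=\mathcal{F}(X)+\mathbb{R}^m_{\ge 0}$ is convex under the standing convexity hypotheses, Pareto (indeed weak Pareto) optimality makes $A$ disjoint from the translated open negative orthant, and the separating hyperplane yields a nonzero $\mathbf{r}\ge 0$ for which $\mathbf{x}^*$ globally minimizes $\langle\mathbf{r},\mathcal{F}(\cdot)\rangle$. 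Your closing observation is the important part and is exactly right: this argument can only deliver $r_i\ge 0$, and the statement with $r_i>0$ is false for general convex problems. For instance, minimizing $(y_1,y_2)$ over the convex set $\{(y_1,y_2): y_1\ge 0,\ y_2\ge -\sqrt{y_1}\}$ has $(0,0)$ Pareto optimal, yet for any $r_1,r_2>0$ the weighted sum $r_1y_1-r_2\sqrt{y_1}$ is negative for small $y_1>0$, so $(0,0)$ minimizes no strictly positive weighted sum. Miettinen's actual theorem for convex problems guarantees only nonnegative weights; strict positivity is Geoffrion's characterization of \emph{properly} Pareto optimal points, which is precisely the hypothesis you say must be added (or one restricts to problems in which every efficient point is proper). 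In short, your proof is correct for what it proves, and the gap you identify is a defect in the statement as transcribed in the paper rather than in your own argument; the paper's subsequent claim that \emph{all} Pareto optimal solutions of \eqref{MOP} can be found by \eqref{LS} with $r_i>0$ inherits the same caveat.
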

According to Theorem \ref{theorem5.1}, all the Pareto optimal solutions of Problem \eqref{MOP} can be found by Problem \eqref{LS}. Indeed, by $f_i$ is a convex function, then $s(\mathcal{F}(\mathbf{x}),\mathbf{r})$ is a convex function, i.e., $s$ is a pseudoconvex function on $X$. From Proposition 5.52 \citep{dinh2005generalized}, and $s$ is a monotonically increasing function, then we can obtain \eqref{LS} is a completed scalarization function, combined with Theorem \ref{theorem3.1}, we confirmed that all the Pareto optimal solutions of Problem \eqref{MOP} could be found by Problem \eqref{LS}.

\subsection{Preference-Based Chebyshev function}
Chebyshev function has been used in many EMO algorithms such as MOEA/D \citep{Zhang2008}, is defined as follow:
\begin{align*}\label{Cheby}\tag{Cheby}
    s\left(\mathcal{F}\left(\mathbf{x}_{\mathbf{r}}\right),\mathbf{r}\right) = \underset{i=1,\dots,m}{\max}\left\{r_i\left|f_i\left(\mathbf{x}_{\mathbf{r}}\right)-z^{*}_i\right|\right\},
\end{align*}
where $\mathbf{x}_{\mathbf{r}} = h(\mathbf{r},\phi)$, $\mathbf{r}\in\left\{\mathbf{r} \in \mathbb{R}^m_{>0}: \sum_i^{m} r_i = 1\right\}$, $z^{*}_i$ is the reference point. In this study, we normalized the objective function values in the range $[0,1]$ before building the model. Therefore, $z^{*}_i$ is a value of zeros. 
\begin{theorem}\citep{miettinen2012nonlinear}
    If $\mathbf{x}^*\in X$ is Pareto optimal of Problem \eqref{MOP}, then there exists a preference vector $\mathbf{r} \left(r_i>0\right)$ such that $\mathbf{x}^*$ is a solution of Problem \eqref{Cheby}
\end{theorem}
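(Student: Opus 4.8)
The plan is to run the classical weighting argument for the weighted Chebyshev (Tchebycheff) scalarization: starting from the given Pareto optimal point $\mathbf{x}^*$, I would construct one explicit preference vector $\mathbf{r}$ that makes all the weighted deviations at $\mathbf{x}^*$ equal, and then argue by contradiction that no feasible point can do strictly better than $\mathbf{x}^*$ under that particular $\mathbf{r}$.

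First I would reduce to a form without absolute values. Since the objectives are rescaled to $[0,1]$ and $z^*_i = 0$, we have $f_i(\mathbf{x}) - z^*_i = f_i(\mathbf{x}) \ge 0$ for every $\mathbf{x}\in X$, so \eqref{Cheby} is just $s(\mathcal{F}(\mathbf{x}),\mathbf{r}) = \max_{i=1,\dots,m} r_i\bigl(f_i(\mathbf{x}) - z^*_i\bigr)$. If $f_i(\mathbf{x}^*) = z^*_i$ for some coordinate, I would first shift to a strict utopian reference point, i.e. pick $z^*_i < \min_{\mathbf{x}\in X} f_i(\mathbf{x})$; this changes neither the set of Pareto optimal points nor the structure of the argument, and it is the only point where some care is needed.

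Next I would set
\[
r_i = \frac{1/\bigl(f_i(\mathbf{x}^*)-z^*_i\bigr)}{\sum_{j=1}^{m} 1/\bigl(f_j(\mathbf{x}^*)-z^*_j\bigr)}, \qquad i = 1,\dots,m,
\]
so that $r_i > 0$, $\sum_i r_i = 1$, and $r_i\bigl(f_i(\mathbf{x}^*)-z^*_i\bigr) = \mu$ for every $i$, where $\mu = \bigl(\sum_j 1/(f_j(\mathbf{x}^*)-z^*_j)\bigr)^{-1}$; hence $s(\mathcal{F}(\mathbf{x}^*),\mathbf{r}) = \mu$. Then comes the contradiction step: if $\mathbf{x}^*$ were not a minimizer of \eqref{Cheby} for this $\mathbf{r}$, there would exist $\hat{\mathbf{x}}\in X$ with $\max_i r_i(f_i(\hat{\mathbf{x}})-z^*_i) < \mu$, so $r_i(f_i(\hat{\mathbf{x}})-z^*_i) < \mu = r_i(f_i(\mathbf{x}^*)-z^*_i)$ for all $i$; dividing by $r_i > 0$ yields $f_i(\hat{\mathbf{x}}) < f_i(\mathbf{x}^*)$ for every $i$, which contradicts the (weak) Pareto optimality of $\mathbf{x}^*$. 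Hence $\mathbf{x}^*$ solves \eqref{Cheby} for the constructed $\mathbf{r}$.

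The only genuine obstacle I foresee is the well-definedness of $\mathbf{r}$, namely guaranteeing $f_i(\mathbf{x}^*) - z^*_i > 0$ for all $i$, which is precisely what the utopian-point shift above takes care of. Everything else uses nothing beyond the definitions of Pareto optimality and of the Chebyshev function; in particular, convexity of $X$ and of the $f_i$ is not needed for this implication — it would only be relevant for the converse statement that Chebyshev minimizers are weakly efficient, which is not claimed here.
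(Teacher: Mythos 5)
The paper states this theorem purely as a citation to Miettinen and supplies no proof of its own; your argument is exactly the classical proof from that reference (equalize the weighted deviations at $\mathbf{x}^*$ by taking $r_i$ proportional to $1/(f_i(\mathbf{x}^*)-z^*_i)$, then derive a strict componentwise improvement from any point with a strictly smaller Chebyshev value, contradicting weak Pareto optimality), and it is correct, including the observation that no convexity is needed for this direction. Your one caveat is a genuine one worth flagging against the paper's setup: since the paper fixes $z^*=0$ after normalizing objectives to $[0,1]$, the case $f_i(\mathbf{x}^*)=z^*_i$ does occur at the extremes of the Pareto front and makes the weights undefined, which is precisely why the textbook version of this theorem is stated with a strictly utopian reference point, as in your shift.
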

Convexity of the MOO problem is needed in order to guarantee that every Pareto optimal solution can be found by Problem \eqref{Cheby} (see p.81 of \citep{sawaragi1985theory}. 

Now, we will show that $s\left(\mathcal{F}\left(\mathbf{x}\right),\mathbf{r}\right)$ is both a completed scalarization function and a pseudoconvex function on $X$. Firstly, $f_i$ is a convex function, we have that $s$ is also a convex function on $X$ \citep{nguyen2014cac}. Hence, we imply that $s\left(\mathcal{F}\left(\mathbf{x}\right),\mathbf{r}\right)$ is a pseudoconvex function on $X$. Besides, $s$ is monotonically increasing function with $\mathbf{r}>0$. Moreover, by Proposition 5.22 \citep{dinh2005generalized}, \eqref{Cheby} is a completed scalarization function. Applying Theorem \ref{theorem3.1}, we confirmed that all the Pareto optimal solutions of Problem \eqref{MOP} could be found by Problem \eqref{Cheby}.

\subsection{Preference-Based Inverse Utility function}
Sometimes the term utility function is used instead of the value function, which is often assumed that the decision-maker makes decisions based on an underlying function of some kind.
\begin{definition}\citep{miettinen2012nonlinear}
    A utility function $U:\mathbb{R}^m\to\mathbb{R}$ representing the preferences of the decision maker among the objective vectors is called a value function or utility function.
\end{definition}
Our inverse utility function is defined as:
\begin{align*}\label{Utility}\tag{Utility}
    s\left(\mathcal{F}\left(\mathbf{x}_{\mathbf{r}}\right),\mathbf{r}\right) = \dfrac{1}{\Pi_{i=1}^m\left(u_i-f_i\left(\mathbf{x}_{\mathbf{r}}\right)\right)^{r_i}},
\end{align*}
where $\mathbf{x}_{\mathbf{r}} = h(\mathbf{r},\phi)$, $\mathbf{r}\in \left\{\mathbf{r} \in \mathbb{R}^m_{>0}: \sum_i^{m} r_i = 1\right\}$, $u_i$ is an upper bound of $f_i$.
\begin{theorem}\citep{miettinen2012nonlinear}\label{theorem5.3}
    Let the utility function $U:\mathbb{R}^m\to\mathbb{R}$ be strongly decreasing. Let $U$ attain its maximum at $\mathbf{x}^*\in X$. Therefore, $\mathbf{x}^*$ is Pareto optimal solution of Problem \eqref{MOP}.
\end{theorem}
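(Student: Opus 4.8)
The plan is to establish the statement by contradiction, using nothing more than the ``strongly decreasing'' hypothesis on $U$ together with the definition of a Pareto optimal solution; no convexity or differentiability of $\mathcal{F}$ is needed here.

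First I would suppose, for contradiction, that $\mathbf{x}^*$ is \emph{not} Pareto optimal for Problem \eqref{MOP}. By the definition of a Pareto optimal (efficient) solution, there then exists $\mathbf{x}'\in X$ whose image dominates that of $\mathbf{x}^*$; that is, $f_i(\mathbf{x}')\le f_i(\mathbf{x}^*)$ for every $i=1,\dots,m$, with $f_j(\mathbf{x}')<f_j(\mathbf{x}^*)$ for at least one index $j$. In the notation of the paper this is exactly the dominance relation $\mathcal{F}(\mathbf{x}')\prec\mathcal{F}(\mathbf{x}^*)$.

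Next I would invoke the ``strongly decreasing'' property of $U$ directly: this property says precisely that whenever $\mathbf{a}\le\mathbf{b}$ componentwise and $\mathbf{a}\ne\mathbf{b}$, one has $U(\mathbf{a})>U(\mathbf{b})$. Applying this with $\mathbf{a}=\mathcal{F}(\mathbf{x}')$ and $\mathbf{b}=\mathcal{F}(\mathbf{x}^*)$ gives $U(\mathcal{F}(\mathbf{x}'))>U(\mathcal{F}(\mathbf{x}^*))$. This contradicts the hypothesis that $U$ attains its maximum over $X$ at $\mathbf{x}^*$, i.e. that $U(\mathcal{F}(\mathbf{x}^*))\ge U(\mathcal{F}(\mathbf{x}))$ for all $\mathbf{x}\in X$. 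Hence no such $\mathbf{x}'$ can exist, and $\mathbf{x}^*$ is Pareto optimal, which completes the argument.

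There is essentially no deep obstacle; the only points that need care are (i) matching the exact form of the ``strongly decreasing'' hypothesis to the dominance relation, so that a strict decrease in a \emph{single} coordinate already forces a strict increase of $U$ (this is what distinguishes ``strongly decreasing'' from merely ``decreasing''), and (ii) for the concrete inverse utility $s(\mathcal{F}(\mathbf{x}),\mathbf{r})=1/\prod_{i=1}^m (u_i-f_i(\mathbf{x}))^{r_i}$ to fall under this theorem, checking that $U(\mathbf{y})=\prod_{i=1}^m (u_i-y_i)^{r_i}$ is genuinely strongly decreasing on the relevant region, which requires $u_i-f_i(\mathbf{x})>0$ strictly on $X$ and $r_i>0$ for all $i$; I would record this strict positivity of $u_i-f_i$ as a standing assumption so that minimizing $s$ is equivalent to maximizing the strongly decreasing $U$ and the conclusion transfers.
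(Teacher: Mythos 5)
Your argument is correct and is the standard one: the paper itself states this result as a citation to Miettinen's textbook and provides no proof, and your contradiction argument (a dominating point $\mathbf{x}'$ would force $U(\mathcal{F}(\mathbf{x}'))>U(\mathcal{F}(\mathbf{x}^*))$ by strong decreasingness, contradicting maximality) is exactly the textbook proof. Your care in distinguishing \emph{strongly} decreasing (strict increase of $U$ under a strict decrease in even one coordinate) from merely decreasing, and in noting the standing assumption $u_i-f_i(\mathbf{x})>0$, $r_i>0$ needed for the paper's concrete inverse utility to fall under the theorem, matches what the paper implicitly relies on.
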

Let $U = \Pi_{i=1}^m\left(u_i-f_i\left(\mathbf{x}_{\mathbf{r}}\right)\right)^{r_i}$ is well known Cobb–Douglas production function \citep{cobb1928theory}. Simple to verify $U$ is strongly decreasing function with $r_i>0$ and $u_i-f_i\left(\mathbf{x}_{\mathbf{r}}\right)>0, i=1,\dots,m$. Then, $s=\dfrac{1}{U}$ is an increasing function. Hence, from Theorem \ref{theorem5.3}, all the Pareto optimal solutions of Problem \eqref{MOP} can be found by Problem \eqref{Utility}.

Indeed, with $u_i$ calculated using the technique of \citep{benson1998outer}, by Corollary 5.18 \citep{avriel2010generalized}, we acquire $s$ is a convex function and also a pseudoconvex function on $X$. Therefore, \eqref{Utility} is a completed scalarization function. The proposed function $s$ has significant implications for economics. With assuming that we need to minimize the cost funtion $f_i$ to maximize the profit $\Pi_{i=1}^m\left(u_i-f_i\left(\mathbf{x}_{\mathbf{r}}\right)\right)^{r_i}$.
\section{Application of Controllable Pareto Front Learning in Multi-task Learning}
\subsection{Multi-task Learning as Multi-objectives optimization.}
In machine learning, Multi-task learning (MTL) is a part of Meta-Objectives, one of three main fields of Meta-Learning. The application of Multi-task learning is successful in computer vision \citep{bilen2016integrated, misra2016cross, anh2022multi}, in natural languages \citep{dong2015multi, hashimoto2016joint} and recommender system \citep{le2020stochastically, le2021efficient}. 
In the early days, MTL algorithms often are heuristic in order to dynamically balance the loss terms based on gradient magnitude \citep{chen2018gradnorm}, the rate of change in losses \citep{liu2019end}, task uncertainty \citep{kendall2018multi}. Hence, \citep{sener2018multi} formulated MTL as a MOO and proposed using MDGA \citep{desideri2012multiple} for MTL. 

Denotes a supervised dataset $\left(\mathbf{x},\mathbf{y}\right)=\left\{\left(x_j,y_j\right)\right\}_{j=1}^N$  where $N$ is the number of data points. They specified the MOO formulation of Multi-task learning from the empirical loss $\mathcal{L}^i(\mathbf{y},g(\mathbf{x},\boldsymbol{\theta}))$ using a vector-valued loss $\mathcal{L}$:
\begin{align*}
    \boldsymbol{\theta} &= \argmin_{\boldsymbol{\theta}} \mathcal{L}\left(\mathbf{y},g\left(\mathbf{x},\boldsymbol{\theta}\right)\right), \\
    \mathcal{L}\left(\mathbf{y}, g\left(\mathbf{x}, \boldsymbol{\theta}\right)\right) &= \left(\mathcal{L}_1\left(\mathbf{y},g\left(\mathbf{x},\boldsymbol{\theta}\right)\right),\dots,\mathcal{L}_m\left(\mathbf{y},g\left(\mathbf{x},\boldsymbol{\theta}\right)\right)\right)^T
\end{align*}
where $g\left(\mathbf{x}; \boldsymbol{\theta} \right): \mathcal{X}\times\Theta \rightarrow \mathcal{Y}$ represents to a Target network with parameters $\boldsymbol{\theta}$. 
\begin{definition}[Dominance] A solution $\boldsymbol{\theta}^a$ dominates $\boldsymbol{\theta}^b$ if $\mathcal{L}_i\left(\mathbf{y}, g\left(\mathbf{x}, \boldsymbol{\theta}^a\right)\right) \leq \mathcal{L}_i\left(\mathbf{y}, g\left(\mathbf{x}, \boldsymbol{\theta}^b\right)\right) \forall i \in \{1,\dots,m\}$ and $\mathcal{L}\left(\mathbf{y}, g\left(\mathbf{x}, \boldsymbol{\theta}^a\right)\right) \neq \mathcal{L}\left(\mathbf{y}, g\left(\mathbf{x}, \boldsymbol{\theta}^b\right)\right)$. Denote $\mathcal{L}(\mathbf{y}, g(\mathbf{x}, \boldsymbol{\theta}^a)) \prec \mathcal{L}(\mathbf{y}, g(\mathbf{x}, \boldsymbol{\theta}^b))$ or $\mathcal{L}^a \prec \mathcal{L}^b$.
\end{definition}
\begin{definition}[Pareto optimal solution] A solution $\boldsymbol{\theta}^a$ is called Pareto optimal solution if $\nexists \boldsymbol{\theta}^b: \mathcal{L}^b \prec \mathcal{L}^a$.
\end{definition}
\begin{definition}[Pareto front] The set of Pareto optimal solutions is Pareto set, denoted by $P_s$, and the corresponding images in objectives space are Pareto front, denoted by  $P_f=\mathcal{L}\left(\mathbf{y}, g\left(\mathbf{x}, \Theta\right)\right)$.
\end{definition}
\subsection{Controllable Pareto Front Learning in Multi-task Learning.}
\begin{algorithm}[ht]
\KwIn{Supervised dataset $(\boldsymbol{x}, \boldsymbol{y})$}
\KwOut{$\phi^*$}

\While{not converged}{
$\mathbf{r}\sim \text{Dirichlet}(\alpha)$

Sample mini-batch $\left(x_1, {y}_1\right),\dots,\left(x_B,{y}_B\right)$

$\boldsymbol{\theta}_{\mathbf{r}} = h(\mathbf{r}, \phi)$

$\mathcal{L}(\boldsymbol{\theta}_{\boldsymbol{r}}) = \left[\sum_{i=1}^B\mathcal{L}^j\left(y_i,g\left(x_i,\boldsymbol{\theta}_{\mathbf{r}}\right)\right)\right]_{j=1}^m$ 

$\phi = \phi - \xi\nabla_{\phi}s\left(\mathcal{L}\left(\boldsymbol{\theta}_{\mathbf{r}}\right),\mathbf{r}\right)$
}
$\phi^* = \phi$
\caption{: Hypernetwork for Multi-task Learning.}
\end{algorithm}

Pareto front learning in Multi-task Learning by solving the following:
\begin{align*}
    & \phi^* = \argmin_{\phi} \underset{\mathbf{r} \sim p_{\mathcal{P}},(\mathbf{x}, \mathbf{y}) \sim p_D}{\mathbb{E}} s(\mathcal{L}(\mathbf{y},g(\mathbf{x}, \boldsymbol{\theta}_{\mathbf{r}}), \mathbf{r}) \\
    & \text{ s.t. } \boldsymbol{\theta}_{\mathbf{r}} = h(\mathbf{r}, \phi^*),  h(\mathcal{P}, \phi^*) = P_s,
\end{align*}
where $h: \mathcal{P} \times \Phi \rightarrow \Theta$ represents to a Hypernetwork, random variable $\mathbf{r}$ is preference vector which formulate trade-off between objective functions, $p_{\mathcal{P}}$ is a random distribution on $\mathcal{P}$.

\section{Computational experiments}
The code is implemented in Python language programming and Pytorch framework \citep{paszke2019pytorch}. We compare the performance of our method with the baseline methods: PHN-LS and PHN-EPO \citep{navon2020learning}. Due to the page limitation, we mention the setting details and additional experiments in Appendix. Our source code is available at \revise{\url{https://github.com/tuantran23012000/PHN-CSF.git}}.
\subsection{Evaluation metrics}
\revise{\textbf{Mean Euclid Distance (MED).} To evaluate how error the obtained Pareto optimal solutions by the hypernetwork $\mathcal{F}^*=\{\mathcal{F}^*_1,\dots,\mathcal{F}^*_{|\mathcal{F}^*|}\}$ to the Pareto optimal solutions on the Pareto front $\hat{\mathcal{F}}=\{\hat{\mathcal{F}}_1,\dots,\hat{\mathcal{F}}_{|\hat{\mathcal{F}}|}\}$ corresponding to reference vector set $\{\mathbf{r} \in \mathbb{R}^m_{>0}: \sum_i^{m} r_i = 1\}$. Then the performance indicator is defined as:
\begin{align*}
    MED(\mathcal{F}^*,\hat{\mathcal{F}}) = \dfrac{1}{|\mathcal{F}^*|}\left(\sum_{i=1}^{|\mathcal{F}^*|}\left\Vert \mathcal{F}^*_i - \hat{\mathcal{F}}_i\right\Vert_2\right).
\end{align*}
}
\revise{In contrast to the IGD and GD metrics (mentioned by \citep{li2019quality}, which measure the minimized distance between a point in the reference points and the Pareto solution set. Our MED calculates the distance between pairings of predictions and targets in the set of predicted and corresponding Pareto optimal solutions.} 

\textbf{Hypervolume (HV).} Hypervolume \citep{zitzler1999multiobjective} is the area dominated by the Pareto front. Therefore the quality of a Pareto front is proportional to its hypervolume. Given a set of $k$ points $\mathcal{M} = \{m^j | m^j \in \mathbb{R}^m; j=1,\dots, k\}$ and a reference point $\rho\in\mathbb{R}^m_{+}$ \revise{(Appendix \ref{sub:hv} provides the way to choose a reference point)}, the Hypervolume of $\mathcal{S}$ is measured by the region of non-dominated points bounded above by $m \in \mathcal{M}$, then the hypervolume metric is defined as follows:
\begin{align*}
    HV(S) = VOL\left(\underset{m \in \mathcal{M}, m \prec \rho}{\bigcup}\displaystyle{\Pi_{i=1}^m}\left[m_i,\rho_i\right]\right)
\end{align*}
Moreover, we also adopt metrics to evaluate the model performance, including mean accuracy, precision, recall, and f1 score in Multi-label classification.

\textbf{Mean Accuracy score (mA).}
\begin{align*}
    mA = \dfrac{1}{K}\dfrac{1}{M}\displaystyle\sum^{K}\sum_{j=1}^M\dfrac{1}{2}\left(\dfrac{TP^j}{TP^j+FN^j} + \dfrac{TN^j}{TN^j+FP^j}\right)
\end{align*}

\textbf{Precision score (Pre).}
\begin{align*}
    Pre = \dfrac{1}{K}\dfrac{1}{M}\displaystyle\sum^{K}\sum_{j=1}^M\dfrac{TP^j}{TP^j+FP^j}
\end{align*}

\textbf{Recall score (Recall).}
\begin{align*}
    Recall = \dfrac{1}{K}\dfrac{1}{M}\displaystyle\sum^{K}\sum_{j=1}^M\dfrac{TP^j}{TP^j+FN^j}
\end{align*}

\textbf{F1 score (F1).}
\begin{align*}
    F1 = \dfrac{1}{K}\dfrac{1}{M}\displaystyle\sum^{K}\sum_{j=1}^M\dfrac{2*Pre*Recall}{Pre+Recall}
\end{align*}
 where $K$ is the number of test preference vectors, $M$ is the number of labels. 

\subsection{MOO problems}
In the following, we investigate Controllable Pareto Front Learning methods in several convex MOO problems, in which objective functions are convex and constraint space is a convex set \revise{(Non-Convex MOO experiments are provided in the Appendix \ref{sec:non-convex})}. In the optimization process, we sample use Algorithm \ref{alg:hypermoo} with $20000$ iterations and compute mean, variation of MED score by 30 executions as the main metric the evaluate the methods.

\noindent \textbf{Example 7.1:}
\begin{align*}
\min & \left\{x,(x-1)^2\right\}\\
\text{s.t. } & 0\le x\le 1.
\end{align*}

\noindent \textbf{Example 7.2} \label{ex7.2} \citep{binh1997mobes}:
\begin{align*}
    \min & \left\{f_1,f_2\right\}\\
\text{s.t. } & x_i\in[0,5], i=1,2
\end{align*}
where \begin{align*}
    f_1 = \dfrac{x_1^2 + x_2^2}{50}, \textbf{ } f_2 = \dfrac{(x_1-5)^2 + (x_2-5)^2}{50}.
\end{align*}

\noindent \textbf{Example 7.3}\label{ex7.3} \citep{thang2020monotonic}:
\begin{align*}
    \min & \left\{f_1,f_2,f_3\right\}\\
\text{s.t. } & x_{1}^2+x_{2}^2+x_3^2 = 1\\
& x_i\in[0,1], i=1,2,3
\end{align*}
where \begin{align*}
    & f_1 = \dfrac{x_1^2 + x_2^2 + x_3^2 +x_2 - 12x_3 + 12}{14},\\
    & f_2 = \dfrac{x_1^2 + x_2^2 + x_3^2 + 8x_1 - 44.8x_2 + 8x_3 + 44}{57},\\
    & f_3 = \dfrac{x_1^2 + x_2^2 + x_3^2 - 44.8x_1 + 8x_2 + 8x_3 + 43.7}{56}.
\end{align*}

\begin{table}[ht]%
\caption{MED score of methods in example 7.1. }
\label{tb1}
\centering
\resizebox{0.5\textwidth}{!}{\begin{tabular}{|c|c|c|c|c|}
\toprule
& \multicolumn{1}{c|}{PHN-EPO} &\multicolumn{1}{c|}{PHN-LS} & \multicolumn{1}{c|}{PHN-Cheby} & \multicolumn{1}{c|}{PHN-Utility} \\
\cmidrule(lr){2-3}\cmidrule(lr){3-4}\cmidrule(lr){4-5}
rays & MED $\Downarrow$& MED$\Downarrow$& MED$\Downarrow$ & MED$\Downarrow$   \\ 
\midrule 
5 & $0.0082\pm  0.0023$ & $0.0035\pm  0.0028$ & $0.0087\pm  0.0013$ & $\bf 0.0024\pm  0.0007$ \\  
10 & $0.0087\pm  0.0013$ & $0.0043\pm  0.0026$ &$0.0086\pm  0.0009$ & $\bf 0.0025\pm  0.0006$\\
50 & $0.0088\pm  0.0006$ & $0.0042\pm  0.0008$ &$0.0084\pm  0.0005$ & $\bf 0.0025\pm  0.0003$\\
100 & $0.0087\pm  0.0004$ & $0.0038\pm  0.0003$ &$0.0084\pm  0.0005$ & $\bf 0.0025\pm  0.0002$\\
300 & $0.0086\pm  0.0003$ & $0.0041\pm  0.0004$ &$0.0086\pm  0.0002$ & $\bf 0.0025\pm  0.0001$\\
600 & $0.0086\pm  0.0001$ & $0.0041\pm  0.0002$ &$0.0085\pm  0.0001$ & $\bf 0.0024\pm  0.0001$\\
\bottomrule
\end{tabular}}
\end{table}


\begin{table}[ht]%
\caption{MED score of methods in example 7.2. }
\label{tb2}
\centering
\resizebox{0.5\textwidth}{!}{\begin{tabular}{|c|c|c|c|c|}
\toprule
& \multicolumn{1}{c|}{PHN-EPO} &\multicolumn{1}{c|}{PHN-LS} & \multicolumn{1}{c|}{PHN-Cheby} & \multicolumn{1}{c|}{PHN-Utility} \\
\cmidrule(lr){2-3}\cmidrule(lr){3-4}\cmidrule(lr){4-5}
rays & MED $\Downarrow$& MED$\Downarrow$& MED$\Downarrow$ & MED$\Downarrow$   \\ 
\midrule
5 & $0.0017\pm  0.0005$ & $0.0017\pm  0.0006$ & $0.0018\pm  0.0005$ & $\bf 0.0013\pm  0.0005$ \\  
10 & $0.0017\pm  0.0004$ & $0.0018\pm  0.0004$ &$0.0020\pm  0.0003$ & $\bf 0.0014\pm  0.0004$\\
50 & $0.0017\pm  0.0002$ & $0.0017\pm  0.0002$ &$0.0019\pm  0.0002$ & $\bf 0.0013\pm  0.0001$\\
100 & $0.0018\pm  0.0001$ & $0.0018\pm  0.0001$ &$0.0019\pm  0.0001$ & $\bf 0.0014\pm  0.0001$\\
300 & $0.0018\pm  0.0001$ & $0.0018\pm  0.0001$ &$0.0019\pm  0.0001$ & $\bf 0.0014\pm  0.0001$\\
600 & $0.0018\pm  0.0001$ & $0.0018\pm  0.0001$ &$0.0019\pm  0.0001$ & $\bf 0.0014\pm  0.0001$\\
\bottomrule
\end{tabular}}
\end{table}

\begin{table}[ht]%
\footnotesize
\caption{MED score of methods in example 7.3. }
\label{tb3}
\centering
\resizebox{0.5\textwidth}{!}{\begin{tabular}{|c|c|c|c|c|}
\toprule
& \multicolumn{1}{c|}{PHN-EPO} &\multicolumn{1}{c|}{PHN-LS} & \multicolumn{1}{c|}{PHN-Cheby} & \multicolumn{1}{c|}{PHN-Utility} \\
\cmidrule(lr){2-3}\cmidrule(lr){3-4}\cmidrule(lr){4-5}
rays & MED $\Downarrow$& MED$\Downarrow$& MED$\Downarrow$ & MED$\Downarrow$   \\ 
\midrule
5 & $0.0828\pm  0.0162$ & $0.0531\pm  0.0137$ & $0.0397\pm  0.0117$ & $\bf 0.0244\pm  0.0054$ \\  
10 & $0.0809\pm  0.0105$ & $0.0490\pm  0.0102$ &$0.0396\pm  0.0090$ & $\bf 0.0238\pm  0.0040$\\
50 & $0.0808\pm  0.0052$ & $0.0494\pm  0.0043$ &$0.0395\pm  0.0036$ & $\bf 0.0201\pm  0.0022$\\
100 & $0.0824\pm  0.0041$ & $0.0485\pm  0.0035$ &$0.0395\pm  0.0022$ & $\bf 0.0244\pm  0.0013$\\
300 & $0.0819\pm  0.0027$ & $0.0483\pm  0.0015$ &$0.0396\pm  0.0011$ & $\bf 0.0246\pm  0.0008$\\
600 & $0.0838\pm  0.0020$ & $0.0483\pm  0.0011$ &$0.0394\pm  0.0009$ & $\bf 0.0243\pm  0.0004$\\
\bottomrule
\end{tabular}}
\end{table}

\begin{figure*}[ht]
     \centering
        \begin{subfigure}[b]{0.24\textwidth}
         \centering
         \includegraphics[width=\textwidth]{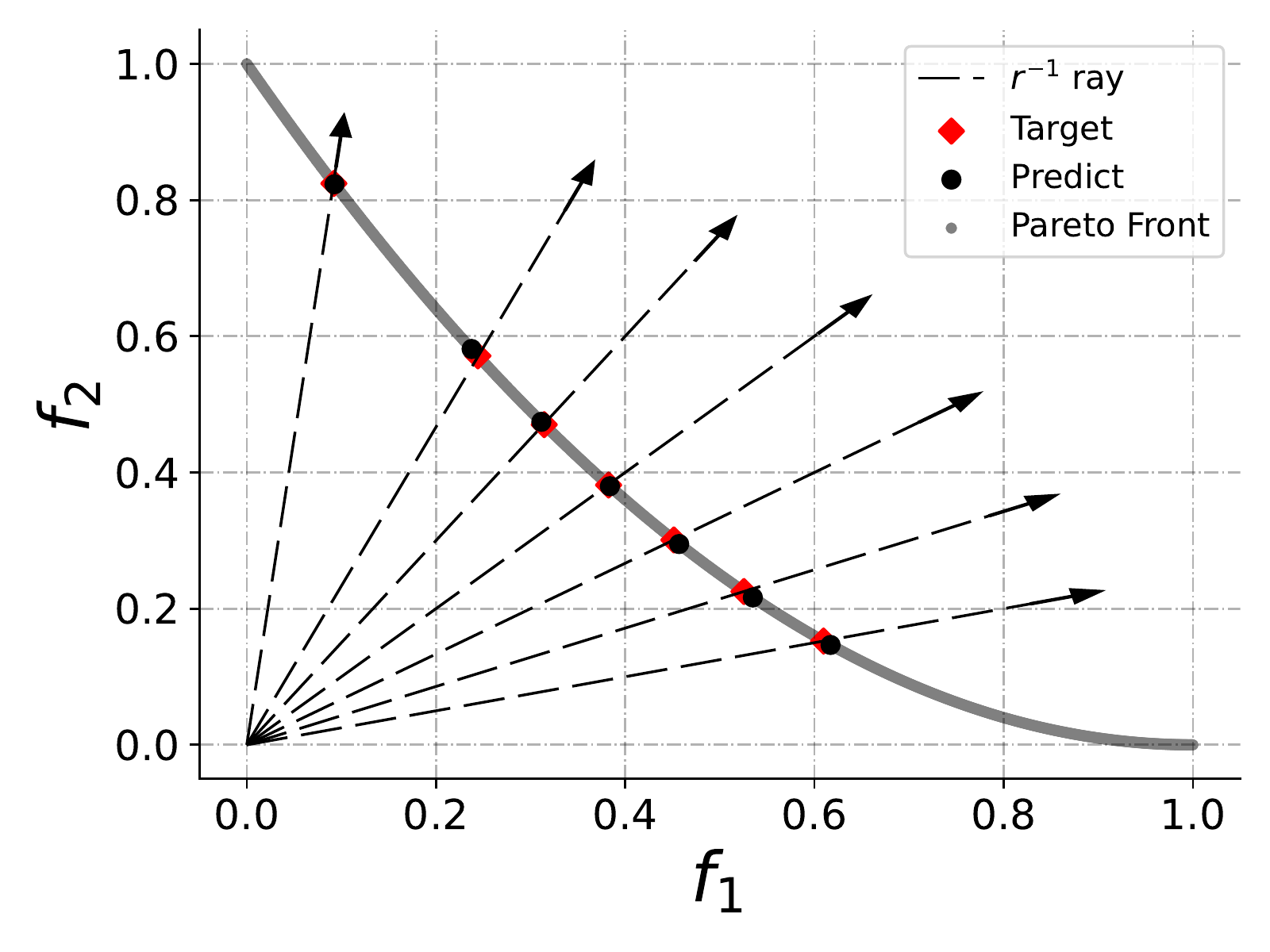}
     \end{subfigure}
     \hfill
     \begin{subfigure}[b]{0.24\textwidth}
         \centering
         \includegraphics[width=\textwidth]{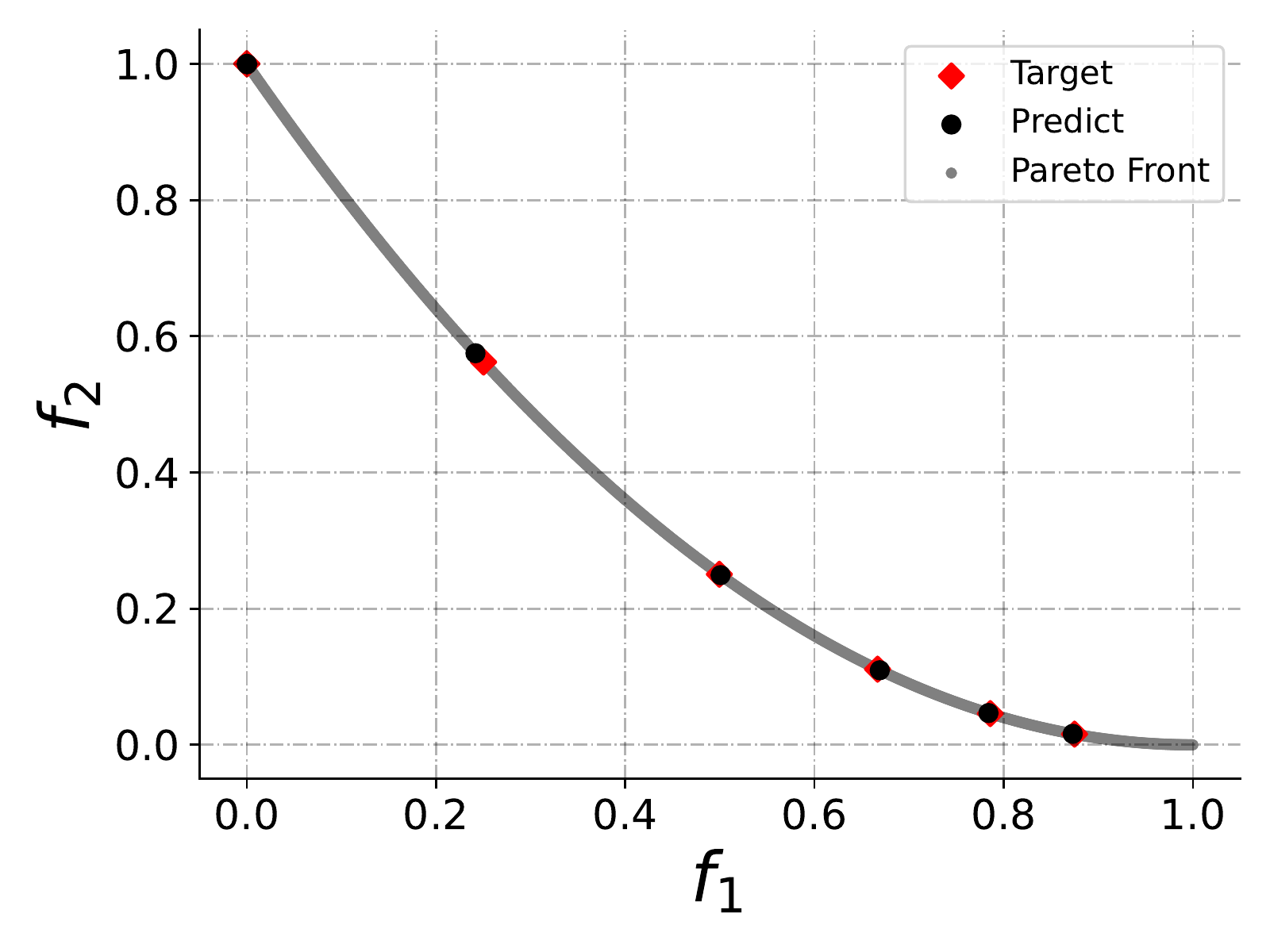}
     \end{subfigure}
     \hfill
     \begin{subfigure}[b]{0.24\textwidth}
         \centering
         \includegraphics[width=\textwidth]{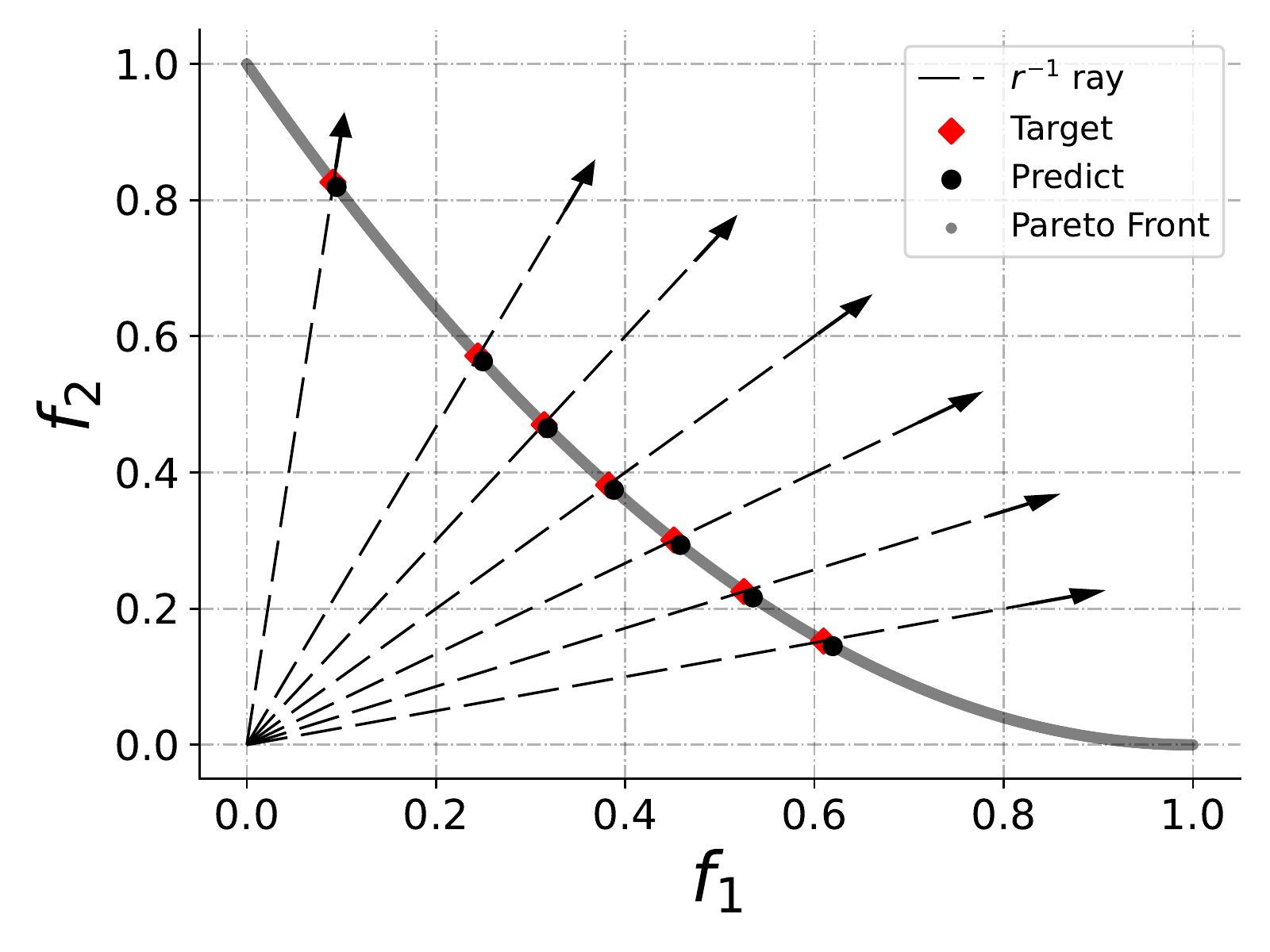}
     \end{subfigure}
     \hfill
     \begin{subfigure}[b]{0.24\textwidth}
         \centering
         \includegraphics[width=\textwidth]{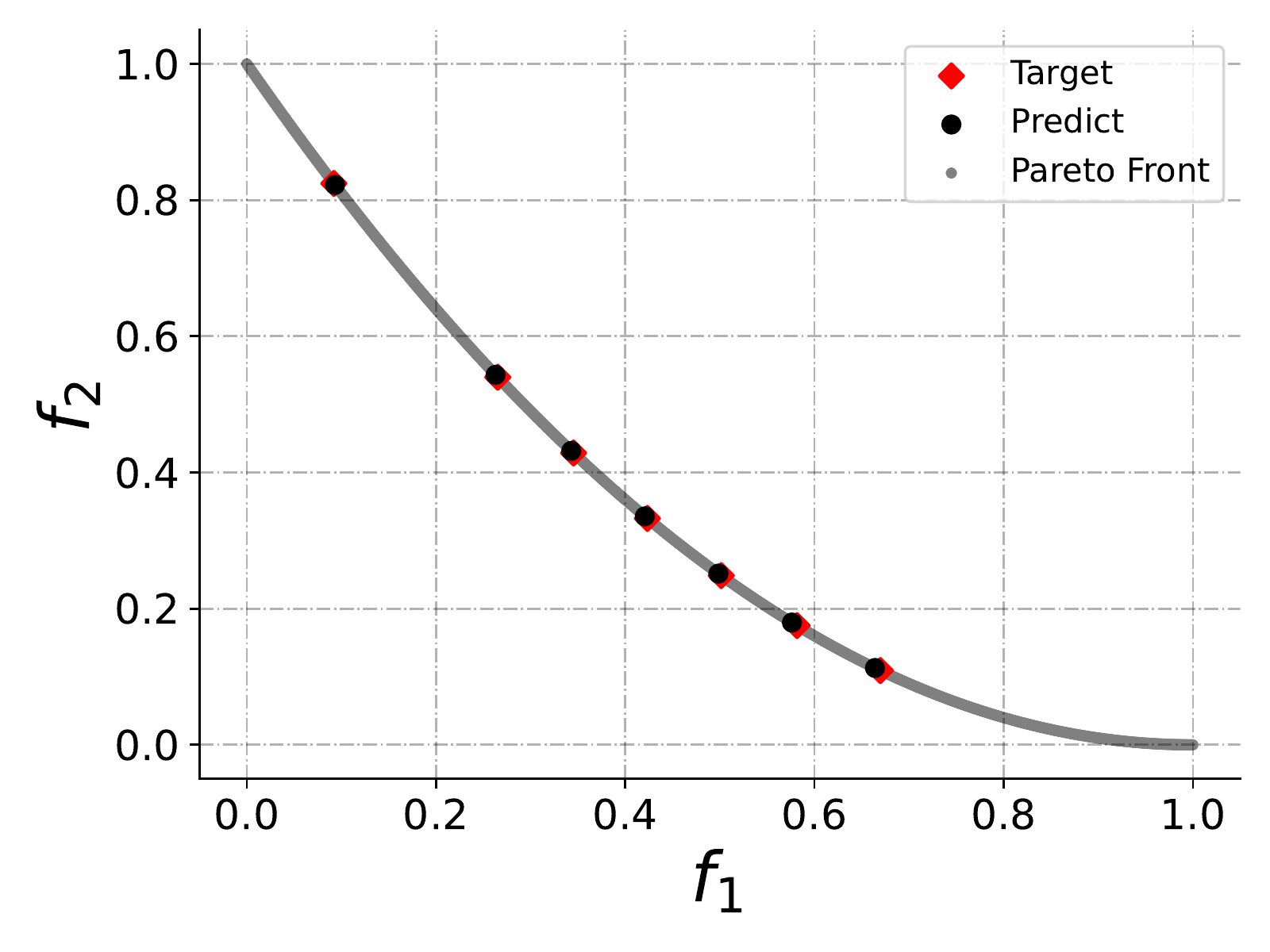}
     \end{subfigure}

     \centering
     \begin{subfigure}[b]{0.24\textwidth}
         \centering
         \includegraphics[width=\textwidth]{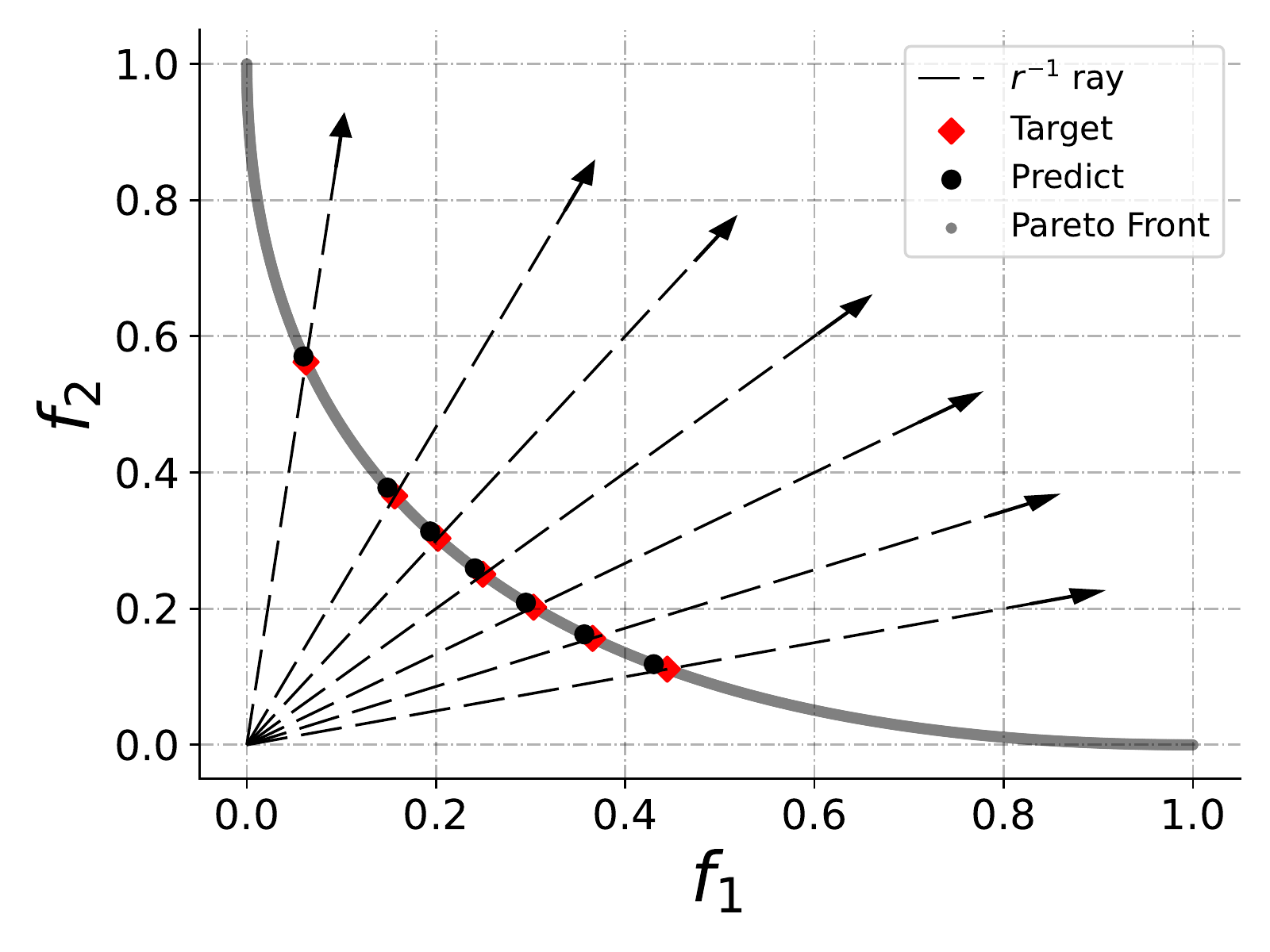}
     \end{subfigure}
     \hfill
     \begin{subfigure}[b]{0.24\textwidth}
         \centering
         \includegraphics[width=\textwidth]{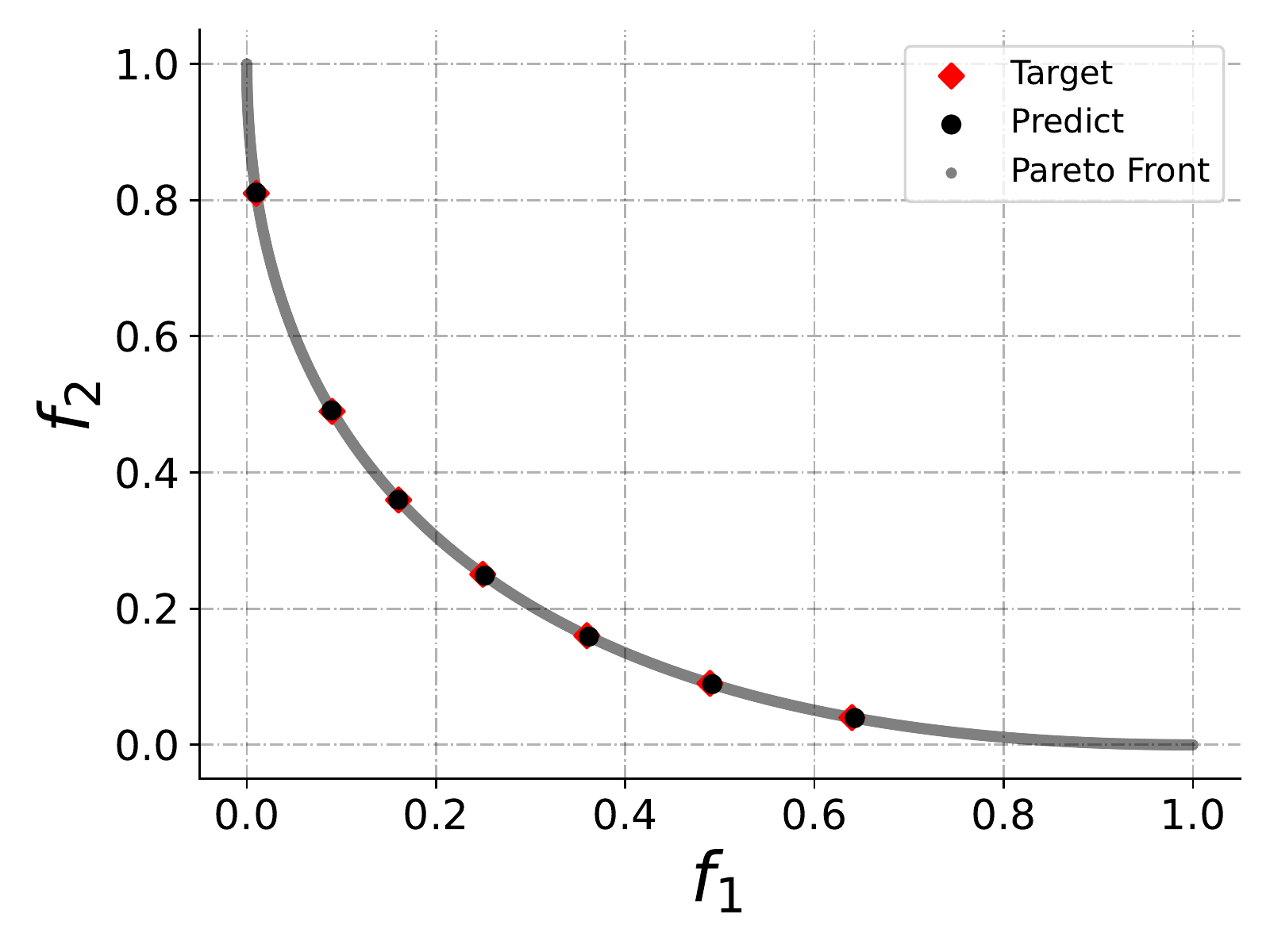}
     \end{subfigure}
     \hfill
     \begin{subfigure}[b]{0.24\textwidth}
         \centering
         \includegraphics[width=\textwidth]{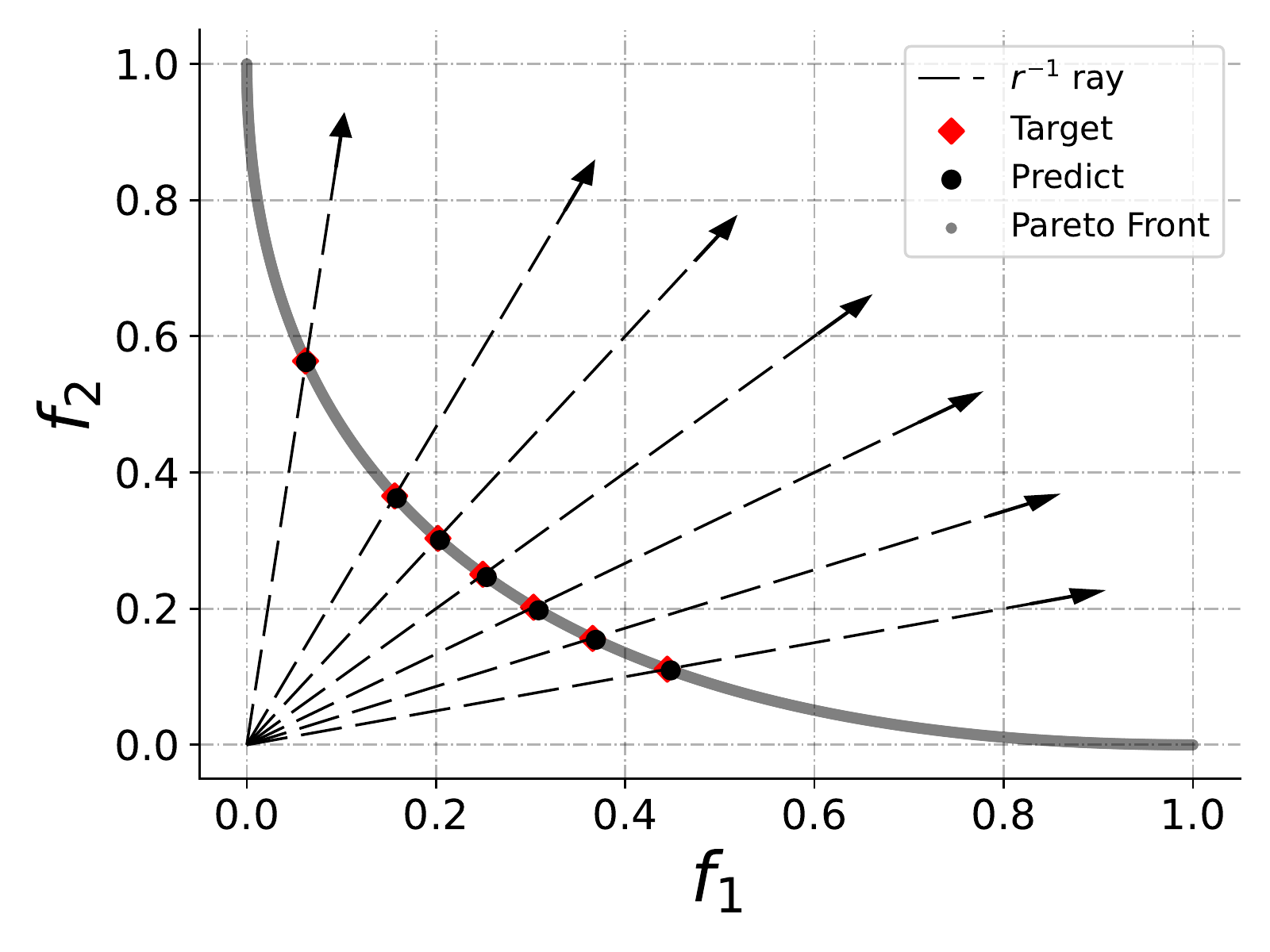}
     \end{subfigure}
     \hfill
     \begin{subfigure}[b]{0.24\textwidth}
         \centering
         \includegraphics[width=\textwidth]{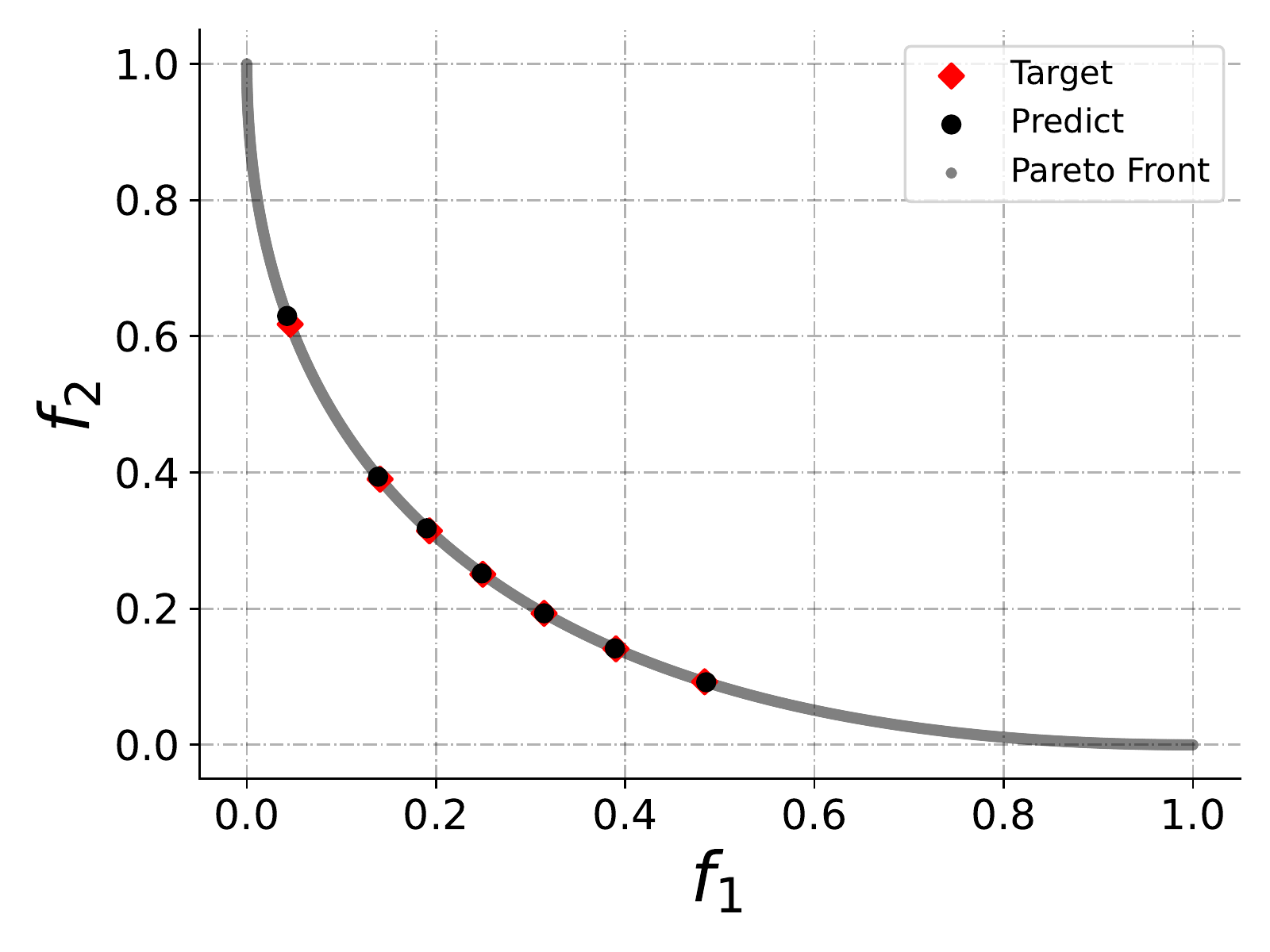}
     \end{subfigure}

    \begin{subfigure}[b]{0.24\textwidth}
         \centering
         \includegraphics[width=\textwidth]{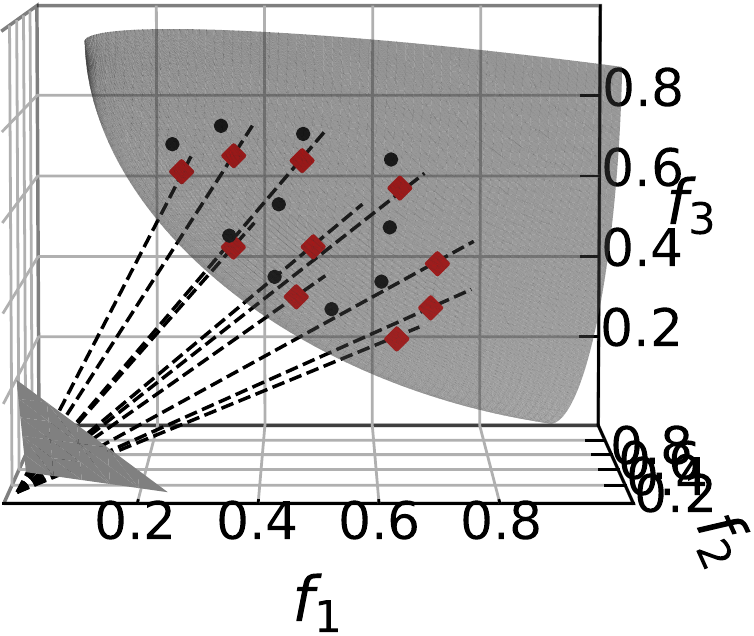}
         \caption{PHN-EPO}
         \label{3c}
     \end{subfigure}
     \hfill
     \begin{subfigure}[b]{0.24\textwidth}
         \centering
         \includegraphics[width=\textwidth]{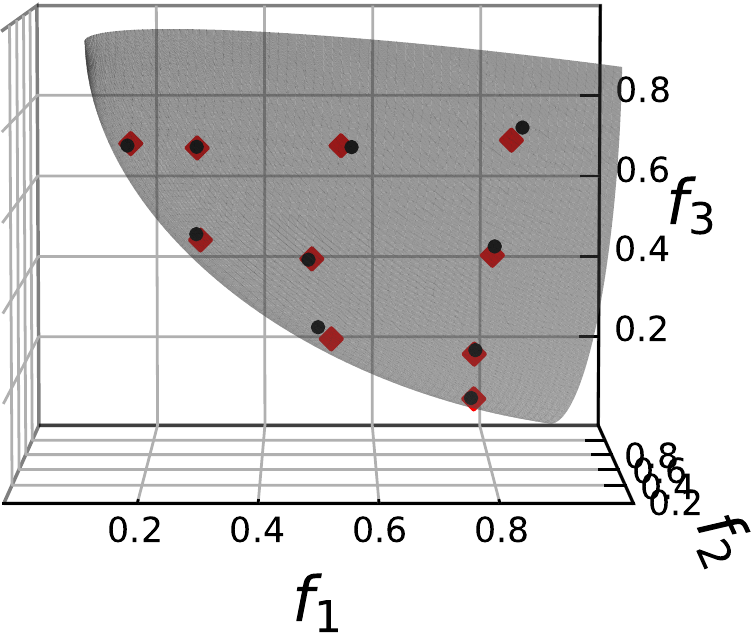}
         \caption{PHN-LS}
     \end{subfigure}
     \hfill
     \begin{subfigure}[b]{0.24\textwidth}
         \centering
         \includegraphics[width=\textwidth]{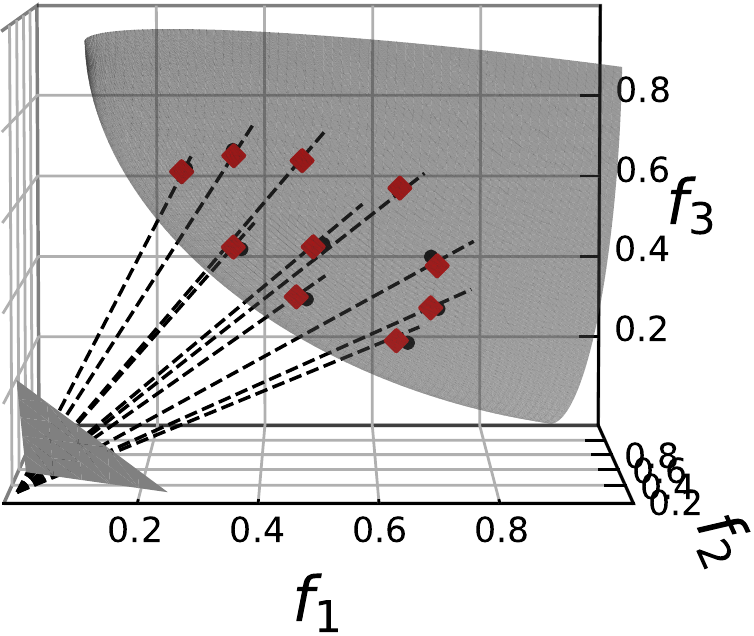}
         \caption{PHN-Cheby}
         \label{3b}
     \end{subfigure}
     \hfill
     \begin{subfigure}[b]{0.24\textwidth}
         \centering
         \includegraphics[width=\textwidth]{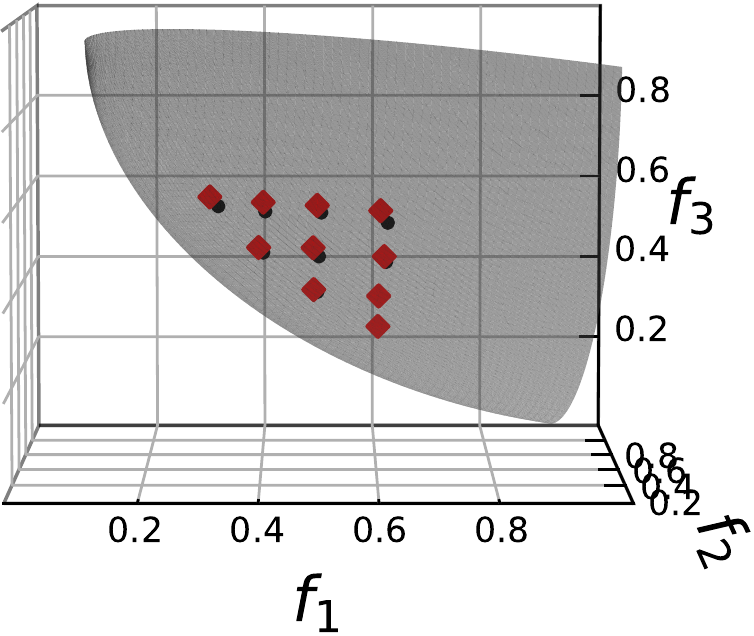}
          \caption{PHN-Utility}
     \end{subfigure}
      \caption{Output of Controllable Pareto Front Learning method in example 7.1 (top), example 7.2 (middle), and example 7.3 (bottom). The black dashed line represents the $\mathbf{r}^{-1}$ ray in Figures (a) and Figures (c). The \textcolor{red}{red} and \textcolor{black}{black} points are the target and predicted points respectively.}
      \label{fig3}
\end{figure*}






\subsection{Multi-task Learning problems}
On MTL problems, the dataset is split into three subsets: training, validation, and testing. The model with the highest HV in the validation set will be evaluated. All methods are evaluated with the same well-spread preference vectors based on \citep{das}.
\subsubsection{Image Classification.}

\begin{table}[htbp]
\caption{Testing hypervolume on Multi-MNIST, Multi-Fashion, and Multi-Fashion+MNIST datasets.}
\label{tb5}
\centering
\resizebox{0.5\textwidth}{!}{\begin{tabular}{| c | >{\centering\arraybackslash}m{1.cm} | >{\centering\arraybackslash}m{1.cm} | >{\centering\arraybackslash}m{1.cm} |>{\centering\arraybackslash}m{2.cm} | }
\toprule
                 &  {\bf Multi-MNIST} & {\bf Multi-Fashion} & {\bf Fashion-MNIST} &   \\ \midrule  

{\bf Method} & HV $\Uparrow$ & HV$\Uparrow$ & HV$\Uparrow$ &  {\bf \revise{Training-time (hour.)}} \\
\midrule
PHN-EPO  & 2.866  & 2.203  & 2.789 &  1.38 \\ 
PHN-LS  & 2.862   & \textbf{2.204}  & 2.768  &\bf 1.15  \\ 
\midrule
PHN-Cheby (\textbf{ours})  & 2.870   & 2.173 & \bf 2.807  & 1.45 \\
PHN-Utility  (\textbf{ours})  &  \bf 2.874  & 2.169  & 2.795 &  1.23 \\ 
\bottomrule
\end{tabular}}
\end{table}

With the image classification task, we utilized three benchmark datasets in Multi-task Learning, including Multi-MNIST \citep{sabour2017}, Multi-Fashion is generated similarly from the dataset \citep{xiao2017fashion}, and Multi-Fashion+MNIST \citep{lin2019pareto}. For each dataset, we have two tasks that ask us to categorize the top-left (task left) and bottom-right (task right) items (task right). Each dataset has 20,000 test set instances and 120,000 training examples; then, $10\%$ of the training data were split into the validation set. A result of the findings is shown in Table \ref{tb5}, we compare with PHN-EPO of \citep{navon2020learning}, and we evaluate over 25 preference vectors. The hypervolume's reference point is (2, 2).
\subsubsection{Scene Understanding}
\begin{table}[htbp]
\caption{Testing hypervolume on NYUv2 dataset.}
\label{tb4}
\centering
\resizebox{0.4\textwidth}{!}{\begin{tabular}{| c | >{\centering\arraybackslash}m{1.cm} |>{\centering\arraybackslash}m{2.cm} | }
\toprule
\multicolumn{3}{|c|}{NYUv2} \\ 
\midrule
{\bf Method} & HV $\Uparrow$& {\bf \revise{Training-time (hour.)}} \\
\midrule
PHN-EPO &  7.385 & 16.75 \\ 
PHN-LS &  9.204 & \bf 10.25  \\ 
\midrule
PHN-Cheby (\textbf{ours})  &  8.865 & 10.36 \\
PHN-Utility  (\textbf{ours})  & \bf 9.237 &  10.35 \\ 
\bottomrule
\end{tabular}}
\end{table}

The NYUv2 dataset \citep{silberman2012indoor} serves as the basis experiment for our method. This dataset is a collection of 1449 RGBD images of an indoor scene that have been densely labeled at the per-pixel level using 13 different classifications. We use this dataset as a 3 tasks MTL benchmark for normal surface prediction, depth estimation, and semantic segmentation. The results are presented in Table \ref{tb4} with (3, 3, 3) as hypervolume's reference point. Our method, PHN-Utility, achieves the best HV in NYUv2 dataset with a 10.25 hour Training-time training. We use W$\&$B \citep{wandb}, a tool used during the training process to efficiently keep tabs on experiments, modify and iterate on datasets, and assess model performance. Additional details about this experiment is available at Appendix and \url{https://api.wandb.ai/report/tuantran23012000/f0y7l0oi}.

\subsubsection{Multi-Output Regression.} 
We conduct experiments using the SARCOS dataset \citep{Sethu2000}  to illustrate the feasibility of our methods in high-dimensional space. The objective is to predict seven relevant joint torques from a 21-dimensional input space (7 tasks) (7 joint locations, seven joint velocities, seven joint accelerations). There are 4449 and 44484 examples in testing/training set. 10\% of the training data is used as a validation set. 

\begin{table}[ht]
\caption{Testing hypervolume on SARCOS dataset. }
\label{tb7}
\centering
\resizebox{0.4\textwidth}{!}{\begin{tabular}{| c | >{\centering\arraybackslash}m{1.cm} |>{\centering\arraybackslash}m{2.cm} | }
\toprule
\multicolumn{3}{|c|}{SARCOS} \\ 
\midrule
{\bf Method} & HV $\Uparrow$& {\bf \revise{Training-time (hour.)}} \\
\midrule
PHN-EPO &  0.855 & 1.73 \\ 
PHN-LS &  0.850 & 1.44  \\ 
\midrule
PHN-Cheby (\textbf{ours})  &  0.828 &  1.64\\
PHN-Utility  (\textbf{ours})  & 
\bf 0.856 &  \bf 1.42  \\ 
\bottomrule
\end{tabular}}
\end{table}

\subsubsection{Multi-Label Classification}
\begin{table}[ht]
\caption{Performance comparison of methods on the CelebA dataset.}
\label{tb6}
\centering
\resizebox{0.5\textwidth}{!}{\begin{tabular}{| c | >{\centering\arraybackslash}m{1.cm} | >{\centering\arraybackslash}m{1.cm} | >{\centering\arraybackslash}m{1.cm} |>{\centering\arraybackslash}m{1.cm} | >{\centering\arraybackslash}m{2.cm} |}
\toprule
\multicolumn{6}{|c|}{CelebA} \\ 
\midrule
{\bf Method} & mA  & Pre & Recall &  F1 & \bf \revise{Training-time (hour.)}\\
\midrule
PHN-EPO & 83.35  &  93.43 & 71.75 & 81.17 & 66.24\\ 
PHN-LS &  \bf 84.85 & 93.25 & \bf 75.16 & \bf 83.23 & 7.44\\ 
\midrule
PHN-Cheby (\textbf{ours})    & 81.78 & \bf 93.88 & 68.01 & 78.87 & \bf 7.20\\
PHN-Utility  (\textbf{ours})    &84.46  & 93.71&  73.87 & 82.62 & 7.44\\ 
\bottomrule
\end{tabular}}
\end{table}

Continually investigate Controllable Pareto Front Learning in large scale problem, we solve the problem of recognizing 40 facial attributes (40 tasks) in 200K face images on CelebA dataset \citep{liu2015deep} using a big Target network: Resnet18 (11M parameters) of \citep{he2016deep}. Due to very high dimensional scale (40 dimension), we sample randomly 1000 testing preference rays and evaluate methods by \textbf{mA, Pre, Recall, and F1 score} in Table \ref{tb6}. Additional details about this experiment is available at Appendix and \url{https://api.wandb.ai/report/tuantran23012000/xn2gxdwu}.

\subsection{Computational Analysis}
Experiments on \textbf{MOO problems} restated the efficacy of exact mapping in our framework. The MED scores in Table \ref{tb1}–\ref{tb3} are relatively small, indicating that the truth optimal solutions and the predictions of Controllable Pareto Front Learning methods are almost equivalent, particularly the Utility function, the novel scalarization of Pareto Hypernetwork. In the meanwhile, as shown in Figure \ref{fig3}(a) and Figure \ref{fig3}(c), although PHN-EPO and PHN-Cheby predictions are virtually identical, their essences are different. To solve the problem, EPO need to solve a linear programming every iteration to guarantee the generated solutions lie along the inverse preference vectors $\mathbf{r}^{-1}$, while the attribute is the nature of Chebyshev function. That is the reason why PHN-EPO takes many training time, especially on CelebA dataset (the MTL problem with 40 tasks).

\begin{table}[htbp]
\caption{\revise{Inference time on Multi-MNIST, Multi-Fashion, and Multi-Fashion+MNIST datasets by a random reference vector.}}
\label{tb8}
\centering
\resizebox{0.5\textwidth}{!}{\begin{tabular}{| c |>{\centering\arraybackslash}m{1.cm}| >{\centering\arraybackslash}m{2.cm} | >{\centering\arraybackslash}m{2.cm} | >{\centering\arraybackslash}m{2.cm} | }
\toprule
                &  &  {\bf Multi-MNIST} & {\bf Multi-Fashion} & {\bf Fashion-MNIST}    \\ \midrule  

{\bf Method} & params & Infer-time (min.) & Infer-time (min.) & Infer-time (min.)\\
\midrule
LS & 32K &3.5  & 3.3  & 3.2  \\ 
EPO & 35K & 8.1  & 8.3  & 8.2  \\ 
PMTL & 33K & 11.3  & 11.7  & 11.4 \\ 
PHN-EPO & 3.3M & 0.017 & 0.018  & 0.018  \\ 
PHN-LS  & 3.3M & 0.022   & 0.019  & 0.018  \\ 
\midrule
PHN-Cheby (\textbf{ours}) & 3.3M & \bf 0.016   & 0.018 & 0.019  \\
PHN-Utility  (\textbf{ours}) & 3.3M &  0.019  & \bf 0.017  & \bf 0.017 \\ 
\bottomrule
\end{tabular}}
\end{table}
\revise{The MTL experiments demonstrated the superior inference of hypernetwork-based methods. Table \ref{tb8} shows the inference time of the Controllable Pareto Front Learning framework, which maps a preference vector to a point on the Pareto Front corresponding. We define the inference time as the duration from the input of the reference vector until the algorithm output the Pareto optimal solution. Our proposed framework is faster 100 than prior methods such as LS, EPO, and PMTL, which must train a CNN model to obtain a solution corresponding to a preference vector. The main reason is that the framework is pre-trained (offline learning) with extended priority vector data instead of directly calculating it to find the Pareto optimal solution from a corresponding reference vector (online learning).}

\section{Conclusion and Future Work}
In this paper, we have proved the convergence of Completed Scalarization Functions for convex MOO problem in order to develop Controllable Pareto Front Learning, a brilliant framework to approximate the entire Pareto front, with a comprehensive mathematical explanation of why Hypernetwork may give a precise mapping between a preference vector and the corresponding Pareto optimal solution. \revise{However, in some cases, decision-makers cannot choose an explicit preference vector, and only express relative importance between objectives. We provide further discussions about this situation in the Appendix \ref{sec:orderconstr}}. Controllable Pareto Front Learning offers significant promise for real-world Multi-Objective Systems that require real-time control and can integrate with the newest research in Machine Learning such as Graph Neural Networks, Knowledge Graph \citep{pham2021hierarchical, pham2022proposal}. Future work might relate to the provision of the Scalarization functions on the premise that they are monotonically increasing and pseudoconvex. In addition, we also provide the option to expand the class of the Scalarization function, which can be found in Appendix \ref{appendixB.3}. 
\section*{Acknowledgments}
This work was supported by Vietnam Ministry of Education and Training under  Grant number [B2023-BKA-07].
\printcredits

\appendix
\section{PREFERENCE ORDER CONSTRAINTS}
\label{sec:orderconstr}
\revise{In many cases, decision-makers are unable to provide an exact preference vector and instead offer a general description of preference-order constraints, often in the form of a rank list indicating the relative importance of different tasks. Notably, according to \cite{abdolshah2019multi}, preference-order constraints are closely linked to the definition of Pareto Stationary points (Definition \ref{def:paretostationary}), which satisfy the stationary equation where the weighted sum of gradients equals zero. Specifically, a point satisfies the condition of task 1 being more important than task 0 if the weight assigned to task 1 in the Pareto stationary equation is greater than the weight assigned to task 0. Consequently, this constraint creates a subset within the Pareto set, as depicted by the red region in figure \ref{fig:ordercons}.
}
\begin{figure}[h]
    \centering
    \includegraphics[width=0.4\textwidth]{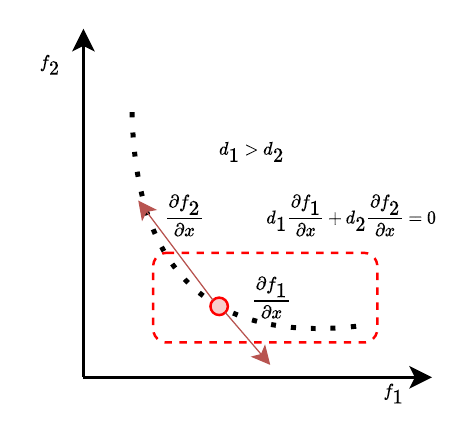}
    \caption{The weight $d_1, d_2$ of stationary equation representing that objective 1 is more important than objective 2}
    \label{fig:ordercons}
\end{figure}

On the other hand, our framework is designed to discover the entire Pareto set. Therefore, after optimizing the hypernetwork using the Equation \eqref{CS-PHN} described in the paper, we can identify the subset that aligns with the preference-order constraints specified by the decision-maker. As a result, our model remains valid and capable of accommodating these constraints effectively.
\section{EXPERIMENTAL DETAILS}\label{ED}
\subsection{MOO problems}
The experiments MOO were implemented on a computer with Intel(R) Core(TM) i7-10700, 64-bit CPU $@ 2.90$GHz, and RAM 16GB.

\textbf{Example 7.1:} In this example, we use a multi-layer perceptron (MLP) as the Hypernetwork model, which has the following structure:
\begin{align*}
    h(\mathbf{r},\phi): &\textbf{Input}(\mathbf{r}) \to \textbf{Linear}(2, 100) \to\textbf{ReLU}\\
    &\to \textbf{Linear}(100, 100)
\to \textbf{ReLU} \\
&\to \textbf{Linear}(100, 1) \to \textbf{Sigmoid}\to \textbf{Output}(\mathbf{x}_{\mathbf{r}}).
\end{align*}
We utilize Hypernetwork to generate an approximate efficient solution from a reference vector created by Dirichlet distribution with $\alpha = 0.6$. We trained all completion functions using an Adam optimizer \citep{kingma2014adam} with a learning rate of $1e-3$ and $10000$ iterations. In the test phase, we sampled $[5,10,50,100,300,600]$ preference vectors based on the method of \citep{das1997closer}. Besides, we also illustrated target points and predicted points from the pre-trained Hypernetwork in Figure \ref{fig3} (top).

\textbf{Example 7.2:} The structure of Hypernetwork has the following structure:
\begin{align*}
    h(\mathbf{r},\phi): &\textbf{Input}(\mathbf{r}) \to \textbf{Linear}(2, 100) \to\textbf{ReLU}\\
    &\to \textbf{Linear}(100, 100)
\to \textbf{ReLU} \\
    &\to \textbf{Linear}(100, 100)
\to \textbf{ReLU} \\
&\to \textbf{Linear}(100, 2) \to\textbf{Output}(\mathbf{x}_{\mathbf{r}}).
\end{align*}
Unlike Example 7.1, in this example, we did not have the Sigmoid layer before the Output layer but instead direct $\mathbf{x}_{\mathbf{r}}$ from the last Linear layer. We use five preference vectors (0.2, 0.8), (0.4, 0.6),(0.3,0.7), (0.5,0.5, (0.7,0.3), (0.6,0.4), (0.9,0.1) to visualize minimized and valued objectives predicted from the Hypernetwork in Figure \ref{fig3} (middle).

\textbf{Example 7.3:} To satisfy the equation constraint, in the last layer, we pass the output of the previous layer via a non-linear layer (Square Root Layer), the following structure:
\begin{align*}
    h(\mathbf{r},\phi): &\textbf{Input}(\mathbf{r}) \to \textbf{Linear}(3, 100) \to\textbf{ReLU}\\
    &\to \textbf{Linear}(100, 100)
\to \textbf{ReLU}\\
 & \to\textbf{Linear}(100, 100)\to \textbf{ReLU}\\
 &\to\textbf{Linear}(100, 100) \to \textbf{ReLU}\\
&\to \textbf{Linear}(100, 3) \to \textbf{Softmax}\\
&\to\textbf{Sqrt}\to
\textbf{Output}(\mathbf{x}_{\mathbf{r}}).
\end{align*}
Base on \citep{das}, we define subregions $\Omega_i$ by points $u=(u_1, \dots, u_p) \in U \subset \mathcal{P}$ such that:
\begin{align*}
    u_1 \in \{0, \delta, 2\delta, \dots, ..., 1\} \text{ s.t }  \frac{1}{\delta}=k \in \mathbb{N}^* 
\end{align*}
If $1<j<m-1$, $n_i = \frac{\delta}{u_i}, 1 \leq i<j-1$, we have:
\begin{align*}
    u_j \in \{0, \delta, \dots, (p-\sum_{i=1}^{j-1}n_i)\delta\} \text{, }  u_m = 1 - \sum_{j=1}^{m-1}u_j
\end{align*}
We used the Delaunay triangulation algorithm \citep{Edelsbrunner1985VoronoiDA} for generating preference vectors in 3D space. Then we conduct experiments on those vectors and illustrate results in Figure \ref{fig3} by setting $k = 10$. Then, the number of points $u \in U$ in the space $\mathcal{P}$ is $\Big (^{m+k-1}_k \Big) = 66$. However, the Pareto optimal solution corresponds to the intersection of the preference vector $\mathbf{r}^{-1}$ and the Pareto front for PHN-Cheby and PHN-EPO. Because of this, to avoid choosing vectors with coordinate values of 0, we only use vectors with different coordinates better than 0.16. Ten preference vectors in the set of vectors will correspond to the value $k=10$.

\subsection{Multi-task Learning problems}
When experimenting, we set up MTL problems on a Linux server with Intel Xeon(R), Silver 4216 64-bit CPU $@ 2.10$GHz, RAM 64GB, and VGA NVIDIA Tesla T4 16GB. 

\textbf{Image Classification:} We use Multi-LeNet \citep{sener2018multi} to train Hypernetwork. The network's architecture is illustrated in Figure \ref{fig8}. In this experiments, we use Cross-Entropy loss, an Adam optimizer with a batch size of 256 and a learning rate of $1e-4$ over 150 epochs. The validation set is then used to decide the optimal Hypervolume for each approach. We evaluate the Hypernetwork using 25 random preference vectors. The results on the test set are visualized Figure \ref{acc}. 

\textbf{Scene Understanding:} 
When it comes to the Target network, we make use of a SegNet architecture \citep{badrinarayanan2017segnet}. Each method is trained for 100 epochs using the Adam optimizer with the following parameters: an initial learning rate of $1e-4$, $\alpha = 0.2$, and a batch size of 14. With 795 train images, 436 validation images, and 218 test images, using the \citep{das} algorithm, 23 evaluated and 28 tested rays were created to achieve the greatest possible hypervolume on the validation set. We use the chunking technique \citep{hypernetwork, von2019continual} to generate and reduce the number of parameters for the Hyper-SegNet architecture as Figure \ref{fig15}.

\textbf{Multi-Output Regression:} The target network is Multi-Layer Perceptron which has 21 input variables, 3 layers with 128 units, activation ReLU, and a fully-connected layer. We parameterize $h(\mathbf{r}, \phi)$ on Hypernetwork using a feed-forward network with a range of outputs. Each output generates a unique weight tensor for the target network. To construct shared features, the input $\mathbf{r}$ is first mapped to the 100-dimensional space using a multi-layer perceptron network. These features are passed across fully connected levels in the target network to produce a weight matrix for each layer. Using an Adam optimizer with a learning rate of $1e-3$ and a batch size of 512, we train all algorithms. We multiply the learning rate by $\sqrt{2}$ if the algorithm fails to update the best model for 40 consecutive epochs. $\alpha$ was set to $0.2$. By dividing them by the quantile of 0.9, we normalize the output variables.

\textbf{Multi-Label Classification:} We conduct an experiment to the CelebA dataset, which includes more than $200000$ face images annotated with 40 binary labels, we use a HyperResnet-18 to generate the weight of layers for TargetResnet-18 as Figure \ref{fig12}. In this experiment, we split the dataset into 162770 train images with sizes of $64\times 64$, 19867 valid images, and 19962 test images. We train using 100 epochs, a $5e-4$ learning rate, and the best weight is stored by evaluating five preference vectors with a Mean Accuracy score. Besides, we report a performance comparison of methods on the CelebA dataset in Table \ref{tb6}. Performance on four metrics, Mean Accuracy, Precision, Recall, and F1 score, are evaluated by 25 random preference vectors.

\subsection{Hypervolume Reference Point}
\label{sub:hv}
\revise{
The $m$-dimensional reference point serves as a crucial hyperparameter in the hypervolume indicator. In scenarios where the true Pareto front is unknown, and no specific objectives are prioritized, the reference point should be set equally on all coordinates. Fortunately, selecting an appropriate reference point that facilitates well-spread predictions to approximate the entire Pareto front is not a complex task. Often, choosing a reference point with coordinates higher than those obtained from a random initialization of network losses proves to be sufficient for evaluating outcomes.
}
\section{ADDITIONAL EXPERIMENTS}
\subsection{Approximation Pareto front.}\label{appendixB.1}
According to Section \ref{sec4}, the Pareto front may include an endless number of optimum values with varying trade-offs. In addition, the Pareto optimal solutions lack a definitive ranking. A Pareto Front Learning model for the Completed Scalarization function should be capable of approximating the whole Pareto Front and should make it simple to investigate any trade-off. In the training procedure, we employ Dirichlet distribution to produce preference vectors (rays) with $\alpha = 0.2$, the number of rays having the effect of approximating the complete Pareto set in order to construct the entire Pareto front. As Figure \ref{fig5}, Figure \ref{fig6}, and Figure \ref{fig7}, when there are a more significant number of rays used for learning, the Pareto front becomes more covered. 
\subsection{Gradient Explainer Hypernetwork}\label{appendixB.2}
Figure \ref{right} and Figure \ref{left} display the influence of preference vectors on Hypernetwork' prediction of the left (right) digit in the Multi-MNIST dataset. For this purpose, we additionally use the SHAP framework \citep{lundberg2017unified}. When the prediction of a class is a higher probability, the pixels will be red, and when it is lower, the pixels will be blue.

In addition, we apply the GradientExplainer to the CelebA dataset to explain the model's behavior as it is being trained. As seen in Figure \ref{gradient}, the probability densities of the red entry points are distributed most heavily in the areas of the face that contain the characteristics whose probability we want to predict.
\subsection{Design Scalarization Functions}\label{appendixB.3}
From Proposition 5.21 of \citep{dinh2005generalized}, scalarization function $s$ be a monotonically increasing function, then every optimal solution of Problem \eqref{CP} be an efficient solution to Problem \eqref{MOP}. However, it can not be a completed scalarization function, hence, we experiment the extended Scalarization functions based on theory of \citep{mahapatra2021exact, kamani2021pareto, chugh2020scalarizing}.

\textbf{Kullback–Leibler divergence function.} KL-divergence is a distance measurement function to minimize distance of two distributions:
\begin{align*}\tag{KL}
    s_{KL}(\mathcal{F},\mathbf{r}) = \sum_{i=1}^m \sigma_i(\mathcal{F},\mathbf{r})\log\left(m\sigma_i(\mathcal{F},\mathbf{r})\right),
\end{align*}
where $\sigma_i(\mathcal{F},\mathbf{r}) = \dfrac{e^{r_i f_i}}{\sum_{i=1}^m e^{r_i f_i}}, \forall i=1,\dots,m$. $s_{KL}$ is a strongly convex on $Y$.

\textbf{Cauchy–Schwarz function.} Cauchy is a proportionality measuring function:
\begin{align*}\tag{Cauchy}
    s_{Cauchy}\left(\mathcal{F},\mathbf{r}^{-1}\right) = 1-\dfrac{\left<\mathcal{F},\mathbf{r}^{-1}\right>^2}{\Vert\mathcal{F}\Vert^2\Vert\mathbf{r}^{-1}\Vert^2},
\end{align*}
where $s_{Cauchy}$ was implied by the Cauchy-Schwarz inequality pertaining to non-zero vectors $\mathcal{F}, \mathbf{r}^{-1}\in\mathbb{R}^m_{+}$.

\indent\textbf{Cosine-Similarity function.} Cosine is a form of the Utility function, which has the following formula:
\begin{align*}\tag{Cosine}
    s_{Cosine}\left(\mathcal{F},\mathbf{r}\right) = -\dfrac{\left<\mathcal{F},\mathbf{r}\right>}{\Vert\mathcal{F}\Vert\Vert\mathbf{r}\Vert}.
\end{align*}
\indent\textbf{Log function.} Log is also a form of the Utility function:
\begin{align*}\tag{Log}
    s_{Log}\left(\mathcal{F},\mathbf{r}\right) = \sum_{i=1}^m r_i\log{(f_i + 1)}.
\end{align*}
\indent\textbf{Prod function.} Prod is weighted product function, which was also called as product of powers:
\begin{align*}\tag{Prod}
    s_{Prod}\left(\mathcal{F},\mathbf{r}\right) = \displaystyle\prod_{i=1}^m\left((f_i+1)^{r_i}\right).
\end{align*}
\indent\textbf{AC function.} AC is augmented chebyshev function, which was added by LS function:
\begin{align*}\tag{AC}
    s_{AC}\left(\mathcal{F},\mathbf{r}\right) = \underset{i=1,\dots,m}{\max}\{r_i f_i\} + \rho\sum_{i=1}^m r_i f_i,
\end{align*}
where $\rho >0$.

\indent\textbf{MC function.} MC is modified chebyshev function, which has form as:
\begin{align*}\tag{MC}
    s_{MC}\left(\mathcal{F},\mathbf{r}\right) = \underset{i=1,\dots,m}{\max}\{r_i f_i + \rho\sum_{i=1}^m r_i f_i\},
\end{align*}
where $\rho >0$.

\indent\textbf{HVI function.} HVI is hypervolume indicator function, which was considered by \citep{hoang2022improving}, has the following formula:
\begin{align*}\tag{HVI}
    s_{HVI}\left(\mathcal{F},\mathbf{r}\right) = -HV(\mathcal{F},\mathbf{ref}) + \rho s_{Cosine}\left(\mathcal{F},\mathbf{r}\right),
\end{align*}
where $s_{Cosine}\left(\mathcal{F},\mathbf{r}\right) = -\dfrac{\left<\mathcal{F},\mathbf{r}\right>}{\Vert\mathcal{F}\Vert\Vert\mathbf{r}\Vert}$, $\mathbf{ref}$ is a reference point, and $\rho >0$.

We simple to verify that all of the consideration Scalarization functions satisfy monotonically increasing assumption on $Y$, the testing procedure yielded outcomes that give us optimism in Table \ref{tb10}.

\begin{table}[htbp]
\caption{We sample $1000$ preference vectors and evaluate $50$ random vectors follow-up time of 30 executions.}
\label{tb10}
\centering
\resizebox{0.5\textwidth}{!}{\begin{tabular}{|c|c|c|c|}
\toprule
 &  Example 7.1 &  Example 7.2 &  Example 7.3  \\

 \bf Method & MED $\Downarrow$ & MED $\Downarrow$& MED$\Downarrow$\\ 
 \midrule
PHN-EPO & $0.0088\pm  0.0006$ & $0.0017\pm  0.0002$ &  $0.0808\pm  0.0052$\\
\midrule   
PHN-LS  & $0.0042\pm  0.0008$ & $0.0017\pm  0.0002$ & $0.0494\pm  0.0043$\\ 
\midrule
PHN-Cheby & $0.0084\pm  0.0005$  &  $0.0019\pm  0.0002$ & $0.0395\pm  0.0036$\\ 
\midrule
PHN-Utility ($ub = 2.01$) & $0.0025\pm  0.0003$  &  $\bf 0.0013\pm  0.0001$ &  $\bf 0.0201\pm  0.0022$ \\ 
\midrule
PHN-KL   & $0.0052\pm  0.0003$  & $0.0124\pm  0.0002$ &  $0.0373\pm  0.0039$\\ 
\midrule
PHN-Cauchy & $0.0037\pm  0.0003$ & $0.0232\pm  0.0007$ &  $0.0642\pm  0.0025$\\
\midrule
PHN-Cosine & $0.0051\pm  0.0007$ & $0.0259\pm  0.0017$ &  $0.0396\pm  0.0028$\\
\midrule
PHN-Log & $0.0321\pm  0.0343$ & $0.0031\pm  0.0002$ &  $0.0225\pm  0.0035$\\
\midrule
PHN-Prod & $0.0432\pm  0.0412$ & $0.0084\pm  0.0008$ &  $0.0385\pm  0.0042$\\
\midrule
PHN-AC ($\rho = 0.0001$) & $0.0052\pm  0.0005$ & $0.0091\pm  0.0014$ &  $0.0222\pm  0.0039$\\
\midrule
PHN-MC ($\rho = 0.0001$) & $0.0078\pm  0.0008$ & $0.0136\pm  0.0008$ &  $0.0267\pm  0.0044$\\
\midrule
PHN-HVI ($\rho = 100, heads = 8$) & $\bf 0.0017\pm \bf 0.0002$ & $0.0092\pm  0.0011$ &  $0.0303\pm  0.0087$\\
\bottomrule
\end{tabular}}

\end{table}
\subsection{Non-Convex MOO Problems}\label{appendixB.4}
\label{sec:non-convex}
\revise{We experiment with the additional test problems, including ZDT1-2 \citep{zitzler2000comparison}, and DTLZ2 \citep{deb2002scalable}. Dimensions and attributes for those test problems were illustrated in Table \ref{tb9}. We use Algorithm \ref{alg:hypermoo} to train Hypernetwork the following structure:
\begin{align*}
     h(\mathbf{r},\phi): &\textbf{Input}(\mathbf{r}) \to \textbf{Linear}(m, 100) \to\textbf{ReLU}\\
    &\to \textbf{Linear}(100, 100)
\to \textbf{ReLU} \\
&\to \textbf{Linear}(100, n) \to \textbf{Sigmoid}\to \textbf{Output}(\mathbf{x}_{\mathbf{r}}),
\end{align*}
with $100000$ iterations, $\alpha = 0.6$, 0.001 learning rate, and Adam optimizer. With the convex Pareto-optimal set in the test problem ZDT1, scalarization functions make a discrepancy between the predicted and optimal solutions. As shown in Figure 5, linear scalarization and utility functions perform poorly with the non-convex Pareto-optimal set in the test problems ZDT2 and DTLZ2.}
\begin{table}[htbp]
\caption{\revise{Dimensions and attributes for the MOO problems.}}
\label{tb9}
\centering
\resizebox{0.5\textwidth}{!}{\begin{tabular}{c|c|c|c|c}
\toprule
Problem & n & m & Objective function & Pareto-optimal\\
 \midrule
ex7.1 & 1 & 2 & convex & convex \\
ex7.2 & 2 & 2 & convex & convex \\
ex7.3 & 3 & 3 & convex & convex \\
ZDT1 & 30 & 2 & non-convex & convex\\
ZDT2 & 30 & 2 & non-convex & non-convex\\
DTLZ2 & 10 & 3 & non-convex & non-convex\\
\bottomrule
\end{tabular}}
\end{table}

\revise{Our experimental results reveal that the hypernet falls short in accurately approximating non-convex functions (ZDT1, ZDT2, and DTLZ2). Specifically, PHN-LS and PHN-Utility encounter challenges in handling Concave-Shape Pareto Fronts, leading to poor performance. Nevertheless, it's worth noting that PHN-Cheby and PHN-EPO continue to perform well in these scenarios.}
\subsection{Controllable Pareto Front Learning with no shared optimized variables}
\revise{
Consider the problem when the two objectives in a synthetic scenario are independent of each other, i.e. the entire Pareto front collapses to one single point:
\begin{align*}
\min & \left\{x_1,x_2\right\}\\
\text{s.t. } & 0\le x_1,x_2\le 1.
\end{align*}
We use Hypernetwork the following structure:
\begin{align*}
     h(\mathbf{r},\phi): &\textbf{Input}(\mathbf{r}) \to \textbf{Linear}(m, 100) \to\textbf{ReLU}\\
    &\to \textbf{Linear}(100, 100)
\to \textbf{ReLU} \\
&\to \textbf{Linear}(100, n) \to \textbf{Sigmoid}\to \textbf{Output}(\mathbf{x}_{\mathbf{r}}),
\end{align*}
}

\revise{If the functions $f_1$ and $f_2$ are independent, meaning they do not share optimized variables, specific ML Pareto solvers like MOO-MTL (\cite{sener2018multi}), PMTL (\cite{lin2020controllable}), or EPO (\cite{mahapatra2021exact}) encounter issues as they rely on a common descent direction for shared variables. Besides, those algorithms did not perform the projection on the constraint space at each iteration. Consequently, these solvers return NaN, a value outside the feasible set or crash in this example. In contrast, our framework, Controllable PFL, operates by optimizing the parameters of the hypernetwork rather than the shared variables, and utilizes the constraint function, such as the Sigmoid function, to control output. This fundamental distinction allows our framework to perform effectively even in such unique scenarios. Interestingly, PHN-EPO also does not fail because it directly applies the EPO solver to the parameters of the hypernetwork.}

\revise{Experiment results of this special example are shown in the Figure \ref{fig_ex4} and \ref{fig_ex4_1}. Our framework Controllable PFL is still work well, and return the optimal value $(0, 0)$ for all preference vectors.}

\begin{figure*}[ht]
     \centering
        \begin{subfigure}[b]{0.24\textwidth}
         \centering
         \includegraphics[width=\textwidth]{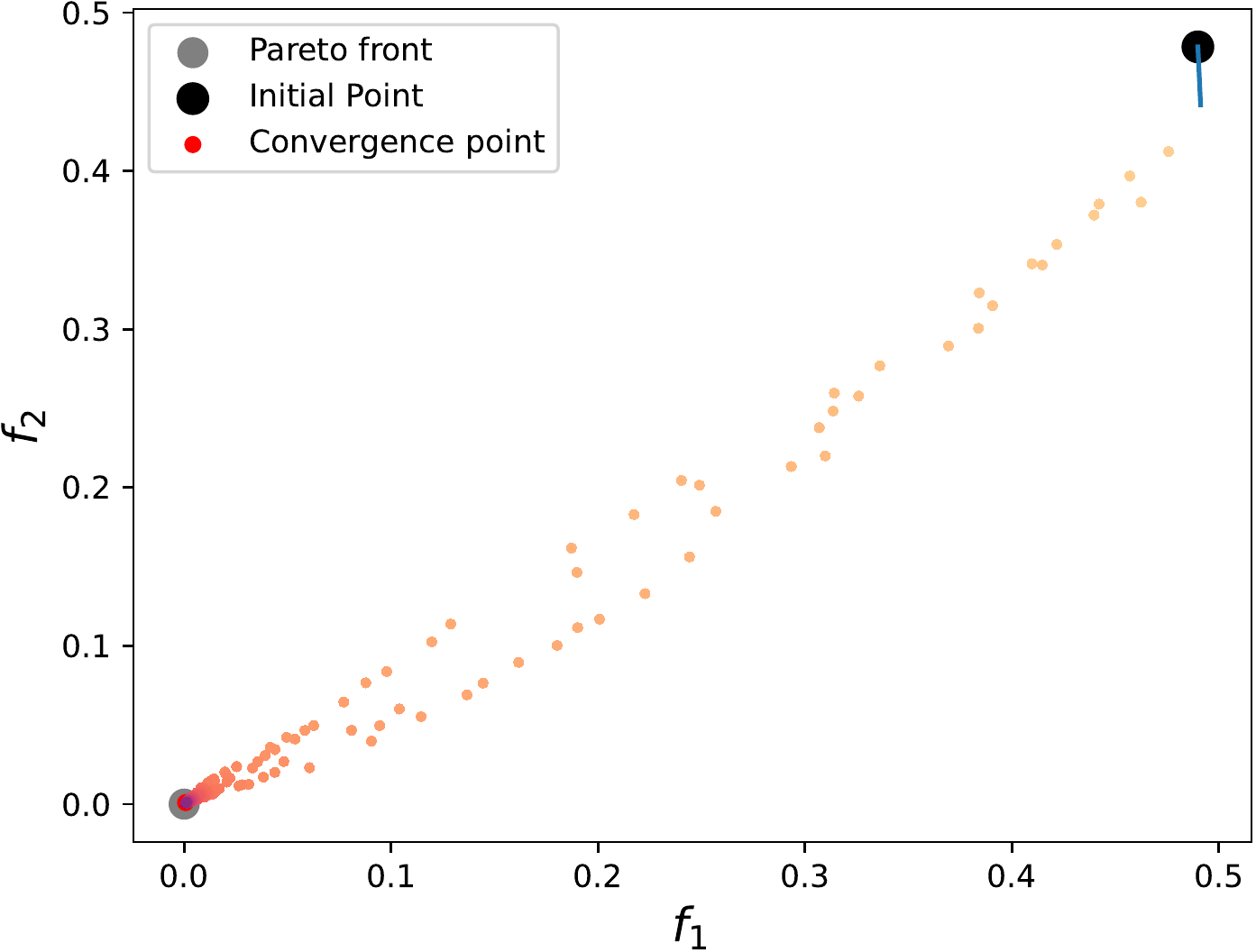}
         \caption{PHN-EPO}
     \end{subfigure}
     \hfill
     \begin{subfigure}[b]{0.24\textwidth}
         \centering
             \includegraphics[width=\textwidth]{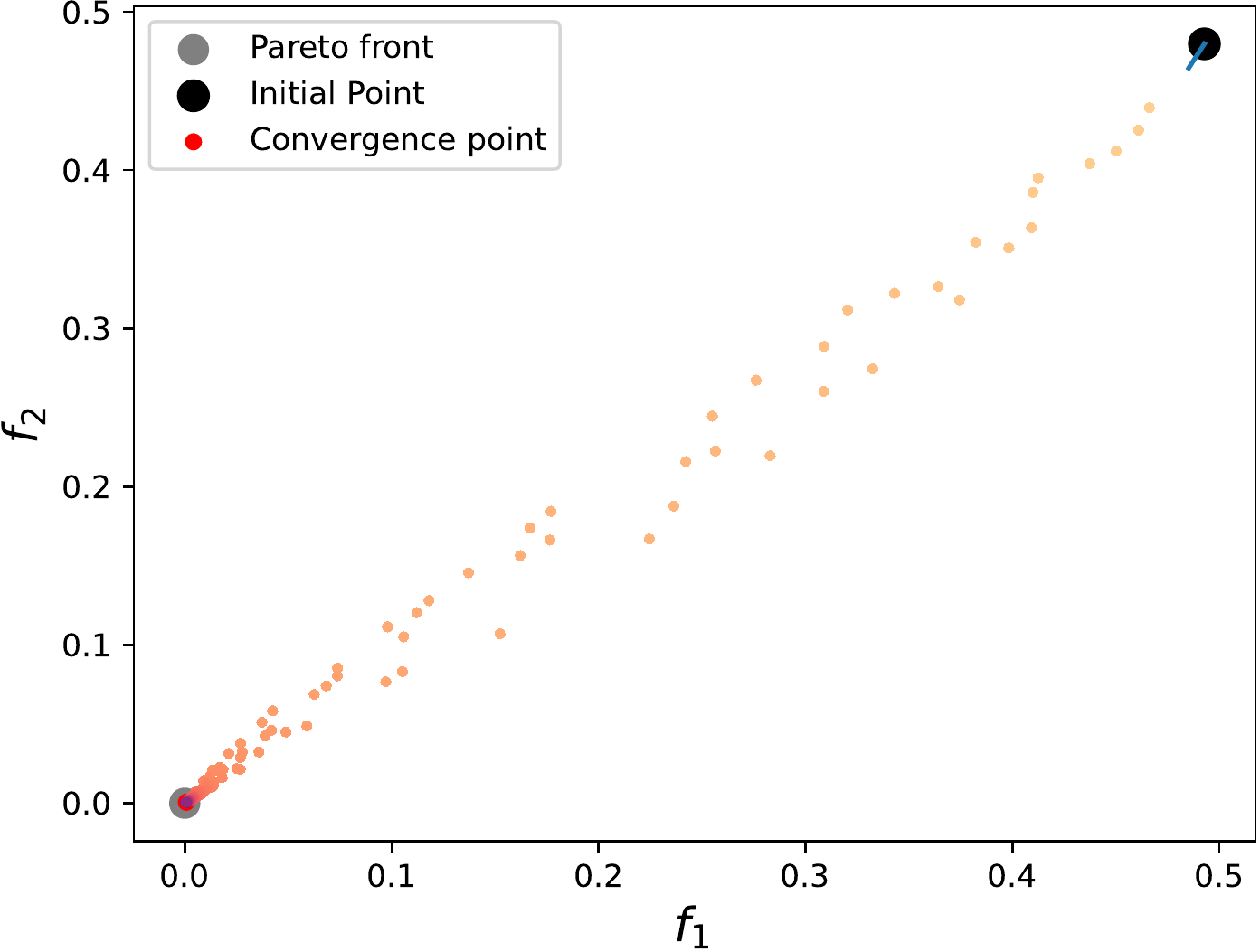}
         \caption{PHN-LS}
     \end{subfigure}
     \hfill
     \centering
     \begin{subfigure}[b]{0.24\textwidth}
         \centering
         \includegraphics[width=\textwidth]{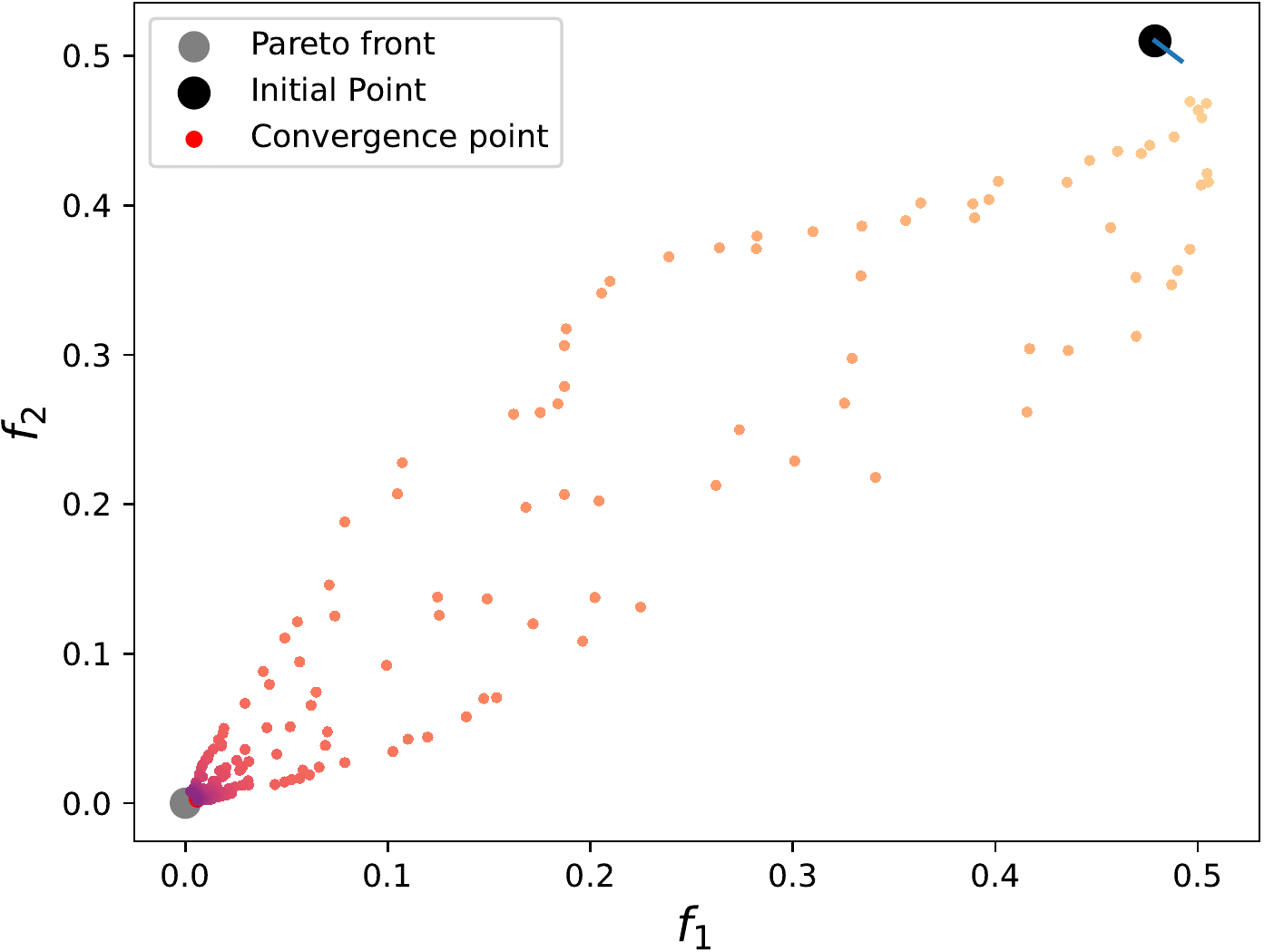}
         \caption{PHN-Cheby}
     \end{subfigure}
     \hfill
     \begin{subfigure}[b]{0.24\textwidth}
         \centering
         \includegraphics[width=\textwidth]{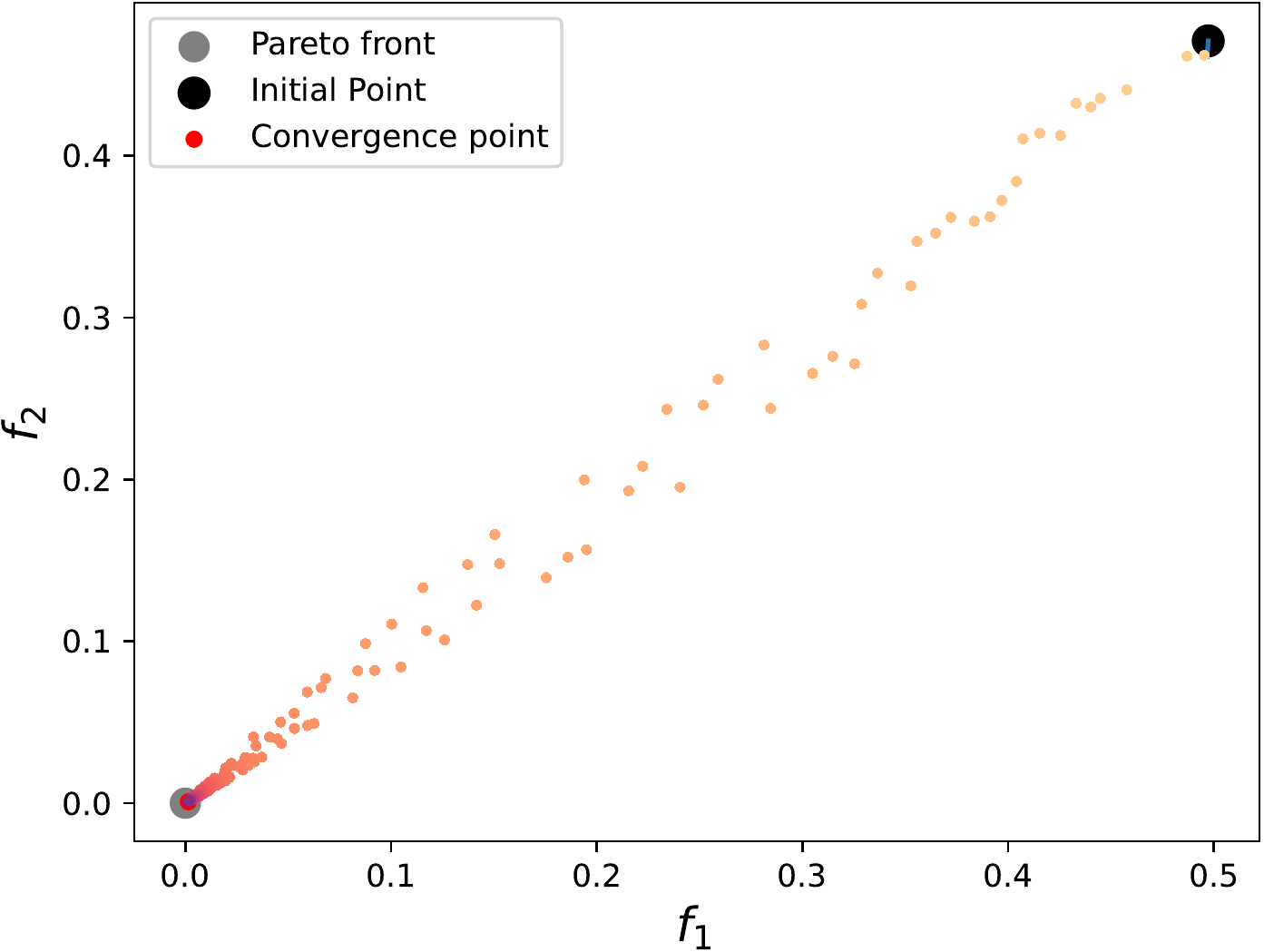}
         \caption{PHN-Utility}
     \end{subfigure}

      \caption{\revise{The convergence trajectory of generated solutions in the optimization process. Light colors are points generated in early iterations, dark colors are points generated in later iterations.}}
      \label{fig_ex4}
\end{figure*}
\begin{figure*}[ht]
     \centering
        \begin{subfigure}[b]{0.3\textwidth}
         \centering
         \includegraphics[width=\textwidth]{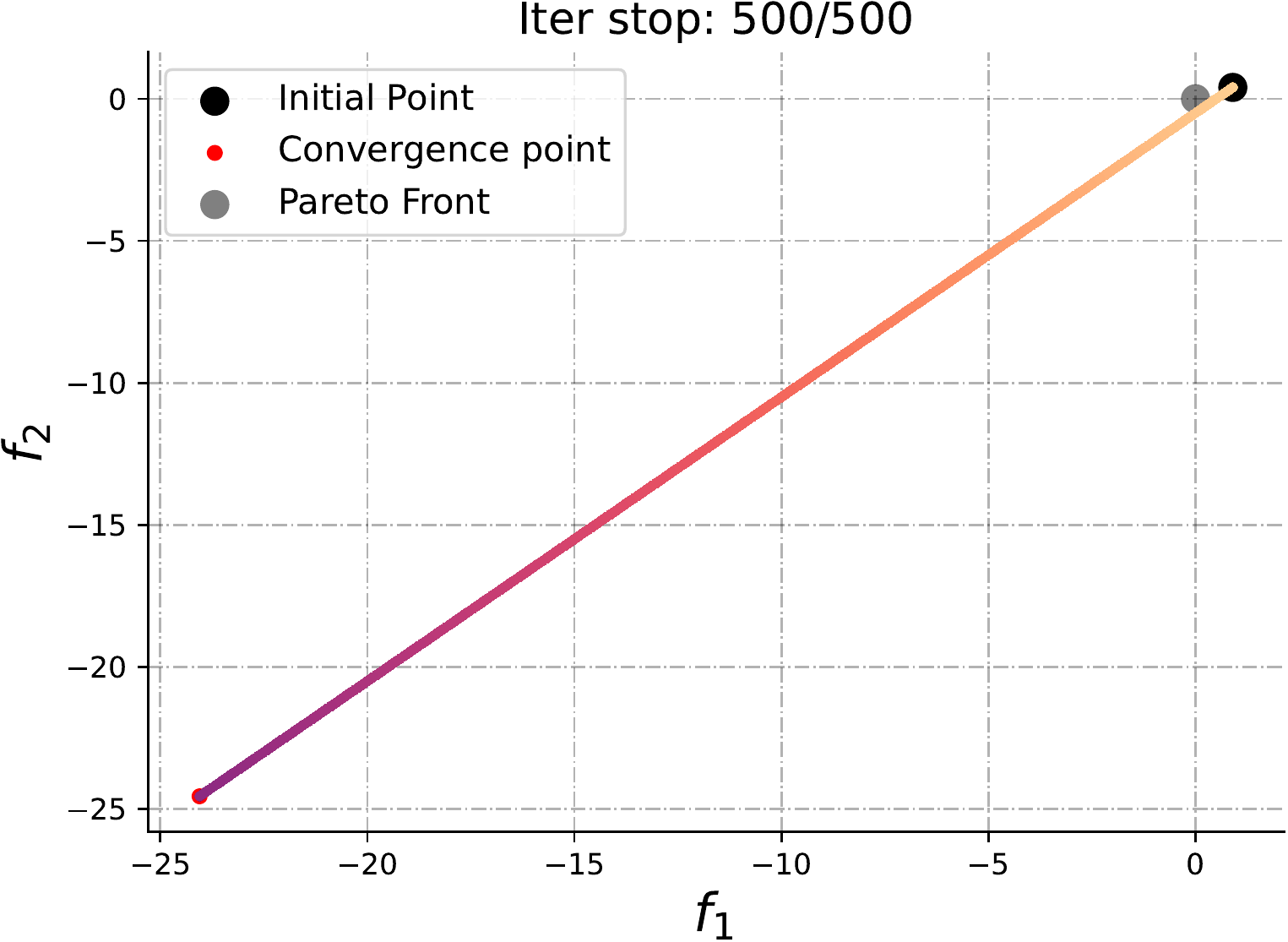}
         \caption{MOO-MTL}
     \end{subfigure}
     \hfill
     \begin{subfigure}[b]{0.3\textwidth}
         \centering
             \includegraphics[width=\textwidth]{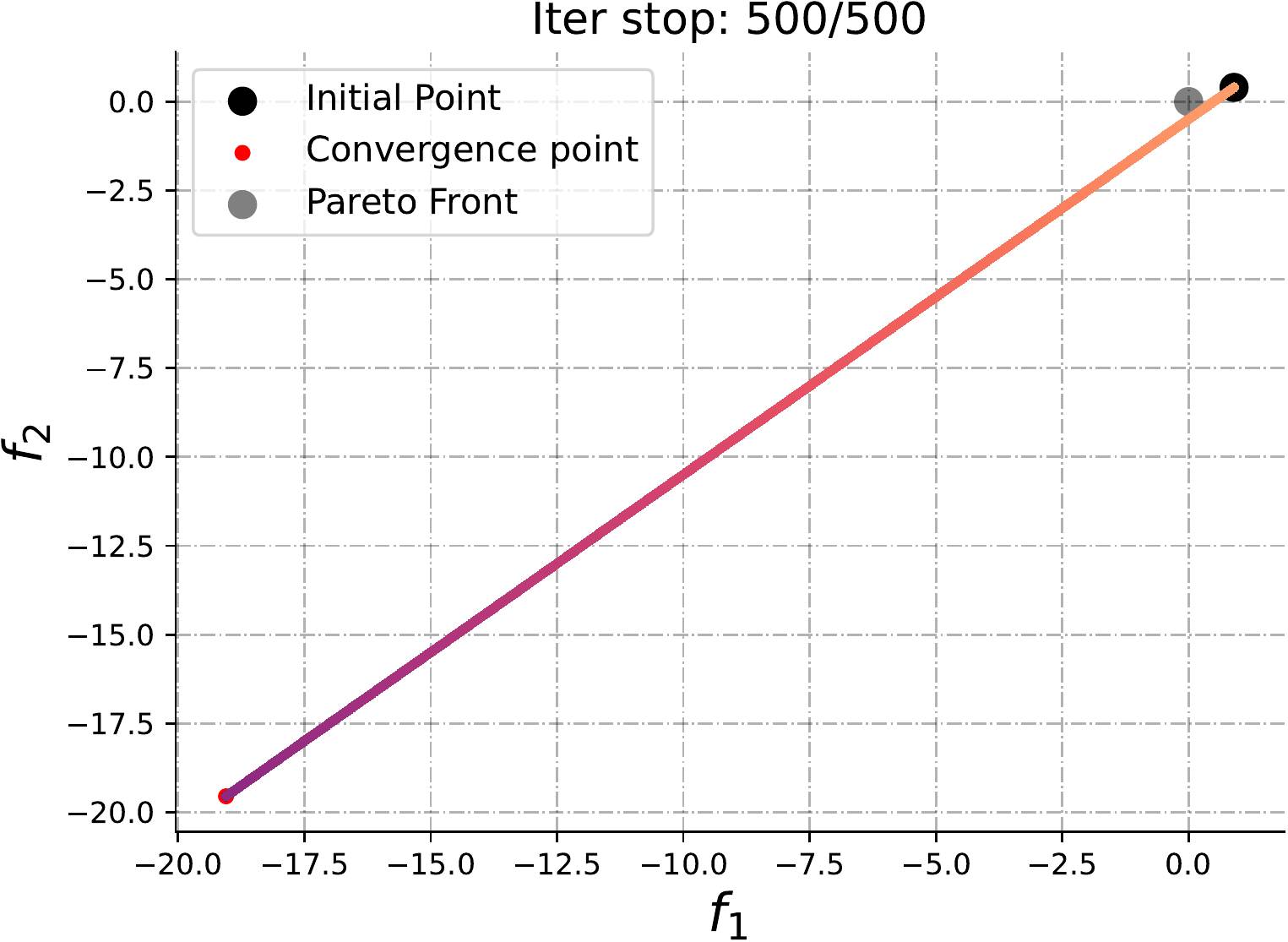}
         \caption{PMTL}
     \end{subfigure}
     \hfill
     \centering
     \begin{subfigure}[b]{0.3\textwidth}
         \centering
         \includegraphics[width=\textwidth]{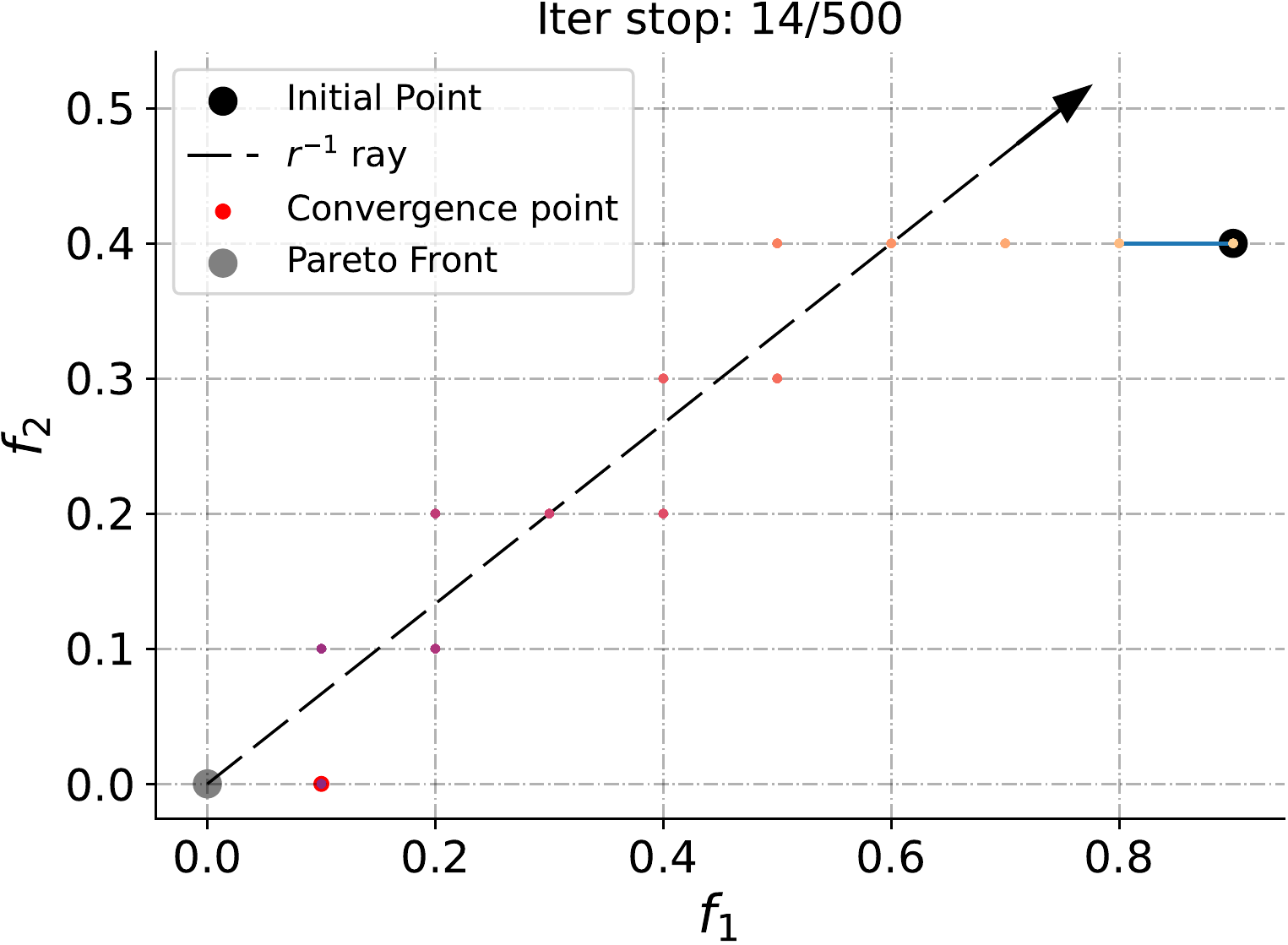}
         \caption{EPO}
     \end{subfigure}
      \caption{\revise{When the functions $f_1$ and $f_2$ are independent, ML Pareto solvers like MOO-MTL, PMTL, or EPO return NaN, a value outside the feasible set, or crash.}}
      \label{fig_ex4_1}
\end{figure*}
\begin{figure*}[ht]
     \centering
        \begin{subfigure}[b]{0.24\textwidth}
         \centering
         \includegraphics[width=\textwidth]{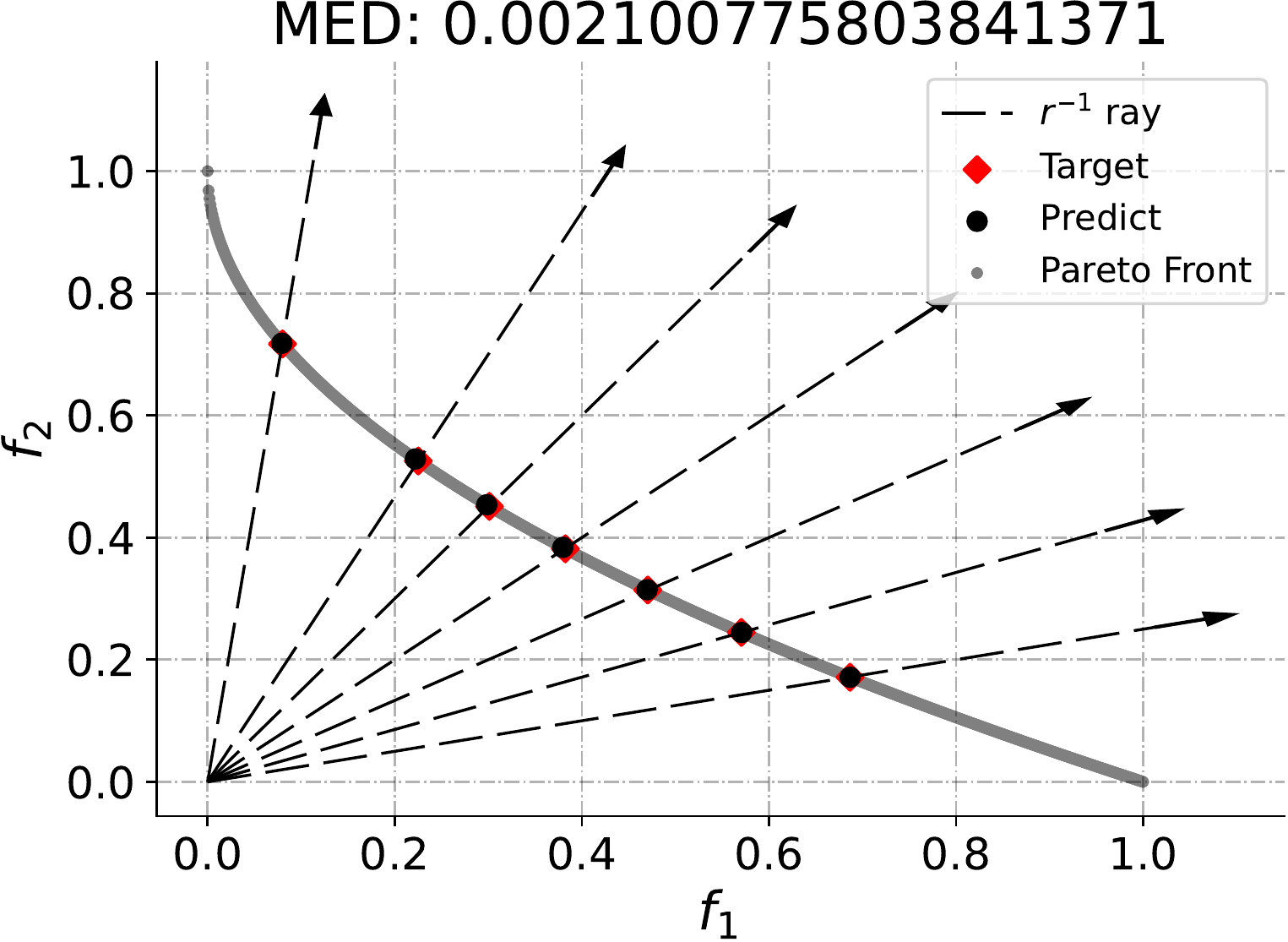}
     \end{subfigure}
     \hfill
     \begin{subfigure}[b]{0.24\textwidth}
         \centering
             \includegraphics[width=\textwidth]{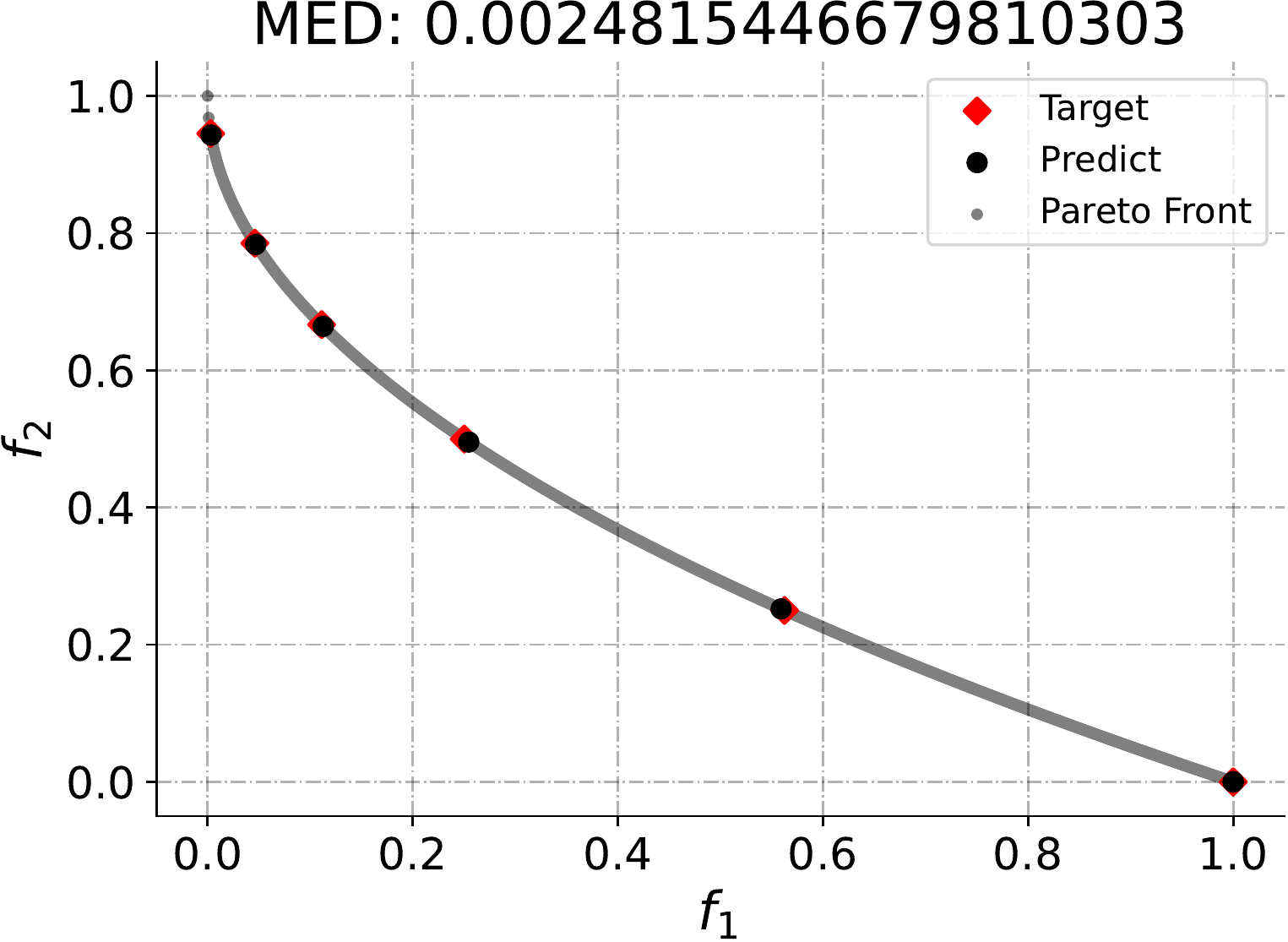}
     \end{subfigure}
     \hfill
     \begin{subfigure}[b]{0.24\textwidth}
         \centering
             \includegraphics[width=\textwidth]{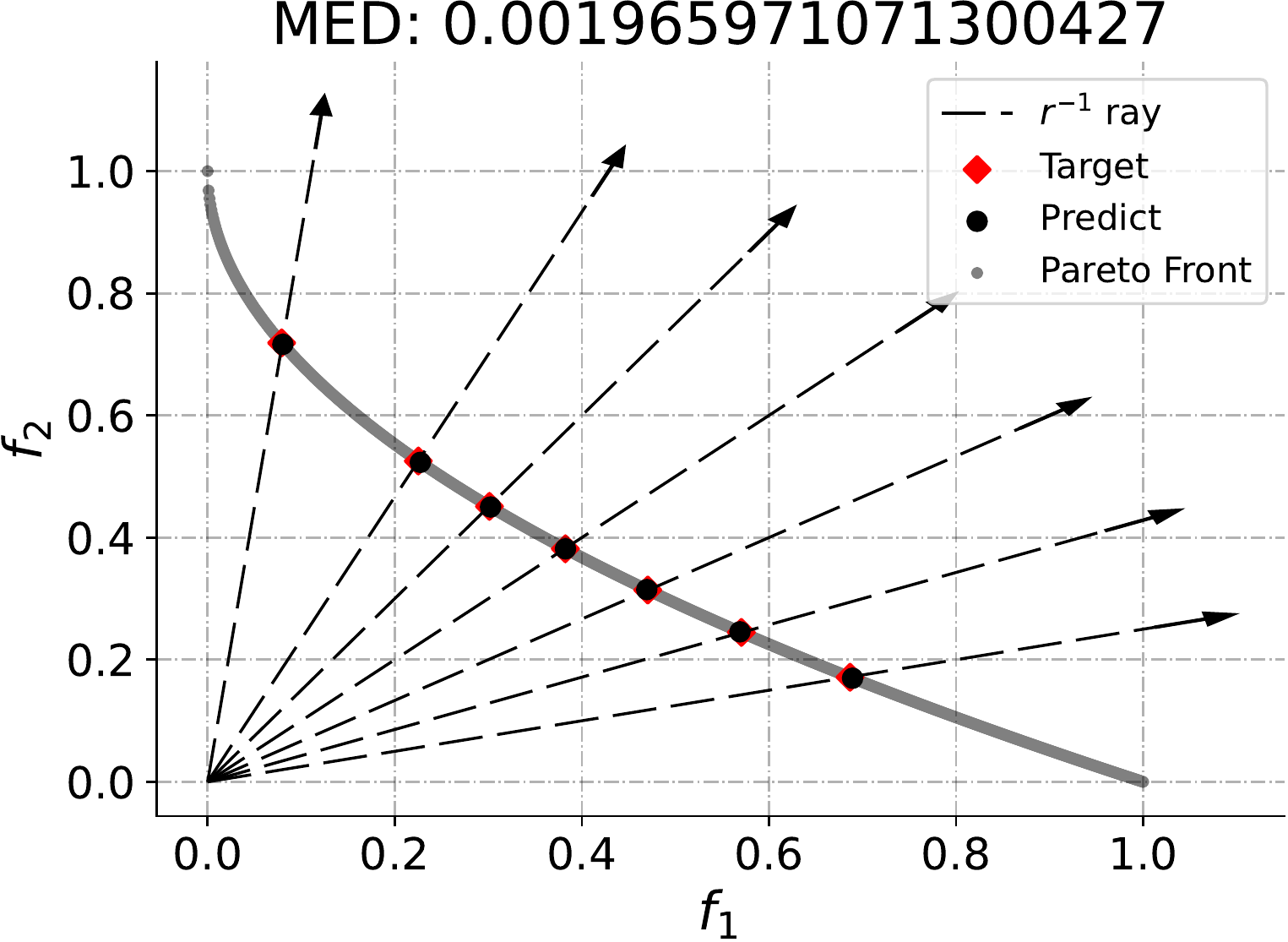}
     \end{subfigure}
     \hfill
     \begin{subfigure}[b]{0.24\textwidth}
         \centering
         \includegraphics[width=\textwidth]{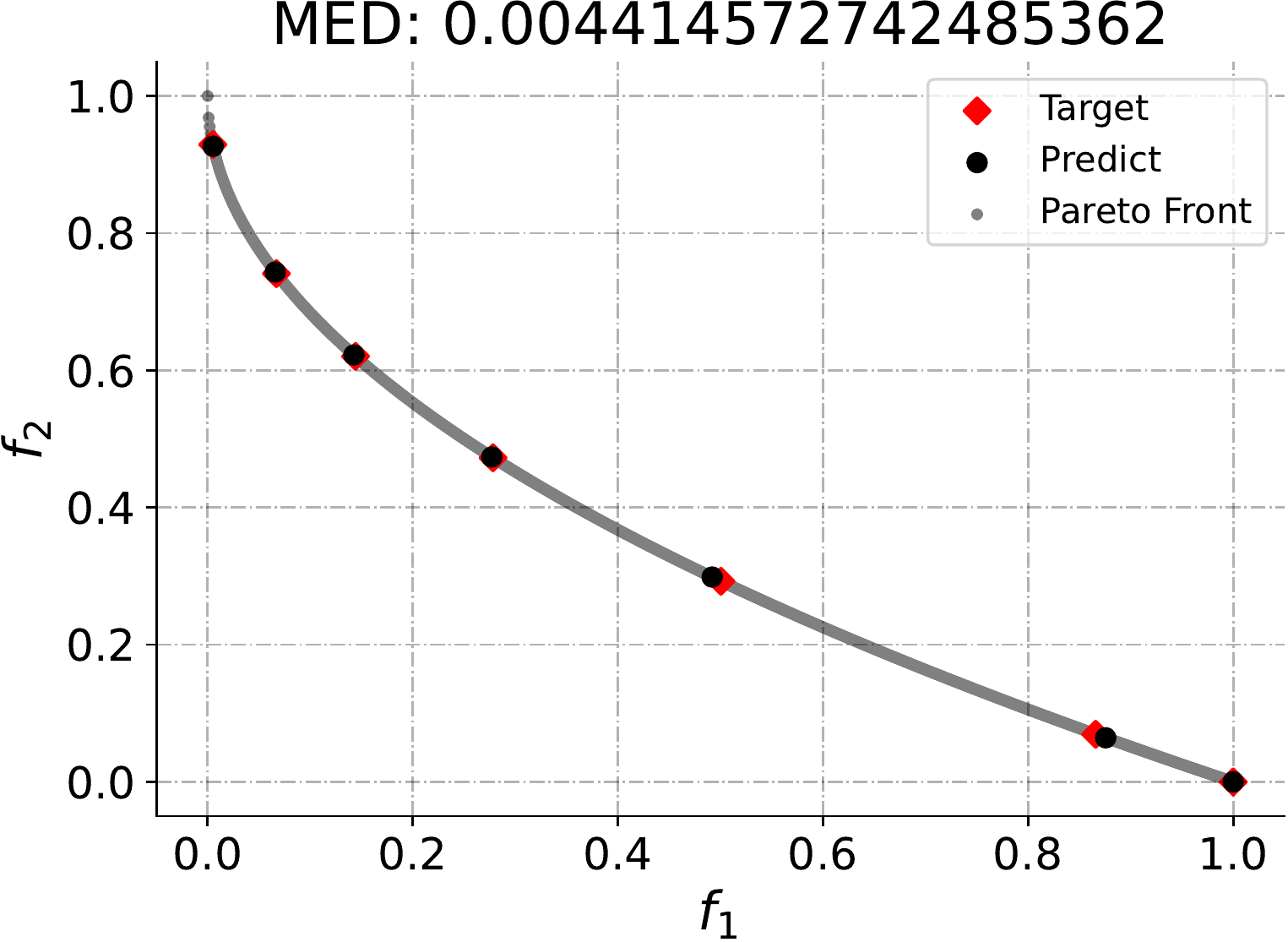}
     \end{subfigure}

     \centering
     \begin{subfigure}[b]{0.24\textwidth}
         \centering
         \includegraphics[width=\textwidth]{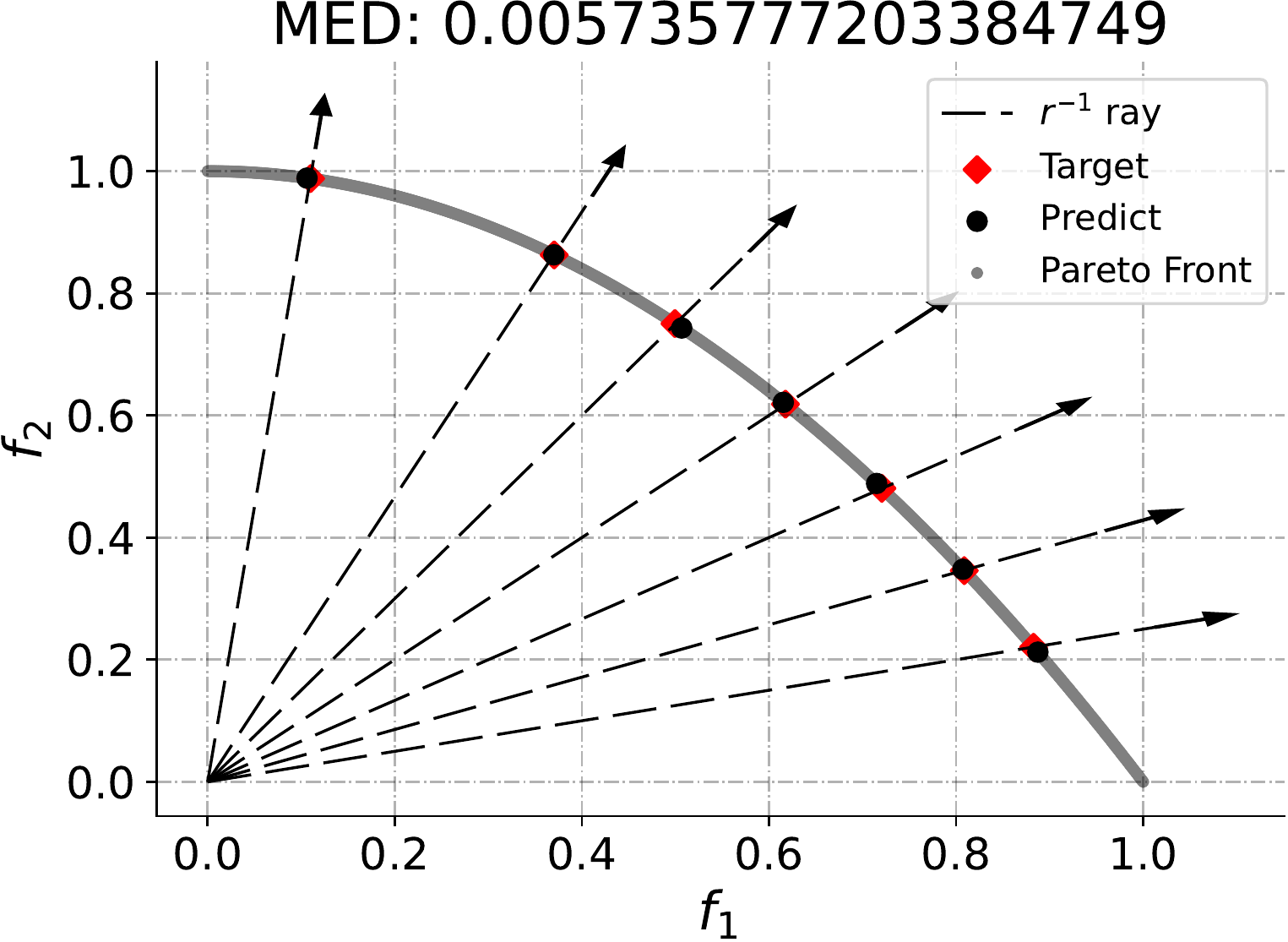}
     \end{subfigure}
     \hfill
     \begin{subfigure}[b]{0.24\textwidth}
         \centering
         \includegraphics[width=\textwidth]{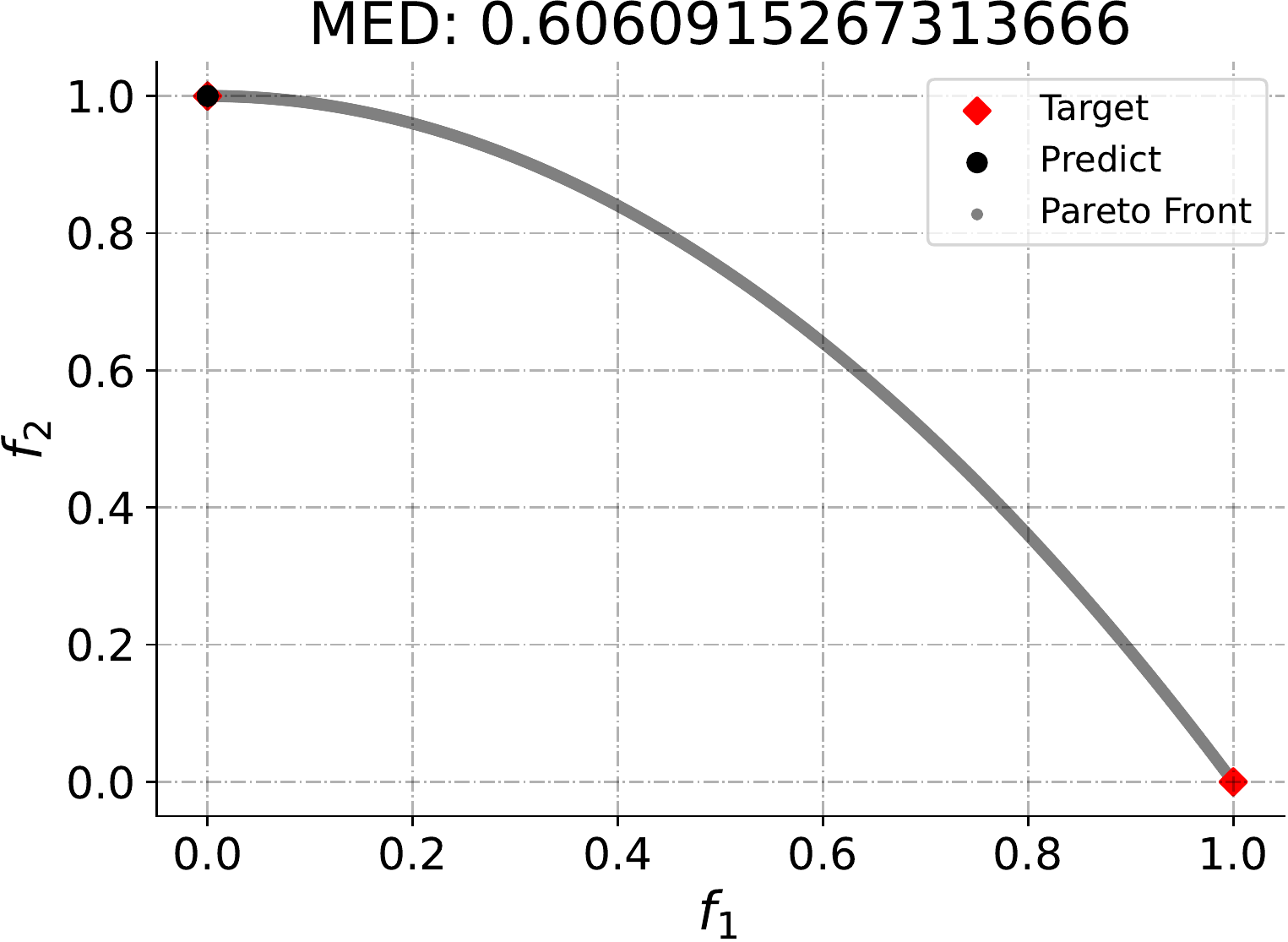}
     \end{subfigure}
     \hfill
     \begin{subfigure}[b]{0.24\textwidth}
         \centering
         \includegraphics[width=\textwidth]{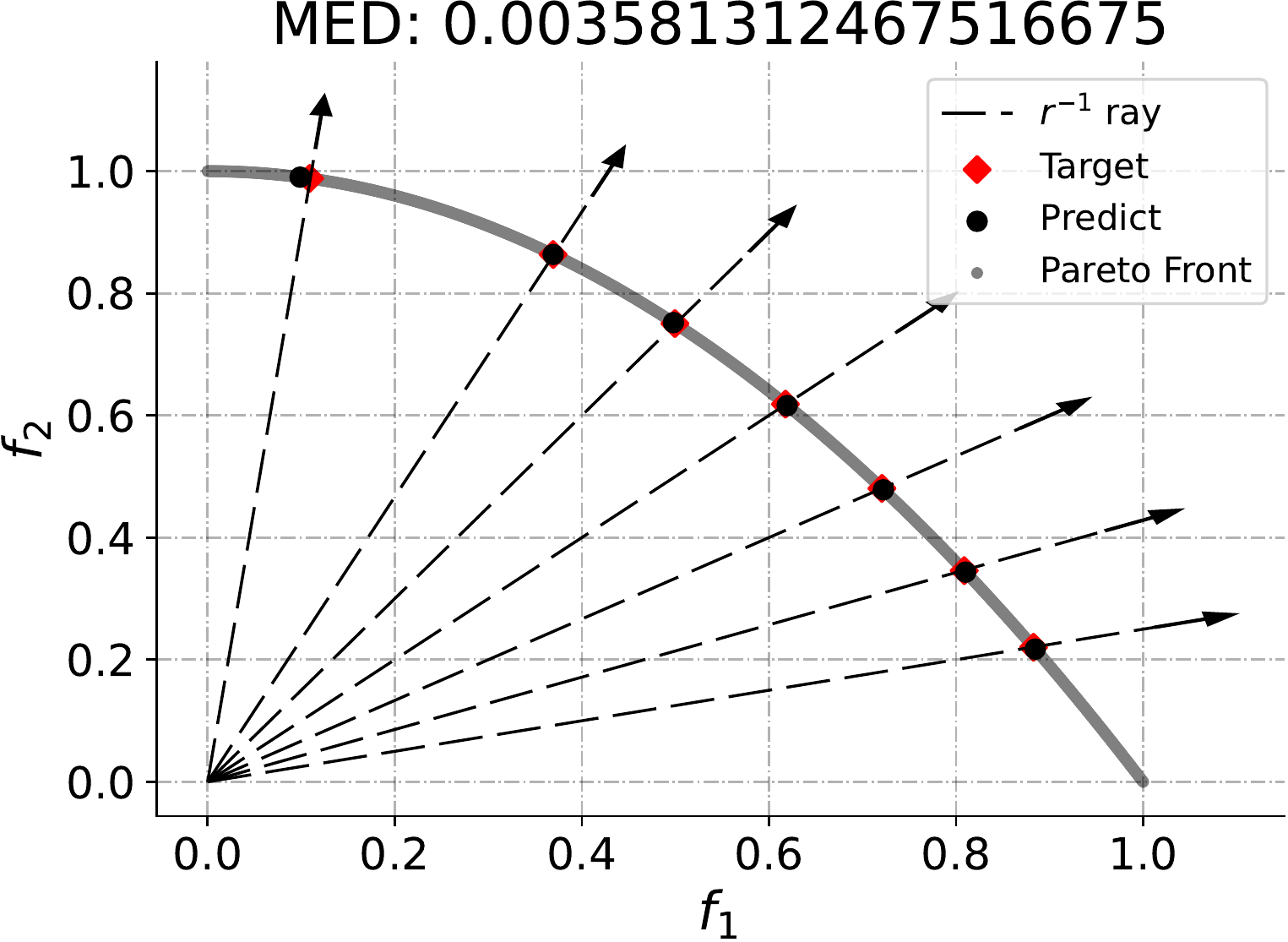}
     \end{subfigure}
     \hfill
     \begin{subfigure}[b]{0.24\textwidth}
         \centering
         \includegraphics[width=\textwidth]{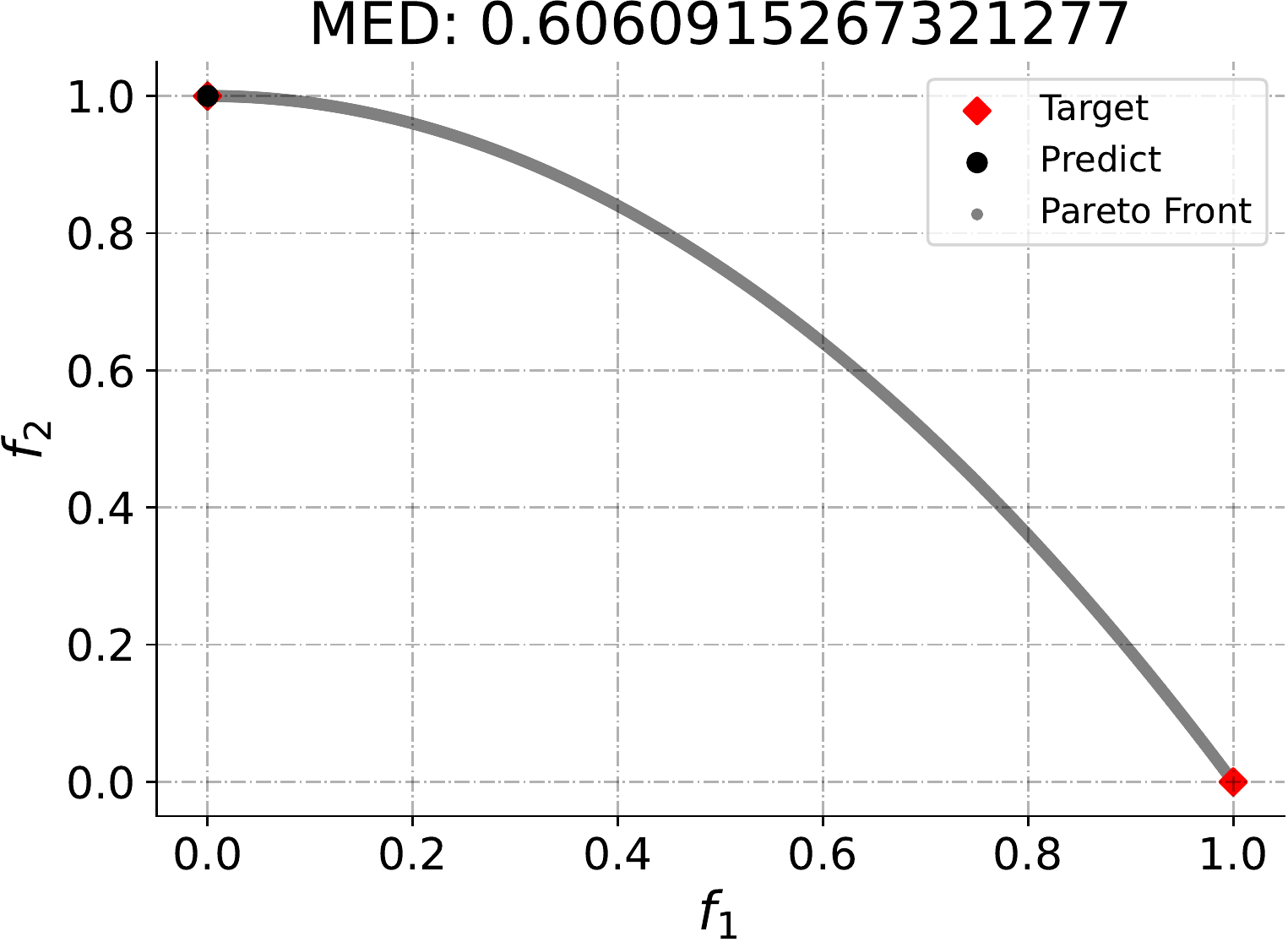}
     \end{subfigure}

    \begin{subfigure}[b]{0.24\textwidth}
         \centering
         \includegraphics[width=\textwidth]{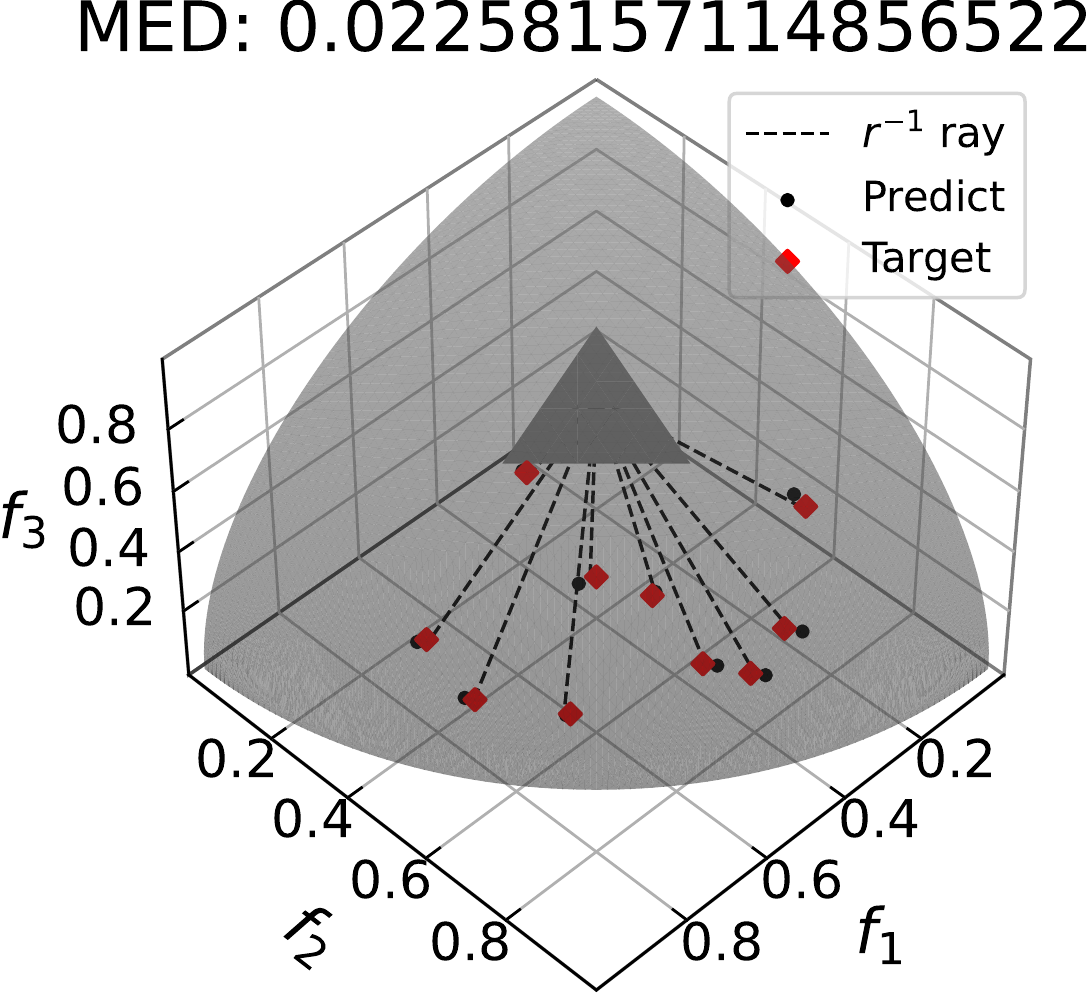}
         \caption{PHN-EPO}
     \end{subfigure}
     \hfill
     \begin{subfigure}[b]{0.24\textwidth}
         \centering
         \includegraphics[width=\textwidth]{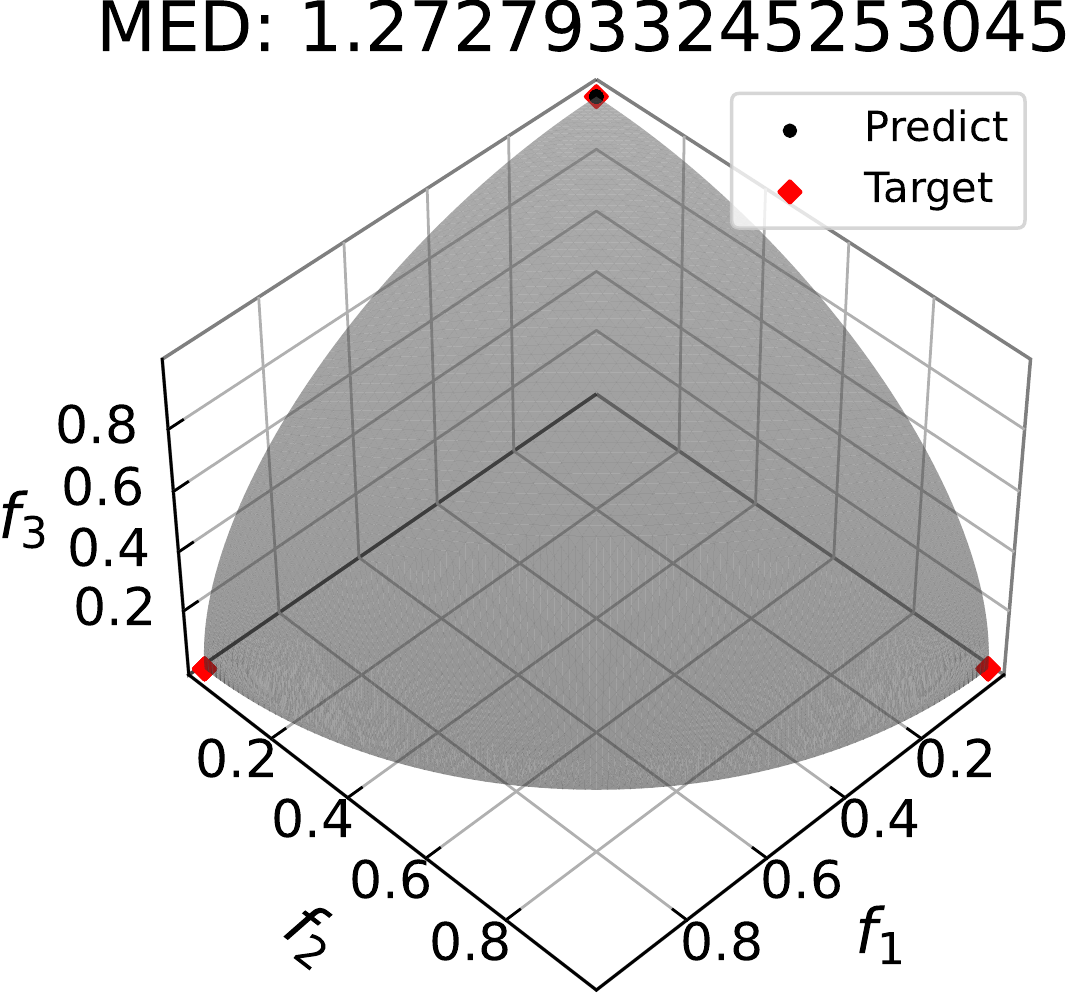}
         \caption{PHN-LS}
     \end{subfigure}
     \hfill
     \begin{subfigure}[b]{0.24\textwidth}
         \centering
         \includegraphics[width=\textwidth]{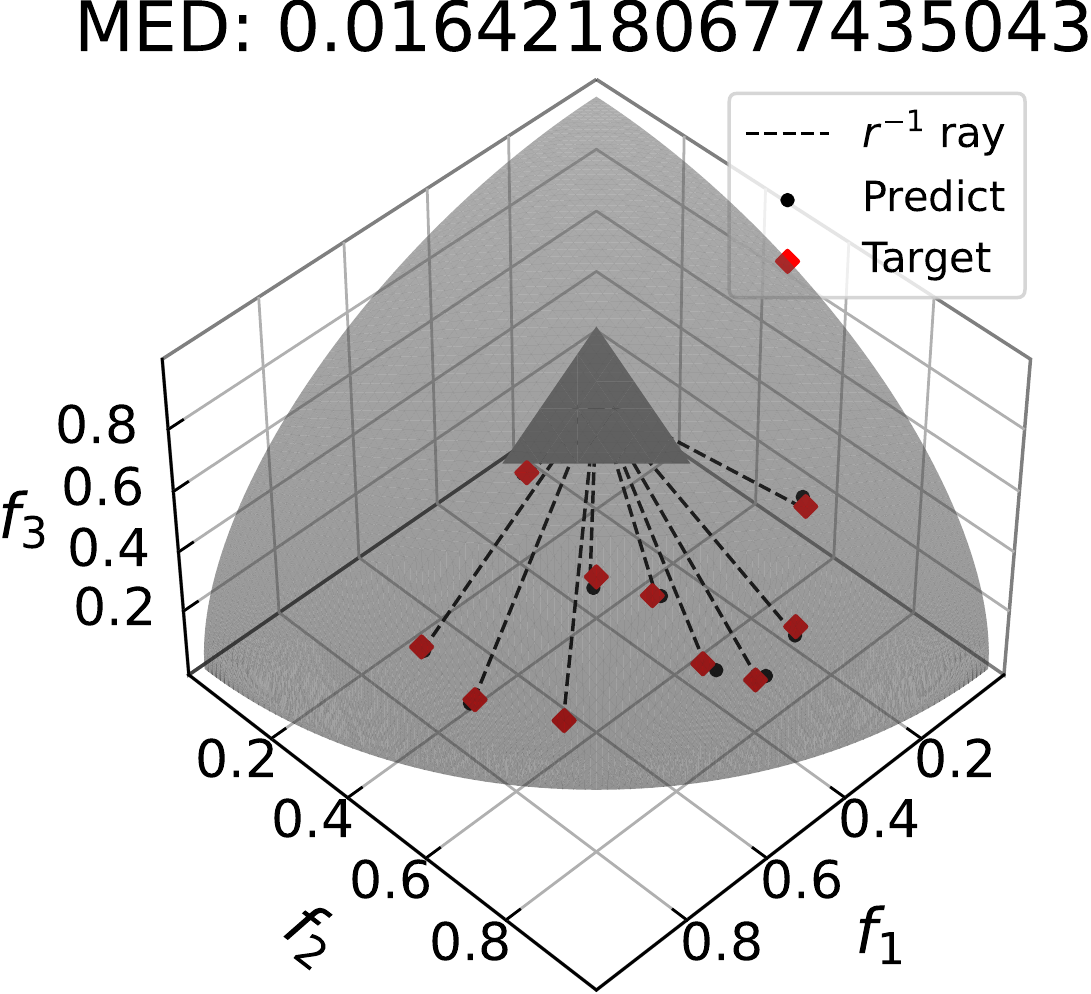}
         \caption{PHN-Cheby}
     \end{subfigure}
     \hfill
     \begin{subfigure}[b]{0.24\textwidth}
         \centering
         \includegraphics[width=\textwidth]{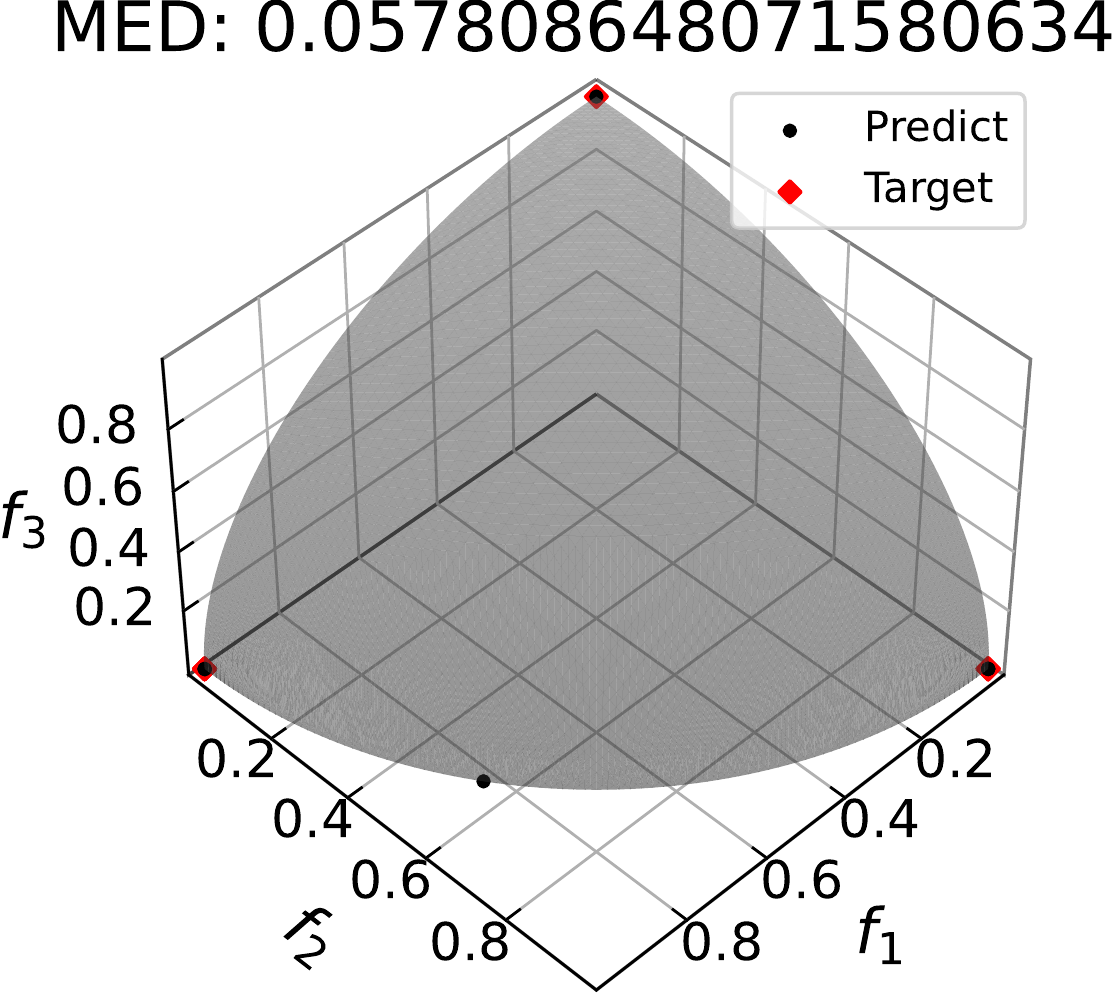}
          \caption{PHN-Utility}
     \end{subfigure}
      \caption{\revise{Output of Controllable Pareto Front Learning method in ZDT1 (top), ZDT2 (middle), and DTLZ2 (bottom). The black dashed line represents the $\mathbf{r}^{-1}$ ray in Figures (a) and Figures (c). The \textcolor{red}{red} and \textcolor{black}{black} points are the target and predicted points respectively.}}
      \label{fig}
\end{figure*}

\begin{figure*}[ht]
    \centering
     \begin{subfigure}[b]{0.3\textwidth}
     \centering
     \includegraphics[width=\textwidth]{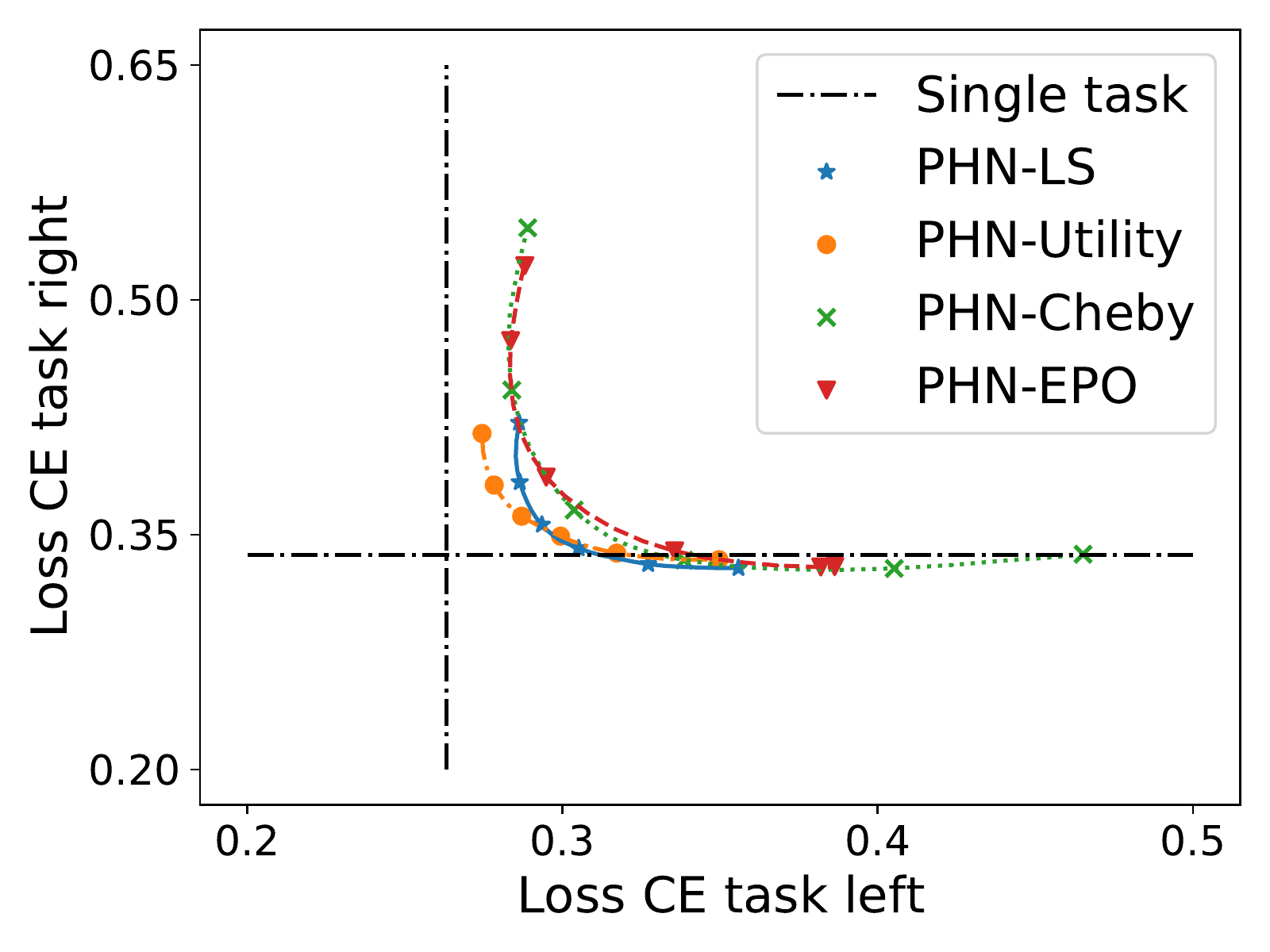}

     \end{subfigure}
         \hfill
     \begin{subfigure}[b]{0.3\textwidth}
         \centering
         \includegraphics[width=\textwidth]{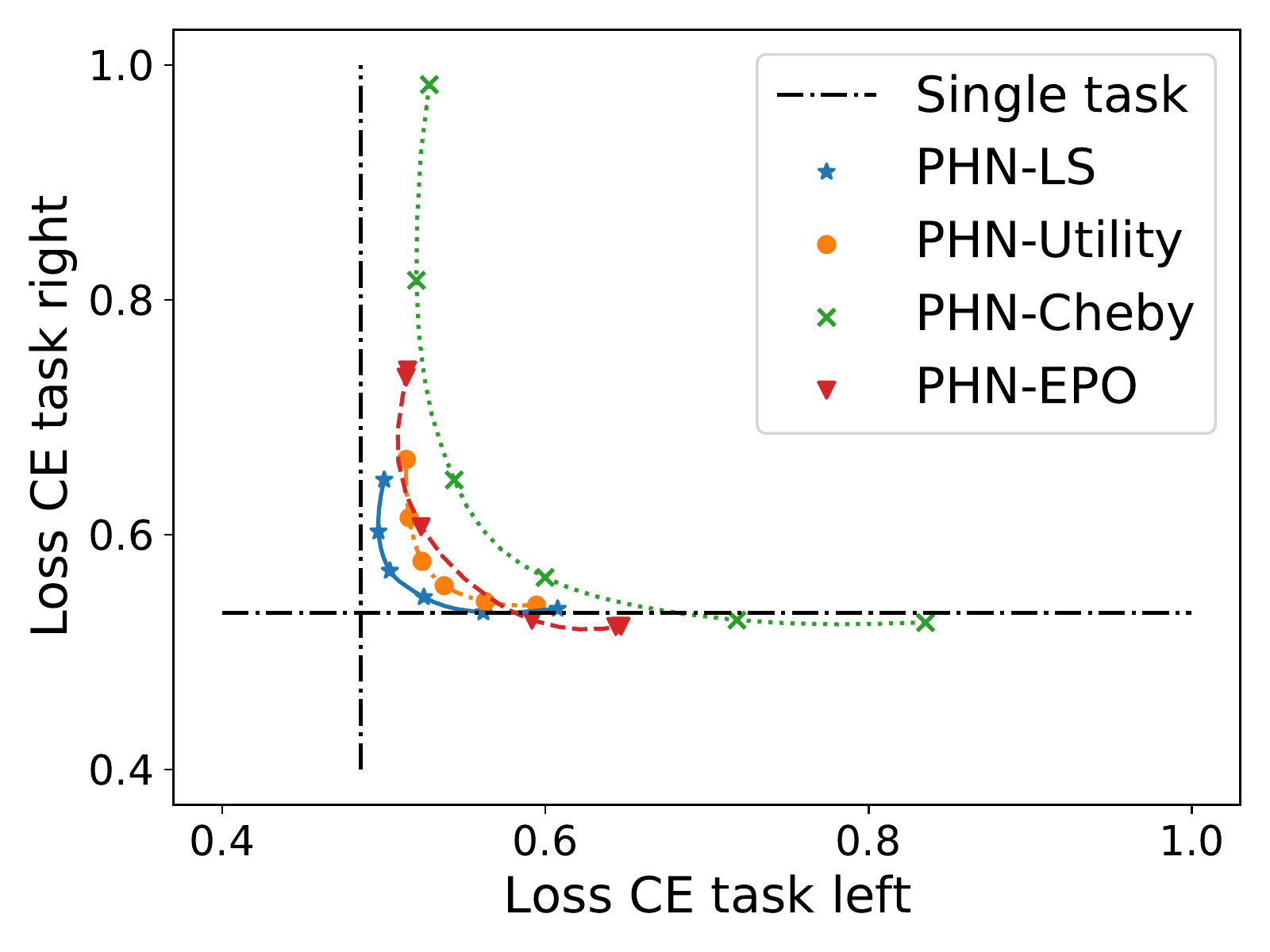}

     \end{subfigure}
              \hfill
     \begin{subfigure}[b]{0.3\textwidth}
         \centering
         \includegraphics[width=\textwidth]{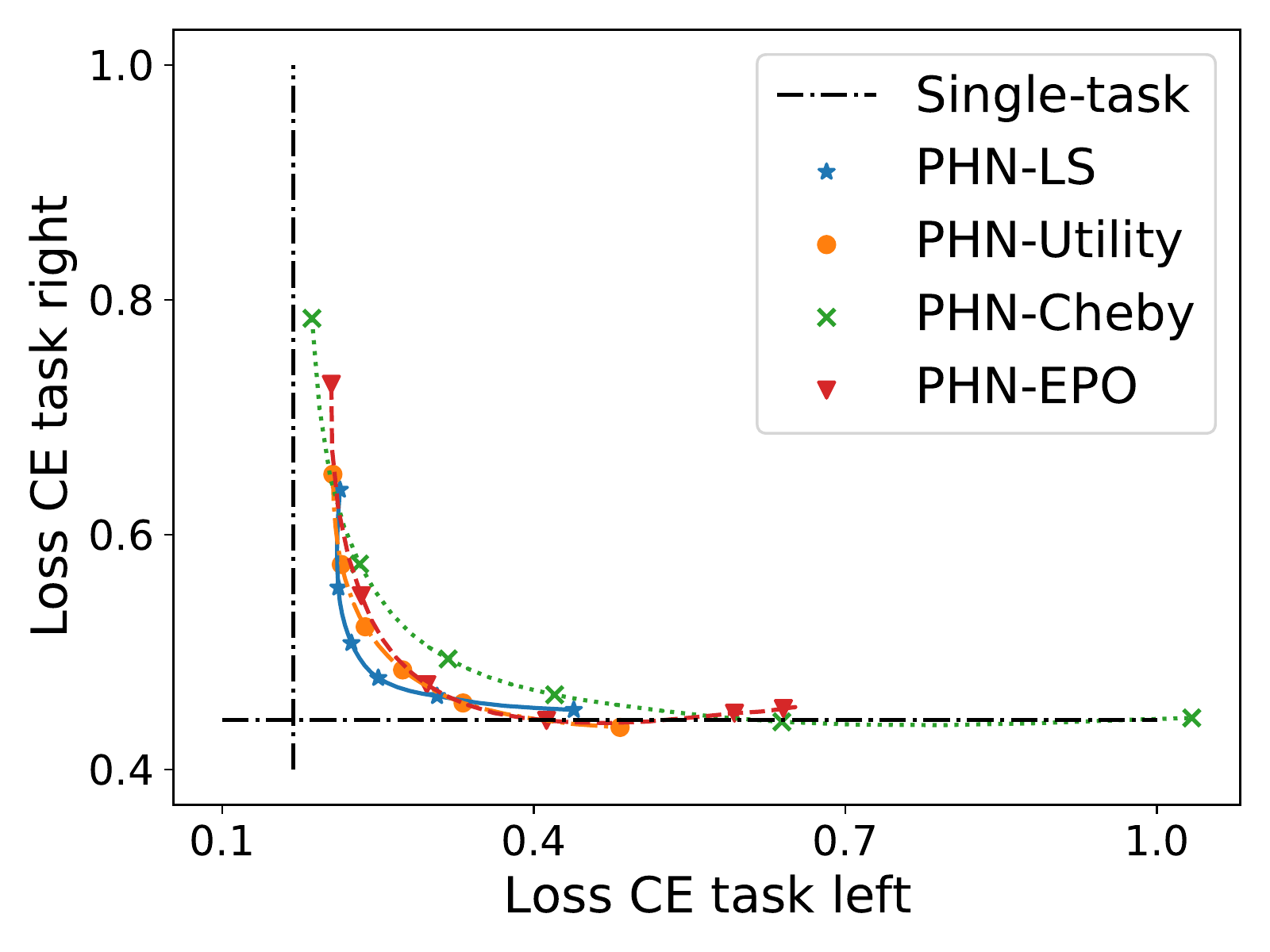}

     \end{subfigure}

     \caption{Pareto front generated by methods.}
     \label{acc}
\end{figure*}
\begin{figure*}[!ht]
    \centering
     \begin{subfigure}[b]{0.3\textwidth}
     \centering
     \includegraphics[width=\textwidth]{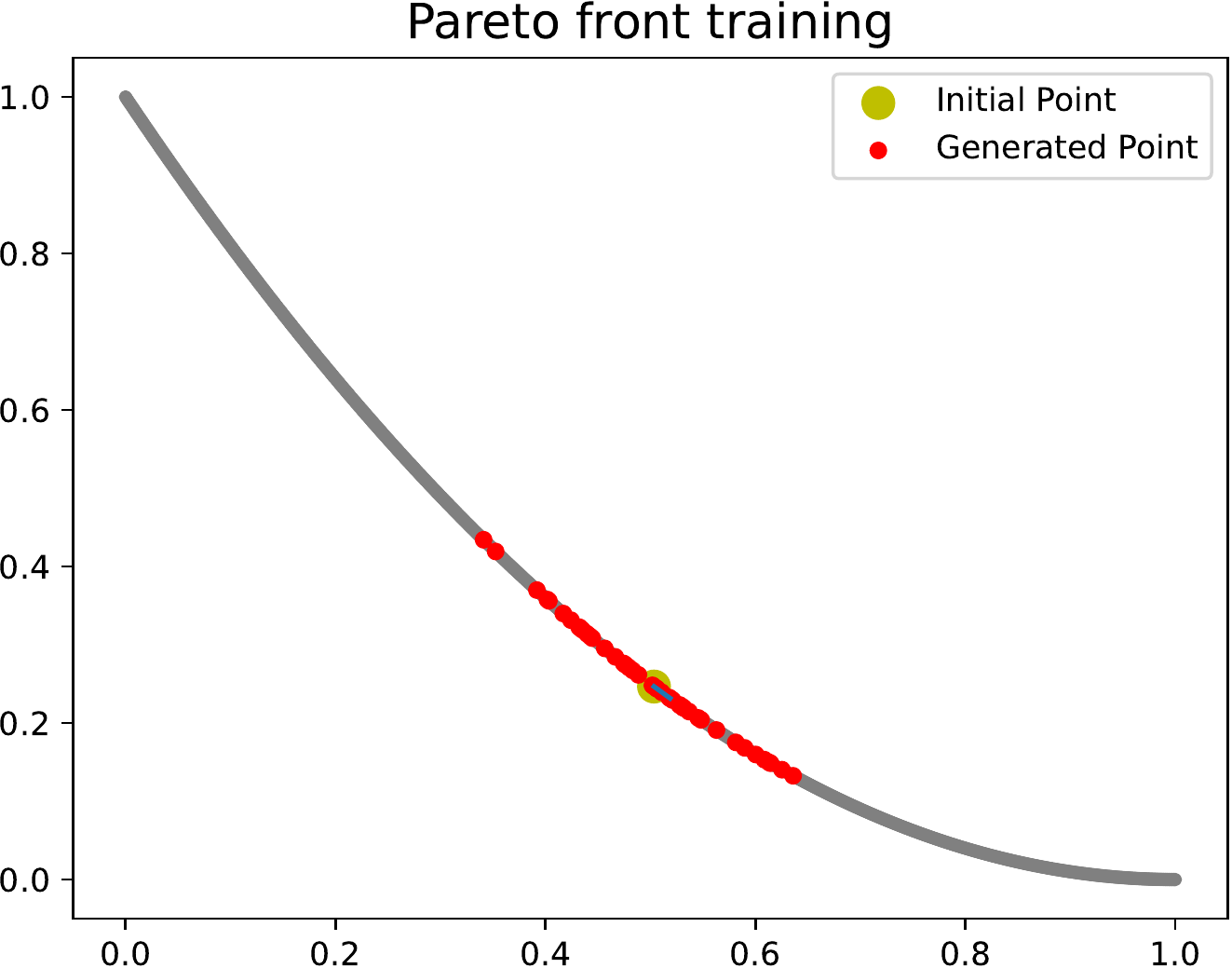}
     \caption{50 rays training.}
     \end{subfigure}
         \hfill
     \begin{subfigure}[b]{0.3\textwidth}
         \centering
         \includegraphics[width=\textwidth]{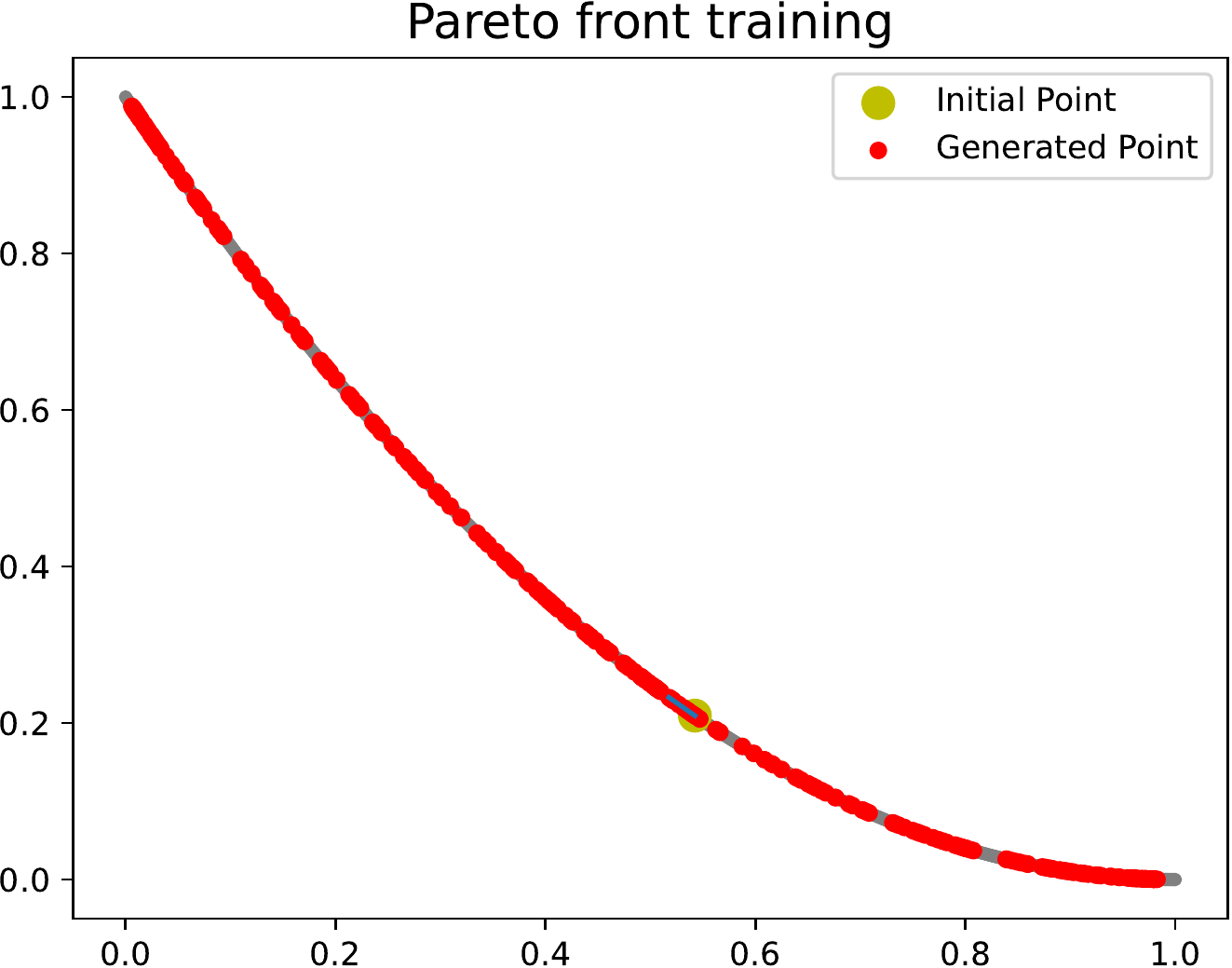}
         \caption{300 rays training.}
     \end{subfigure}
              \hfill
     \begin{subfigure}[b]{0.3\textwidth}
         \centering
         \includegraphics[width=\textwidth]{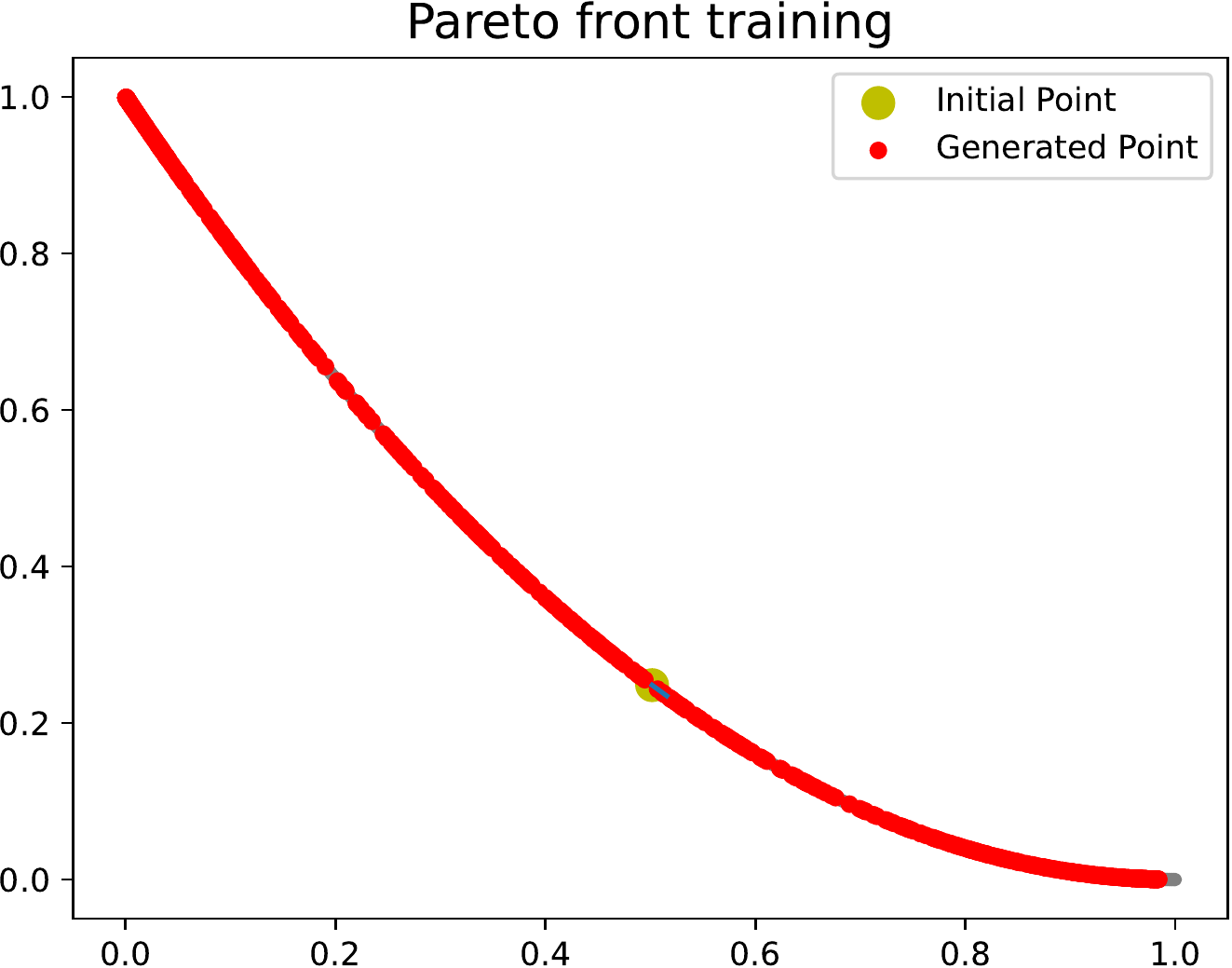}
         \caption{1000 rays training.}
     \end{subfigure}
     \caption{Utilize \eqref{LS} for learning Hypernetwork to approximate Pareto Front of Example 7.1.}
     \label{fig5}
\end{figure*}
\begin{figure*}[!htb]
    \centering
     \begin{subfigure}[b]{0.3\textwidth}
     \centering
     \includegraphics[width=\textwidth]{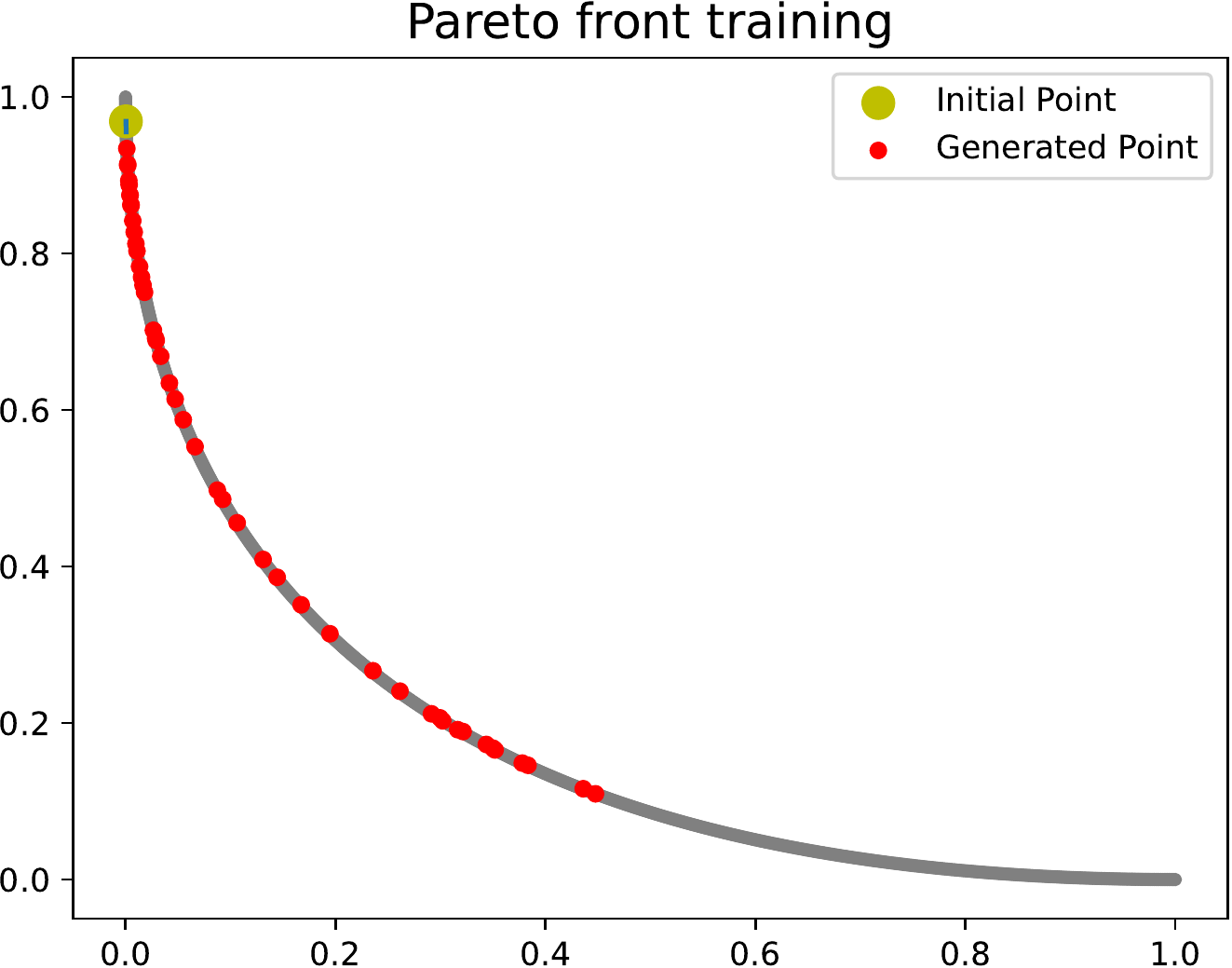}
     \caption{50 rays training.}
     \end{subfigure}
         \hfill
     \begin{subfigure}[b]{0.3\textwidth}
         \centering
         \includegraphics[width=\textwidth]{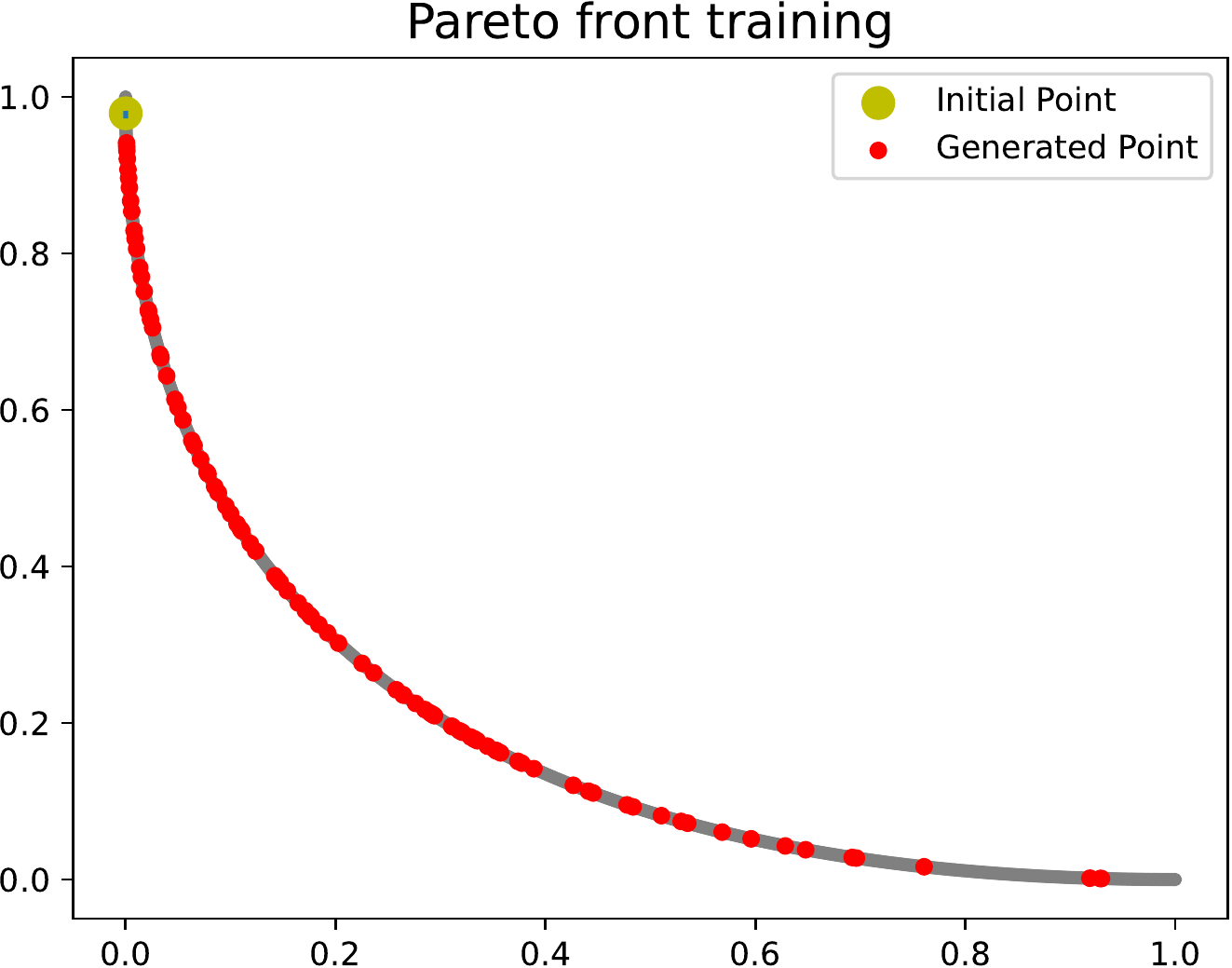}
         \caption{100 rays training.}
     \end{subfigure}
              \hfill
     \begin{subfigure}[b]{0.3\textwidth}
         \centering
         \includegraphics[width=\textwidth]{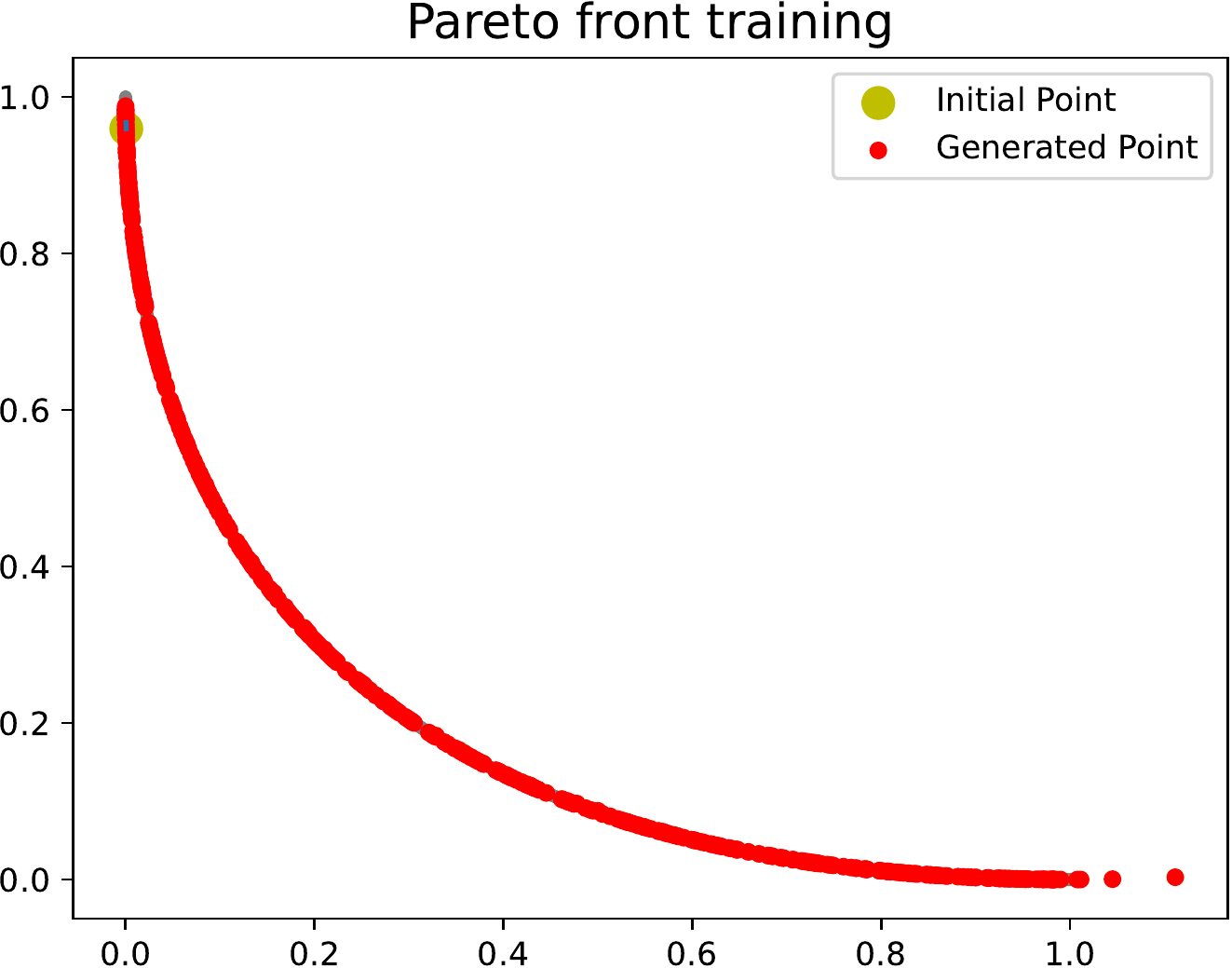}
         \caption{500 rays training.}
     \end{subfigure}
     \caption{Utilize \eqref{LS} for learning Hypernetwork to approximate Pareto Front of Example 7.2.}
     \label{fig6}
\end{figure*}
\begin{figure*}[!htb]
    \centering
     \begin{subfigure}[b]{0.3\textwidth}
     \centering
     \includegraphics[width=\textwidth]{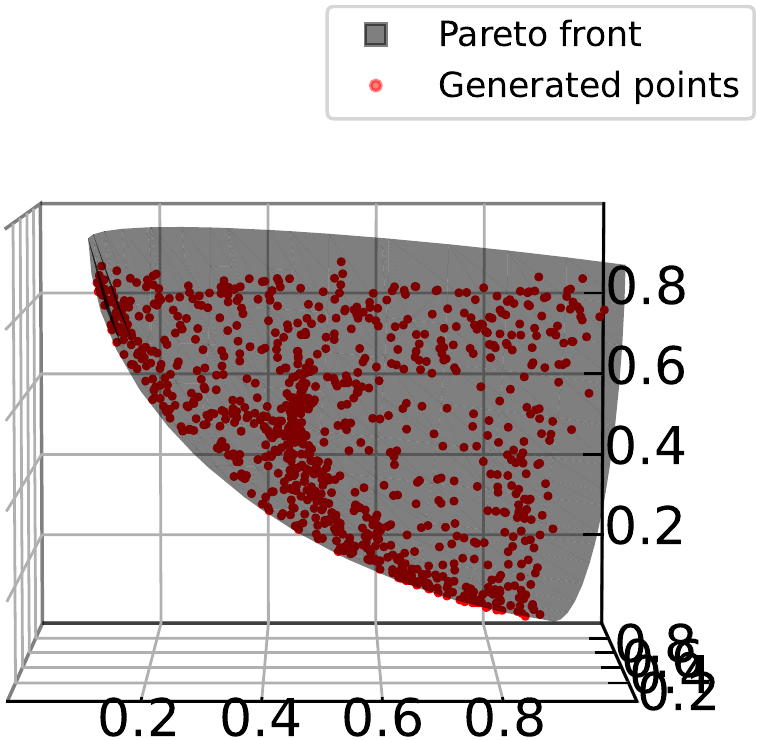}
     \caption{1000 rays training.}
     \end{subfigure}
         \hfill
     \begin{subfigure}[b]{0.3\textwidth}
         \centering
         \includegraphics[width=\textwidth]{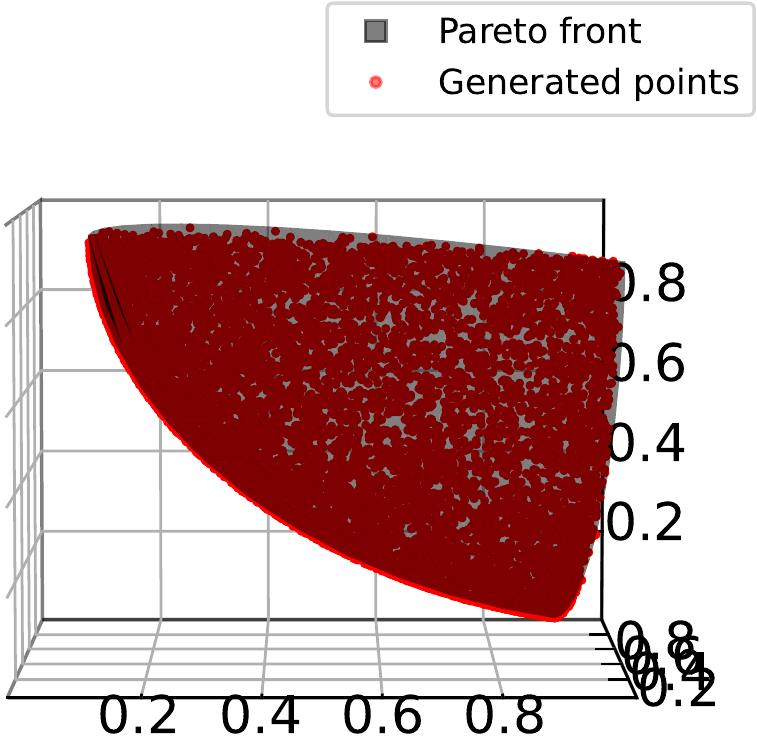}
         \caption{10000 rays training.}
     \end{subfigure}
              \hfill
     \begin{subfigure}[b]{0.3\textwidth}
         \centering
         \includegraphics[width=\textwidth]{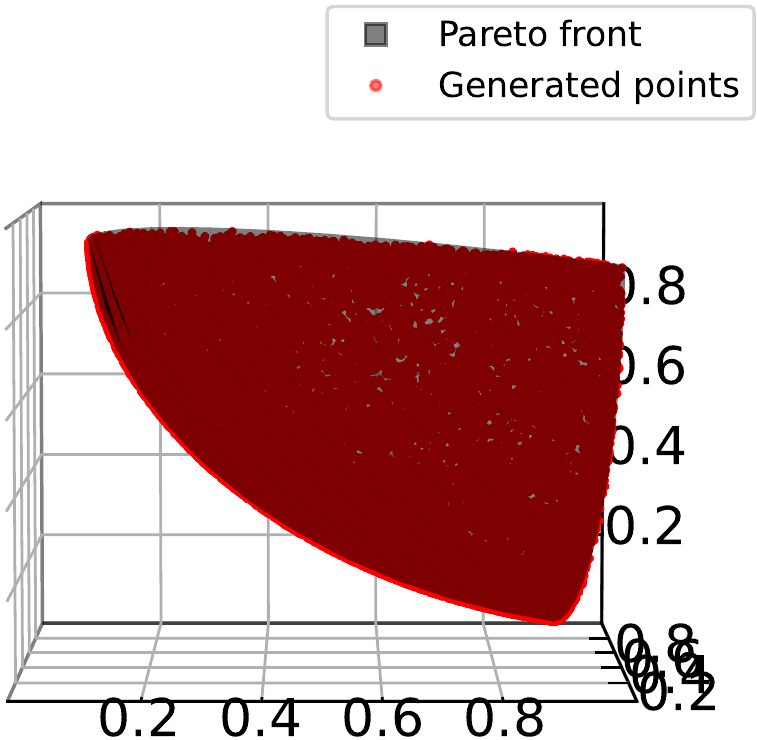}
         \caption{20000 rays training.}
     \end{subfigure}
     \caption{Utilize \eqref{Utility} for learning Hypernetwork to approximate Pareto Front of Example 7.3.}
     \label{fig7}
\end{figure*}
\begin{figure*}[!htb]
     \centering
     \includegraphics[width=0.8\textwidth]{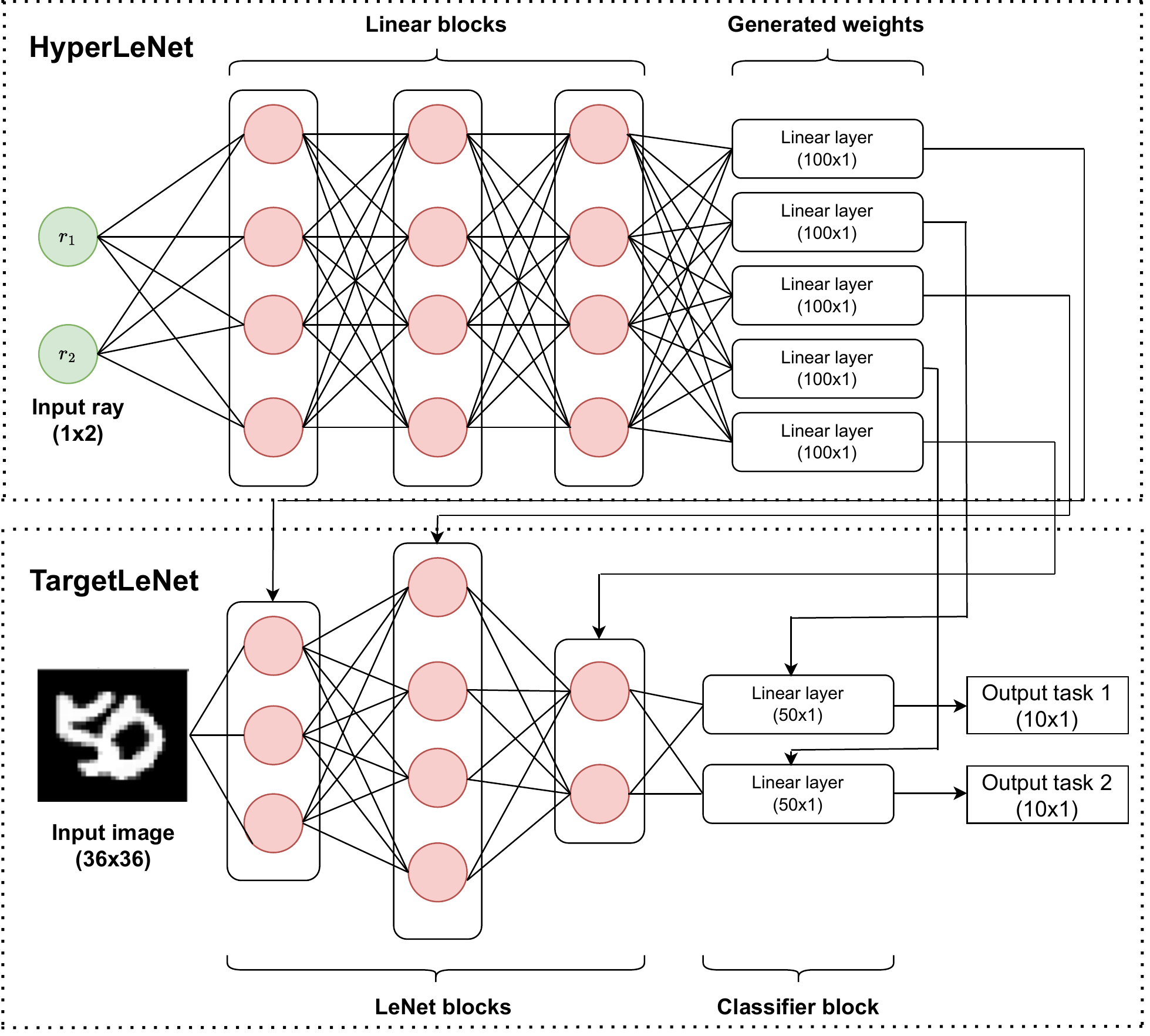}
     \caption{Multi-LeNet architecture.}
     \label{fig8}
\end{figure*}
\begin{figure*}[!htb]
     \centering
     \includegraphics[width=0.9\textwidth]{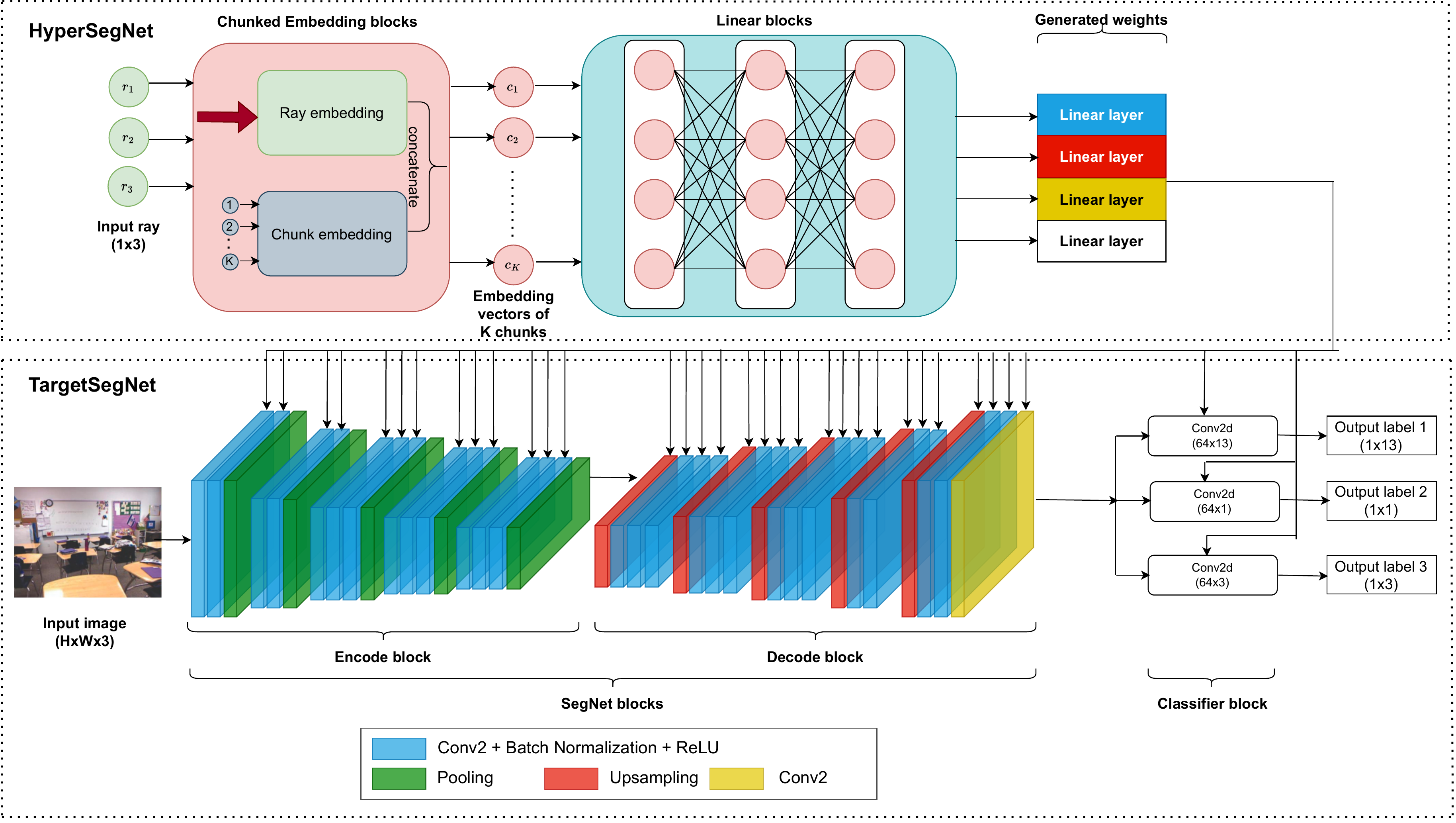}
     \caption{Multi-SegNet architecture.}
     \label{fig15}
\end{figure*}

\begin{figure*}[ht]
    \centering
    \includegraphics[width=1.3\textwidth, angle=270]{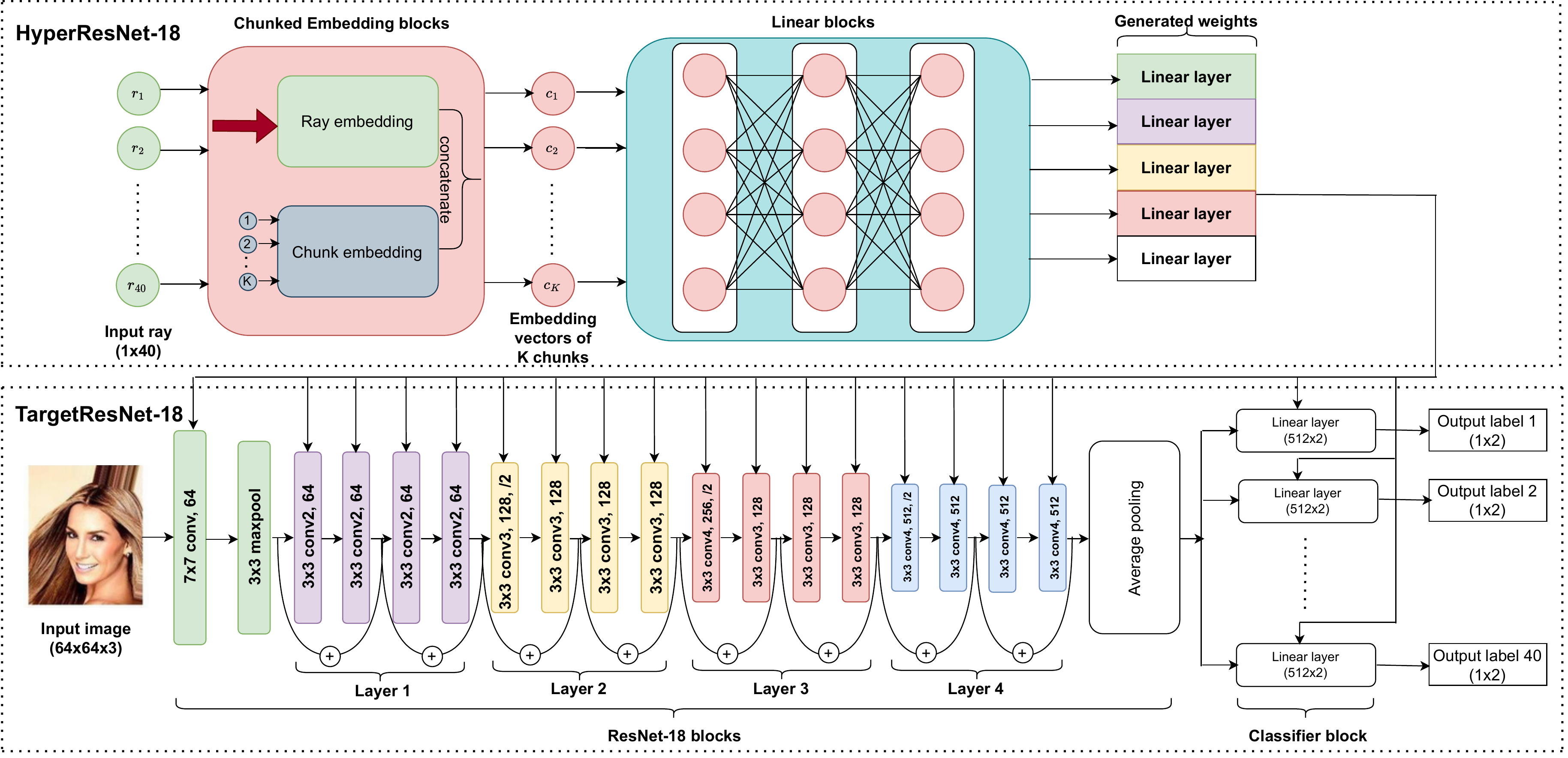}
    \caption{Multi-ResNet18 architecture.}
    \label{fig12}
\end{figure*}
\begin{figure*}[ht]
     \centering
     \includegraphics[width=0.9\textwidth]{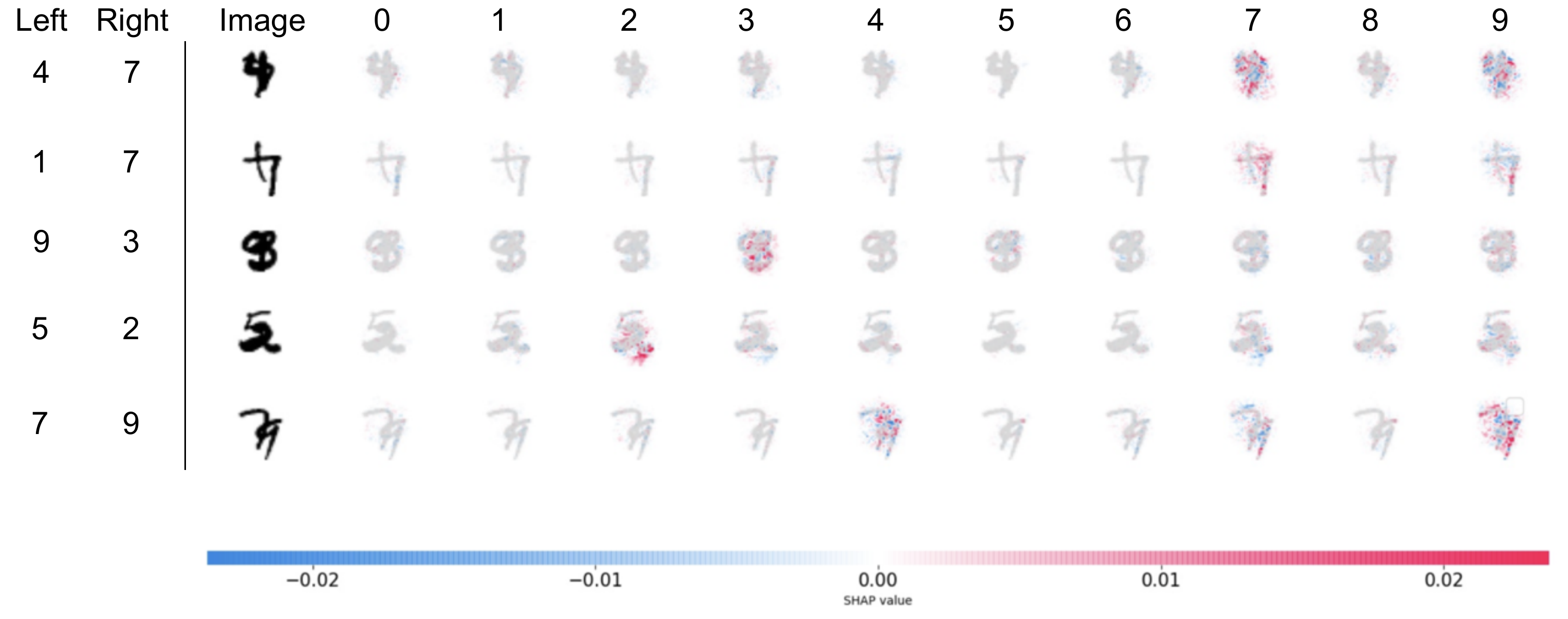}
     \caption{When the prediction of a class is higher, the pixels will be red, and when it is lower, the pixels will be blue. To demonstrate model' gradient explanation for the right digit, we choose $ray: (0.01, 0.99)$, whereby most red pixels concentrate on the right digit.}
     \label{right}
\end{figure*}
\begin{figure*}[ht]
     \centering
     \includegraphics[width=0.9\textwidth]{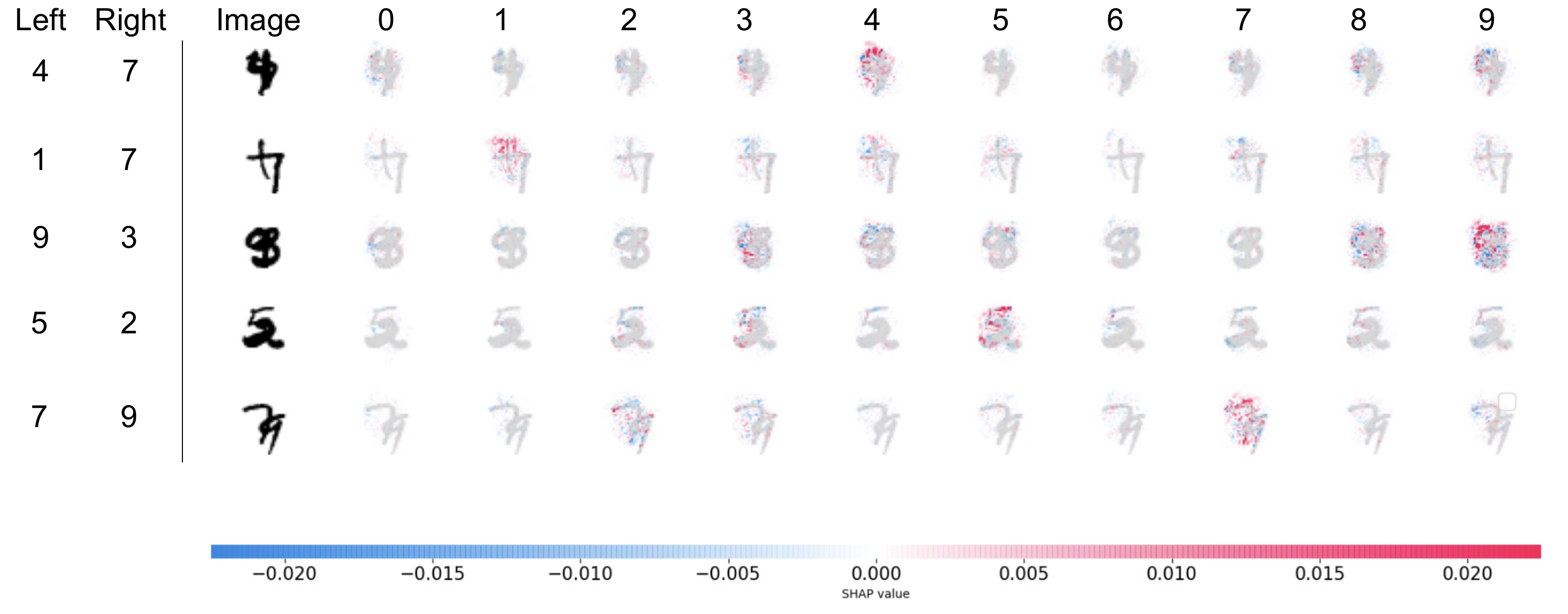}
     \caption{To the left digit, we choose $ray: (0.99, 0.01)$, then the gradient explanation of model concentrated on the left digit with the majority of the red pixels.}
     \label{left}
\end{figure*}
\begin{figure*}[ht]
     \centering
     \includegraphics[width=0.8\textwidth]{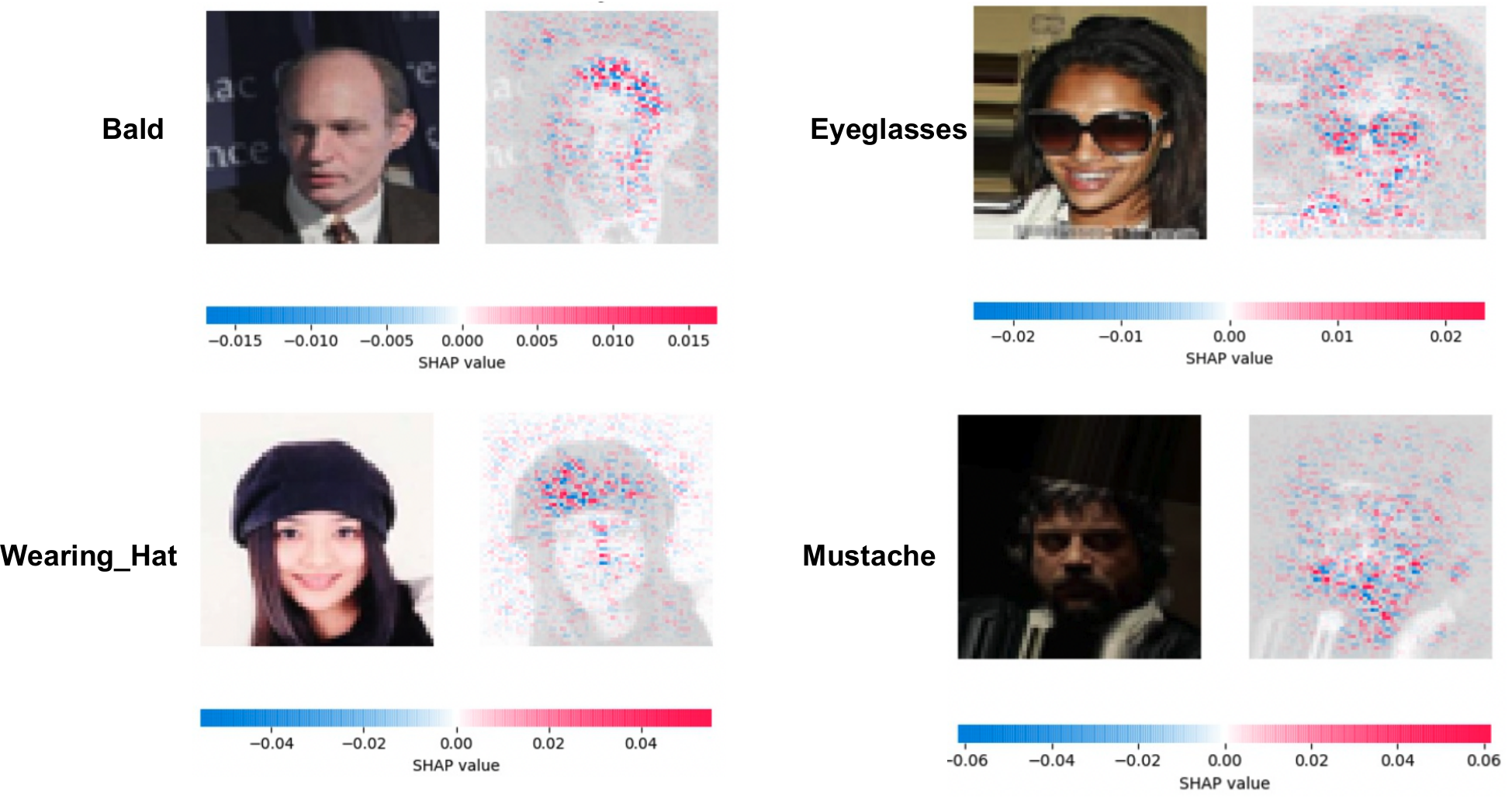}
     \caption{Higher distribution of the red points, used to estimate the model's behavior, concentrated on the attributes in 40 attributes of the CelebA dataset.}
     \label{gradient}
\end{figure*}
\begin{figure*}[ht]
    \centering
    \includegraphics[width=1.0\textwidth]{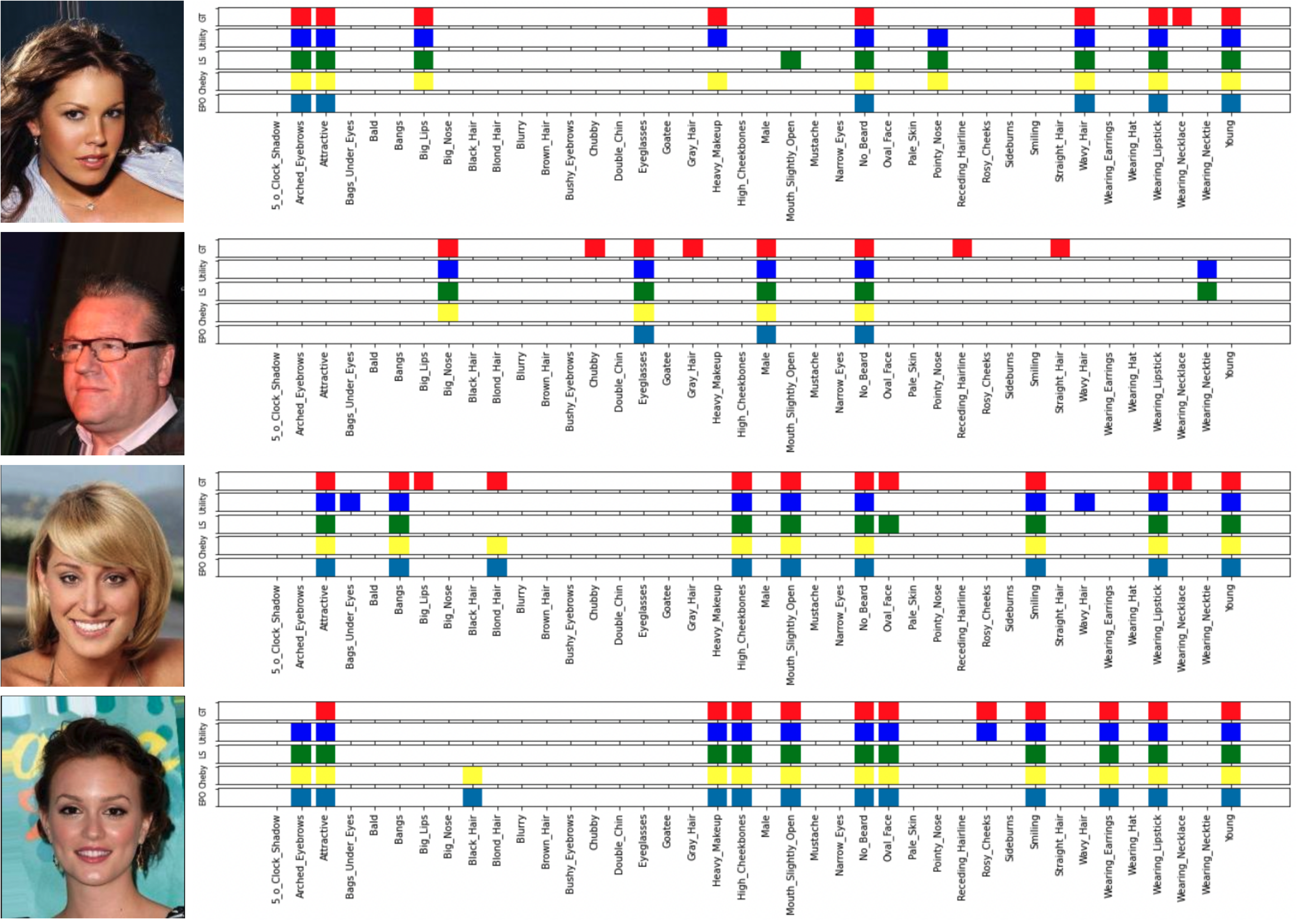}
    \caption{Prediction results of PHN-LS, PHN-Utility, PHN-Cheby, and PHN-EPO on the CelebA test dataset.}
    \label{fig14}
\end{figure*}
\newpage
\bibliographystyle{cas-model2-names}
\bibliography{cas-refs}

\end{document}